\newtheorem{newthm}{Theorem}
\newtheorem{theorem}{Theorem}[section]
\newtheorem{lemma}[theorem]{Lemma}
\newtheorem{proposition}[theorem]{Proposition}
\newtheorem{corollary}[theorem]{Corollary}
\newtheorem{definition}[theorem]{Definition}
\theoremstyle{remark}
\newtheorem{example}{\bf Example}[section]
\theoremstyle{plain}
\newtheorem{remark}[theorem]{Remark}
\numberwithin{equation}{section}
\newcommand{\wt}{\widetilde}
\def\cbar{\overline{\C}}
\def\AAA{{\cal A}}
\def\EEE{{\cal E}}
\def\FFF{{\cal F}}
\def\JJJ{{\cal J}}
\def\RRR{{\cal R}}
\def\SSS{{\cal S}}
\def\g{\gamma}
\def\G{\Gamma}
\def\x{\xi}
\def\smm{\smallsetminus}
\def\R{\mbox{$\mathbb R$}}
\def\C{\mbox{$\mathbb C$}}
\def\T{\mbox{$\mathbb T$}}
\def\D{\mathbb D}
\def\Q{\mathbb Q}
\def\Z{\mbox{$\mathbb Z$}}
\def\lv{ \left(\begin{matrix} }
 \def\rv{\end{matrix}\right)}
\def\Th{\mathbf{\Theta}}
\def\cal{\mathcal}
\def\dw{{\dw}}
\def\ds{\displaystyle}
\newcommand{\mylabel}[1]{\label{#1}}
\newcommand{\REFEQN}[1] { \begin{equation}\mylabel{#1} }
\newcommand{\ENDEQN}{\end{equation}}
\newcommand{\REFTHM}[1] { \begin{theorem}\mylabel{#1} }
\newcommand{\ENDTHM}{\end{theorem}}
\newcommand{\REFNTH}[1] { \begin{newthm}\mylabel{#1} }
\newcommand{\ENDNTH}{\end{newthm}}
\newcommand{\REFPROP}[1]{\begin{proposition}\mylabel{#1} }
\newcommand{\ENDPROP}{\end{proposition} }
\newcommand{\REFLEM}[1]{\begin{lemma}\mylabel{#1} }
\newcommand{\ENDLEM}{\end{lemma} }
\newcommand{\REFCOR}[1]{\begin{corollary}\mylabel{#1} }
\newcommand{\ENDCOR}{\end{corollary} }
\def\smm{ {\setminus}}
\def\ds{\displaystyle }
\def\pf{postcritically-finite }
\def\mystrut{{\rule[-2ex]{0ex}{4.5ex}{}}}
\def\ov{\overline}
\def\T{{\mathbb T}}
\tikzstyle{every picture}=[> = to]
\tikzset{cdlabel/.style={execute at begin node=$\scriptstyle,execute at end node=$}}
\tikzset{implication/.style={double equal sign distance, -implies}}
\tikzset{biimplication/.style={double equal sign distance, implies-implies}}
\title{On Thurston's core entropy algorithm}
\author{Yan Gao}%\footnote{The auther is supported by the grant no. 11501383 of NSFC.}}
\date{\today}
\begin{document}
\maketitle
\begin{center}
Dedicated to the memory of Tan Lei
\end{center}
\begin{abstract}
The core entropy of polynomials, recently introduced by W. Thurston, is a dynamical invariant extending topological entropy for real maps to complex polynomials, whence providing a new tool to study the parameter space of polynomials. The base is a combinatorial algorithm allowing for the computation of the core entropy given by Thurston, but without supplying a proof. In this paper, we will describe his algorithm and prove its validity.

\vspace{0.1cm}
\noindent{\bf Keywords and phrases}: core entropy, Hubbard tree, critical portrait, polynomial, complex dynamics.

\vspace{0.1cm}

\noindent{\bf AMS(2010) Subject Classification}: 37B40, 37F10, 37F20.
\end{abstract}
\section{Introduction}
\subsection{Background of the paper}

 During the last year of his life, William P. Thurston developed a theory of degree
$d$-invariant laminations , a tool that he hoped would lead to what he called a
``qualitative picture of (the dynamics of) degree $d$  polynomials''. Thurston discussed
his research on this topic in his seminar at Cornell University and was in the
process of writing an article, but he passed away before completing the
manuscript.  Several people have set out to write supplementary material to the manuscript,  based on what they learned from him throughout his seminar and email
exchanges with him. The outcome is the manuscript \cite{TG}.

The present article is part of the author's Ph.D thesis. The initial purpose, as suggested by my supervisor Tan Lei,  was to fill in the empty section ``Hausdorff dimension and growth rate''  of W. Thurston's
manuscript, as part of the supplementary material in \cite{TG}.  However, as the research developed,  the scope of the work exceeded largely what we had expected. In the end, we have decided to put
a complete treatment of the quadratic case in \cite{TG}, leaving the general case into a series of independent writings.

The present article  is the first writing on the general setting.  We will prove that Thurston's ingenuous entropy algorithm on critical portraits gives the core entropy of complex polynomials.  In forthcoming articles, we will show that there are many different combinatorial models that encode the core entropy. Most models were suggested by W. Thurston. As we have shown (in \cite{TG}) in the quadratic case,
there are a dozen of such models, including the Hausdorff dimension of various objects in various spaces, and the growth rate of various dynamical systems.

\subsection{An introduction to this article}

% A central theme of holomorphic dynamics is the understanding of the global dynamics of degree $d$ ($\geq2$) complex rational maps and degree $d$ complex polynomials. In the quadratic polynomial case, a rich variety of results were known, see for example \cite{DH, L}. But in the higher degree polynomial case ($d\geq 3$), our overall understanding has remained sketchy and unsatisfying.

 Recall that given a continuous map $f$ acting on a compact set $X$, its {topological entropy} $h(X,f)$
is a quantity that measures the complexity growth of the induced dynamical system. It is essentially defined as the growth rate of the number
of itineraries under iteration (see \cite{AKM}).

 The \emph{core entropy} of complex polynomials, to be explained below,  was introduced by W. Thurston around 2011 in order to develop a ``qualitative picture'' of the parameter space of degree $d$ polynomials. In the quadratic case, a rich variety of results about the parameter space were known, see for example \cite{DH, L}. But in the higher degree case ($d\geq 3$), our overall understanding has remained sketchy and unsatisfying.

Let $d\geq2$ be an integer, and $f$  a complex polynomial of degree $d$.
%The {\bf filled-in Julia set} $K_f$ is the set of points which do not escape to infinity under iteration, and the {\bf Julia set } $J_f$ is the boundary of $K_f$.
A point $c\in\C$ is called a \emph{critical point} of $f$ if $f'(c)=0$. The \emph{critical set} $ {\rm crit}(f)$  is defined to be $${\rm crit}(f)=\{c\in\C\mid f'(c)=0\},$$ and the \emph{postcritical set}  ${\rm post}(f)$ is defined to be
\begin{equation*}
  {\rm post}(f)=\ov{\{f^n(c) : {c\in{\rm crit}(f)}, {n\ge 1} \}}.
\end{equation*}
In many cases, there exists a $f$-forward
invariant, finite topological tree that contains ${\rm post}(f)$, called the {\it Hubbard tree} of $f$, which captures all essential dynamics of the polynomial. In particular, this tree exists if $f$ is \emph{\pf}, i.e., $\#{\rm post}(f)<\infty$ (see Section \ref{hubbard-tree}).

\begin{definition}[core entropy]
The \emph{core entropy} of $f$, denoted by $h(f)$, is defined as the topological entropy of the restriction of $f$ to its Hubbard tree, when the tree exists.
\end{definition}

From this definition, we see that the core entropy extends the topological entropy of real
polynomials to  complex polynomials, where the invariant real segment in the real polynomial case is replaced by an invariant tree, known as the Hubbard tree.
Hence, the core entropy yields a new way to study the parameter space
of complex polynomials. A fundamental tool in this direction is an effective algorithm allowing for the computation of the core entropy.

Let $f$ be a \pf polynomial of degree $d\geq2$ and let
$H_f$ denote its Hubbard tree.
The simplest way to compute the entropy of $f:H_f\to H_f$ is to write the {\it incidence matrix} for the {\it Markov map} $f$
acting on $H_f$, and take the logarithm of its leading eigenvalue (see Section \ref{entropy-result}). However, this method
requires knowledge of the topology of $H_f$, and is thus difficult to  realize on a computer.

To avoid knowing the topology of the Hubbard tree and the action of $f$ on it, W. Thurston developed a purely combinatorially algorithm (without supplying a proof) using the combinatorial data {\it critical portraits}. The concept of  critical portraits  and this  entropy algorithm will be exhaustively explained in Sections \ref{section-critical-portrait} and  \ref{algorithm2} respectively. Roughly speaking, a \pf polynomial $f$ induces a finite collection
of finite subsets of the unit circle
\[\Th:=\mathbf{\Theta}_f=\{\Theta_1,\ldots, \Theta_s\},\]
called a \emph{weak critical marking} of $f$ (see Definition \ref{weak-critical-marking}). The algorithm takes $\mathbf{\Theta}$ as
 input, constructs a non-negative matrix $A$ (bypassing $f$ and $H_f$), and provides as output its Perron-Frobenius leading eigenvalue $ \rho(\mathbf{\Theta})$. Using this algorithm, Thurston draw a picture of the core entropy of quadratic polynomials as a function of the external angle $\theta$ (see Figure \ref{Thurston-plot}).
 \begin{figure}[htbp]\centering
\includegraphics[width=11cm]{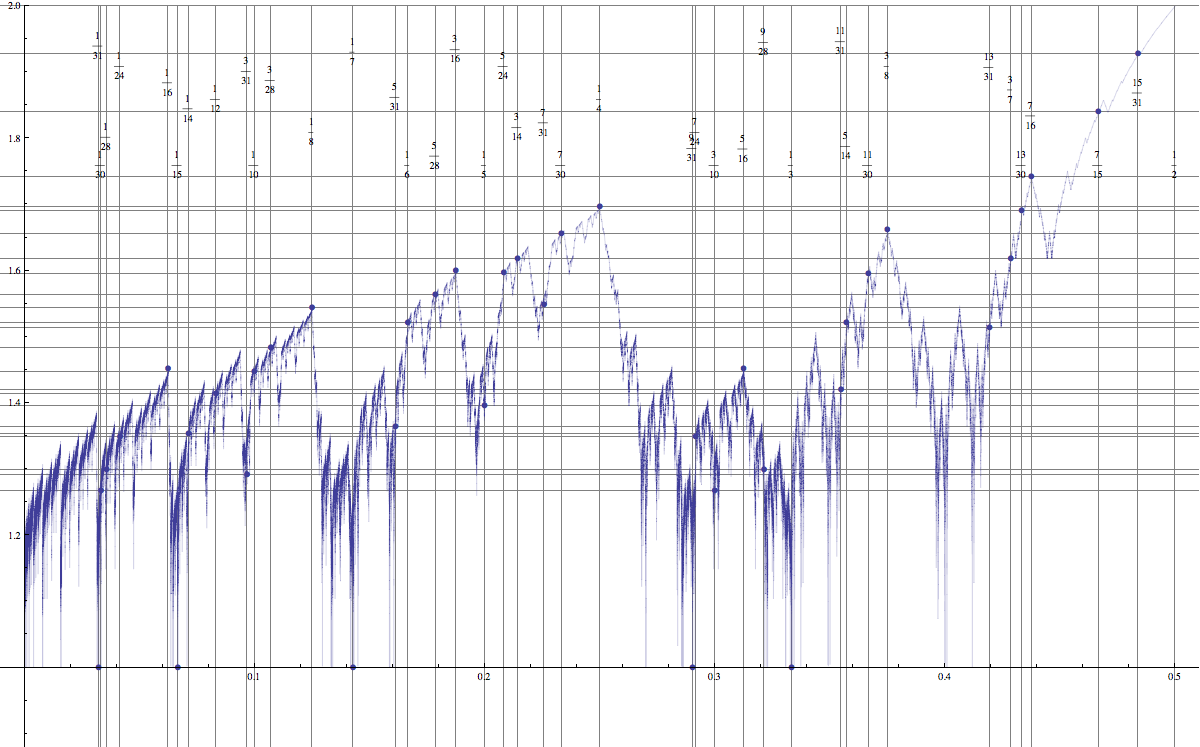}
\caption{The core entropy of quadratic polynomials, drawn by Thurston}\label{Thurston-plot}
\end{figure}

The validity of Thurston's algorithm in the quadratic case was proven by Y. Gao-L. Tan (\cite{TG}) and W. Jung (\cite{Jung}). Based on this algorithm, Tiozzo proved the {\bf continuity conjecture} of Thurston \cite{Ti} (Dudko and Schleicher \cite{DS} give an alternative proof of the conjecture without using this algorithm): Given $\theta\in \Q/\Z$, the parameter ray of angle $\theta$ determines a unique \pf quadratic polynomial $f_{c_\theta}=z^2+c_\theta$. Let $h(\theta)$ denote the core entropy of $f_{c_\theta}$.

{\noindent\bf Theorem} (Thurston, Dudko-Schleicher, Tiozzo).
{\it The entropy function $h:\Q/\Z\to\R$ extends to a continuous function $\R/\Z\to \R$.}

%Consequently, by looking at Thurston's entropy graph (Figure \ref{Thurston-plot}), one can recover the topological model of the Mandelbrot set (at least for non-renormalizable angles) by taking inscribed horizontal segments.

%A sequence of works, such as Dudko-Schleicher \cite{DS}, T. Li \cite{Li}, W. Jung \cite{Jung},  W. Thurston et al \cite{TG} and G. Tiozzo \cite{Ti0,Ti}, show that the core entropy of quadratic polynomials reflects the combinatorics  and local geometry of the Mandelbrot set.
Analogously, in order to clarify
the connectedness locus (the generalization of the Mandelbrot set) for degree $d$ ($d\geq3$) polynomials from the point of view of the core entropy, for example the continuity conjecture in the higher degree case, one should first verify the validity of Thurston's entropy algorithm in the general case. The purpose of this paper is to prove this point. We establish the following main theorem. This is the first step towards Thurston's program.
\begin{theorem}\label{Thurston-algorithm}
Let $f$ be a \pf polynomial, and $\mathbf{\Theta}$  a weak critical marking of $f$. Let $\rho(\Th)$ be the output of Thurston's entropy algorithm. Then $\log\rho(\mathbf{\Theta})$ equals the core entropy of $f$, i.e., $\log\rho(\mathbf{\Theta})=h(H_f,f)$.
\end{theorem}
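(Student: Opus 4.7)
The plan is to compare Thurston's combinatorial matrix $A=A(\mathbf{\Theta})$ with the Markov incidence matrix $M=M(H_f,f)$ of the Hubbard tree action, and show their Perron--Frobenius eigenvalues coincide. Since $h(H_f,f)=\log\rho(M)$ by the standard theorem for piecewise-monotone tree maps (this is what Section~\ref{entropy-result} supplies), the theorem reduces to the identity $\rho(A)=\rho(M)$. The bridge between the two sides is the set of external angles landing at the vertices of $H_f$: the finite set $\mathrm{post}(f)$ (and the branch points) carries well-defined rational angles, and each edge $e\subset H_f$ is then coded by the pair of angles separating its two endpoints from the rest of the tree. Thurston's algorithm runs a purely combinatorial dynamical system on such angle pairs (via multiplication by $d$), and the main task is to show that this dynamics reproduces, up to harmless combinatorial redundancy, the Markov dynamics of $f$ on edges.

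Concretely, I would first recall how $\mathbf{\Theta}$ determines a lamination $L(\mathbf{\Theta})$ in the closed unit disk whose leaves land in pairs at critical points of $f$; then define a correspondence $\Phi:E(H_f)\to \Sigma(\mathbf{\Theta})$ sending each edge $e$ with endpoints $x,y\in H_f$ to the unordered pair $(\theta_x,\theta_y)$ of external angles whose chord ``separates'' $e$ from the rest of the tree in the filled Julia set. Because the landing map intertwines $f$ with multiplication by $d$, and because the combinatorial arc associated to $f(e)$ is precisely the $d$-multiple of the arc associated to $e$ (possibly decomposed into sub-arcs when the arc wraps over critical leaves of $\mathbf{\Theta}$), the incidence relations in $M$ and $A$ match edge-by-edge: an entry $M_{e,e'}$ counting how many times $f(e)$ covers $e'$ equals the corresponding combinatorial count in $A_{\Phi(e),\Phi(e')}$. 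The image of $\Phi$ is an $A$-invariant subset of $\Sigma(\mathbf{\Theta})$ on which $A$ restricts to the matrix $M$, so $\rho(M)\le\rho(A)$ is immediate. The reverse inequality requires showing that the ``extra'' combinatorial arcs in $\Sigma(\mathbf{\Theta})\setminus\mathrm{Im}(\Phi)$ either lie in a block whose spectral radius is at most $\rho(M)$, or eventually map into $\mathrm{Im}(\Phi)$ under the combinatorial dynamics.

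The hard part will be Step~2, the precise matching of the combinatorial arcs with the tree edges in the presence of two complications: first, a weak critical marking $\mathbf{\Theta}$ is generally not unique (different marking choices at periodic Fatou critical points yield different matrices, all of which must give the same $\rho$); second, edges of $H_f$ containing critical points in their interior break under $f$ into several sub-arcs, and on the combinatorial side this is reflected only indirectly, via the arcs being cut by leaves of $L(\mathbf{\Theta})$ when they are iterated forward. I expect to handle the first issue by proving that modifying the choice of angles for a single periodic critical Fatou point amounts to a conjugation (or block-triangular extension) of $A$ that does not change the leading eigenvalue; and the second by a careful bookkeeping showing that the natural subdivision of a tree edge into pieces between preimages of critical points is mirrored exactly by the chopping of the corresponding circular arc by the critical leaves of $L(\mathbf{\Theta})$. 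Once this dictionary is in place, the spectral identity $\rho(A)=\rho(M)$ follows from standard Perron--Frobenius arguments applied to the invariant sub-matrix $\Phi(M)\subset A$.
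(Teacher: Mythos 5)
Your overall shape---reduce to $h(H_f,f)=\log\rho(M)$ via the Markov-map entropy formula, then match $M$ against Thurston's matrix $A$ by a dictionary between tree arcs and angle pairs, and control the leftover combinatorial states by a block with spectral radius $\le\rho(M)$---is the right one, and it is the strategy the paper follows at the top level. But the step you treat as routine (``the incidence relations in $M$ and $A$ match edge-by-edge'') is exactly where the degree~$\ge 3$ case breaks, and your proposal does not contain the idea needed to repair it. The dictionary $\Phi$ from edges of $H_f$ to angle pairs is not well-defined, for two concrete reasons. First, when a postcritical point is the center of a Fatou component, the marked external angles land on the \emph{boundary} of that component, not at the center; the regulated arc in $H_f$ between two postcritical points and the ``combinatorial arc'' between their angles therefore differ by segments of internal rays, which carry their own (eventually periodic) dynamics. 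Second, in degree $\ge 3$ a single external ray can simultaneously support two critical Fatou components, and a single angle can belong both to a marking set $\Theta_j(c_j)$ of a Julia critical point and to a marking set $\Theta(U)$ of a Fatou component (see Example \ref{no-well}), so the assignment angle $\mapsto$ postcritical point is genuinely multivalued. Consequently $M$ is \emph{not} a principal submatrix of $A$ on an $A$-invariant subset, and the inequality $\rho(M)\le\rho(A)$ you call ``immediate'' has no proof as stated; likewise your reverse inequality has no mechanism, because the ``extra'' states are not merely redundant angle pairs but reflect the collapsed internal-ray dynamics.

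The paper's resolution is to interpose two quotients: collapse all critical and postcritical internal rays via Moore's theorem to get a tree $T=\phi(H_f)$ and a Markov map $g$ with $h(T,g)=h(H_f,f)$ (the collapsed edges form an upper-triangular block that is a union of permutation cycles, hence has spectral radius $1$ --- Lemma \ref{f-invariant} and Proposition \ref{equal}); and symmetrically collapse the degenerate subdivision arcs of the complete graph $G$ to get $\Gamma$ with $h(\Gamma,Q)=h(G,L)$ by the same block argument (Lemma \ref{eventually-period}, Proposition \ref{equal-2}). Only after both collapsings does one obtain a finite-to-one surjective semi-conjugacy $\Psi:\Gamma\to T$ to which Proposition \ref{Do4} applies. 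Your plan would need, at minimum: (i) a replacement for the well-definedness of $\Phi$ that accounts for the internal-ray segments (some version of the quotient $T$), (ii) a proof that the leftover block has spectral radius exactly controlled (in the paper it is $1$, not merely $\le\rho(M)$, and proving even that requires knowing the collapsed edges are attracted to cycles), and (iii) the injectivity/monotonicity analysis of $f$ on the closures of the unlinked equivalence classes (Lemmas \ref{injective-f} and \ref{injective-g}), without which the claim that subdivision by the separation set mirrors the actual covering multiplicities of $f(e)$ over $e'$ is unsupported. Your remark about independence of the marking choice is not needed for the theorem as stated (the marking is fixed in the hypothesis), so that effort is misdirected.
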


The organization of the manuscript is as follows.  In Sections 2 and 3, we recall some preliminary definitions and results about  topological entropy and Hubbard trees that will be used below. In Section 4, we give a detailed description of the weak critical markings of \pf polynomials and their induced partitions in the dynamical plane. We then introduce  Thurston's entropy algorithm in Section 5, and prove the main theorem (Theorem \ref{Thurston-algorithm}) in Section 6.

{\noindent \bf Acknowledgement.} I wish to express my sincere appreciation to L. Tan for leading  me to this interesting topic, and offering constant encourage and very useful suggestions during the writing of the paper. Without her help, the paper would not appear.  I also would like to thank R. H. Henrik for the comments on a preliminary version and J. S. Zeng for the very useful discussion and suggestions. Some pictures are provided by J. S. Zeng. The author is partially supported by  NSFC grant no. 11501383.

\section{Basic results about topological entropy}\label{entropy-result}

% Given a compact space $X$ and a continuous map $f$ acting on $X$, its {\it topological entropy} $h(X,f)$
%is a quantity essentially defined as the growth rate of the number of itineraries under iteration (see \cite{AKM}).

We will not use the general definition of the topological entropy in the paper (see \cite{AKM}). Instead, we summarize some basic results about the topological entropy that will be applied below.

Let $f:X\to X$ be a continuous map. We denote by $h(X,f)$ the topological entropy of $f$ on $X$.
The following three propositions can be found in \cite{Do}.

\REFPROP{Do2}
If $X=X_1\cup X_2$, with $X_1$ and $X_2$ compact, $f(X_1)\subset X_1$ and $f(X_2)\subset X_2$, then $h(X,f)=\sup\bigl(h(X_1,f),h(X_2,f)\bigr)$. \ENDPROP

\REFPROP{Do3}
Let $Z$ be a closed subset of $X$ such that $f(Z)\subset Z$. Suppose that for any $x\in X$, the distance of $f^n(x)$ to $Z$ tends to $0$, uniformly on any compact set in $X-Z$. Then $h(X,f)=h(Z,f)$. \ENDPROP

\REFPROP{Do4} Assume that $\pi$ is a  surjective semi-conjugacy $$\begin{array}{rcl}Y &\xrightarrow[]{\ q\ }  &Y\\ \pi\Big\downarrow &&\Big\downarrow \pi \vspace{-0.1cm} \\ X &  \xrightarrow[]{\ f\ } & X\vspace{-0.1cm}.\end{array}$$
Then $h(X,f)\leq h(Y,q)$. Furthermore, if  $\ds \sup_{x\in X}\#\pi^{-1}(x)< \infty$ then  $ h(X,f)=h(Y,q)$.\ENDPROP

In this paper, we mainly use the topological entropy for real maps of dimension one.

A (finite topological) \emph{graph} $G$ is a compact  Hausdorff space which contains a
finite non-empty set $V_G$ (the set of vertices), such that every connected component
of $G\setminus V_G$ is homeomorphic to an open interval of the real line.
Since any graph can
be embedded in $\R^3$ (see \cite{Moi}), we will consider each graph endowed with the
topology induced by the topology of $\R^3$.

\begin{definition}[monotone map]\label{monotone}
Let $X,Y$ be topological spaces and $\phi:X\to Y$ be continuous. Then $\phi$ is said to be \emph{monotone} if  $\phi^{-1}(y)$ is  connected for every $y\in Y$.
\end{definition}

The following fact will be repeatedly used in the paper. Refer to e.g.\ \cite[A.13]{BM} for a proof.

\begin{proposition}\label{tree-map}
Let $X$ be a topological space, and $\phi:[0,1]\to X$  a monotone map. Then the image $\phi([0,1])$ is either a point or an \emph{arc}, i.e., a homeomorphic image of a closed interval of the real line.
\end{proposition}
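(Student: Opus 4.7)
My plan is to realize $\phi([0,1])$ as a quotient of $[0,1]$ obtained by collapsing each fiber to a point, and then invoke the classical characterization of $[0,1]$ among compact, connected, linearly ordered topological spaces.

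First, I would write $Y := \phi([0,1])$ with the subspace topology from $X$; since the conclusion presupposes $Y$ to be either a point or Hausdorff, I may assume $Y$ is Hausdorff. By monotonicity, each fiber $F_y := \phi^{-1}(y)$ is a nonempty connected closed subset of $[0,1]$, hence a closed subinterval $[a_y, b_y] \subseteq [0,1]$. Introduce the equivalence relation $s \sim t \iff \phi(s) = \phi(t)$ and form the quotient $Q := [0,1]/\sim$, with quotient map $\pi$. Then $\phi$ factors as $\phi = \bar\phi \circ \pi$, where $\bar\phi \colon Q \to Y$ is a continuous bijection. Since $Q$ is compact (image of $[0,1]$) and $Y$ is Hausdorff, $\bar\phi$ is automatically a homeomorphism, so it suffices to show that $Q$ is either a point or homeomorphic to $[0,1]$.

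Next, I would equip $Q$ with a linear order: because the closed intervals $\{[a_y, b_y]\}$ are pairwise disjoint, they are totally ordered by their position on $[0,1]$, and this descends to a total order $\leq$ on $Q$. The crucial verification is that the quotient topology on $Q$ agrees with the order topology. For each $s \in [0,1]$, the image $\pi([0,s])$ is the order down-set of $\pi(s)$ and, being the continuous image of a compact set, is closed in $Q$; similarly for $\pi([s,1])$. This shows that every closed order half-line is closed in the quotient topology, so the order topology is contained in the quotient topology. The reverse inclusion uses that $\pi$ is order-preserving and that its fibers are order-convex: any open saturated set in $[0,1]$ is a union of open order-convex sets, which push down to unions of open order-intervals in $Q$.

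Finally, $Q$ is a compact, connected, linearly ordered topological space, and it is separable, since $\pi(\Q \cap [0,1])$ is dense. By a standard theorem of general topology, any compact connected separable linearly ordered topological space with more than one point is order-isomorphic, hence homeomorphic, to $[0,1]$; otherwise $Q$ is a single point. The main obstacle I anticipate is the verification that the order topology and the quotient topology on $Q$ coincide — the rest reduces to well-known facts, and the role of monotonicity appears precisely here, in guaranteeing order-convexity of the fibers.
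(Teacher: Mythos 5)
The paper does not actually prove this proposition: it is stated with only a pointer to \cite[A.13]{BM}, so there is no internal argument to measure yours against. Your quotient--plus--linear-order argument is a correct, self-contained route to the fact. Two remarks. First, you are right that some separation hypothesis is indispensable: for a bare topological space $X$ the statement is literally false (send $[0,1]$ onto a two-point indiscrete space by splitting at $1/2$; both fibers are connected, yet the image is neither a point nor an arc), so "I may assume $Y$ is Hausdorff" is really the addition of a missing hypothesis rather than a harmless reduction --- but it is exactly the hypothesis in force everywhere the paper uses the proposition, since every target there sits inside $\C$ or $\R^3$. Second, the one step of your outline that genuinely needs to be written out is the inclusion of the quotient topology in the order topology. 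The point is not just that components of a saturated open set $U\subset[0,1]$ are order-convex, but that each component $C=(a,b)$ is itself saturated (a fiber is connected, hence lies in a single component), that its endpoints lie outside $U$ so their fibers are contained in $[0,a]$ and $[b,1]$ respectively, and that consequently $\pi(C)$ equals the open order-interval $(\pi(a),\pi(b))$ (with the obvious modification when $C$ contains $0$ or $1$), not merely an order-convex set. With that verified, $Q$ is a separable, compact, connected linearly ordered space, and the classical characterization of $[0,1]$ finishes the proof as you say. The argument is sound.
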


\begin{definition}[Markov graph map]\label{def:markov}
Let $G$ be a finite graph with vertex set $V_G$. A continuous map $f:G\to G$ is called \emph{Markov} if there is a finite subset $A$ of $G$ containing $V_G$ such that $f(A)\subset V_G$ and $f$ is monotone on each component of $G\setminus A$.
\end{definition}

Let $f:G\to G$ be a Markov graph map. By the definition,  an edge of $G$ is mapped either to a vertex of $G$ or the union of several edges of $G$.
Enumerate the edges of  $G$  by $e_i$ , $i=1,\cdots,k$. We then obtain an \emph{incidence matrix} $D_{(G,f)}=(a_{ij})_{k\times k}$ of $(f,X)$
 such that  $a_{ij}=\ell$ if $f(e_i)$ covers $e_j$ precisely $\ell$ times. Note that choosing  different enumerations of the edges gives rise to conjugate incidence matrices, so in particular, the eigenvalues are independent of the choices.

 Denote by $\rho$ the largest non-negative eigenvalue of $D_{(G,f)}$. By the Perron-Frobenius theorem  such an eigenvalue exists
  and equals
the growth rate of $\|D_{(G,f)}^n\|$ for any matrix norm.

The following result is classical (see \cite{AM,MS}):
\begin{proposition}\label{entropy-formula}
The topological entropy $h(G,f)$ is equal to $0$ if $D_{(G,f)}$ is nilpotent, i.e., all eigenvalues of $D_{(G,f)}$ are zero; and equal to $\log\rho$ otherwise.
\end{proposition}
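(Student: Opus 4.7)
The plan is to realize $(G,f)$ as a (factor of a) one-sided subshift of finite type whose transition matrix is $D_{(G,f)}$, and then transfer the entropy via Proposition \ref{Do4}. I separate the two cases according to whether $D=D_{(G,f)}$ is nilpotent or not.

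\textbf{Nilpotent case.} Suppose $D^N=0$. By the defining property of $D$ and an induction on $n$, the entry $(D^n)_{ij}$ equals the number of sequences $e_{i_0},\dots,e_{i_n}$ with $e_{i_0}=e_i$, $e_{i_n}=e_j$ such that $f(e_{i_k})$ covers $e_{i_{k+1}}$. Hence $D^N=0$ means that for every edge $e_i$ the set $f^N(e_i)$ contains no entire edge of $G$. Combined with the Markov hypothesis (which makes $f^N$ monotone on each component of $G\setminus f^{-(N-1)}(A)$ and sends the finite refinement into $V_G$), this forces $f^N(e_i)$ to be a single point of $V_G$. Thus $Z:=f^N(G)$ is a finite $f$-invariant set; in particular $f(Z)\subset Z$ and every orbit in $G$ lands in $Z$ after at most $N$ steps. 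Proposition \ref{Do3} (with this $Z$, which trivially satisfies the uniform convergence hypothesis since $f^n(x)\in Z$ for $n\ge N$) yields $h(G,f)=h(Z,f)=0$.

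\textbf{Non-nilpotent case.} Let $\Sigma_D\subset\{1,\dots,k\}^{\N}$ be the one-sided edge shift: a sequence $(i_0,i_1,\dots)$ is admissible if each consecutive pair is allowed by $D$ (counted with multiplicity $a_{i_n i_{n+1}}$ so as to respect the several distinct coverings of $e_{i_{n+1}}$ inside $f(e_{i_n})$). By the standard theory of subshifts of finite type (see e.g.\ \cite{AM}), $h(\Sigma_D,\sigma)=\log\rho$, where $\rho$ is the Perron–Frobenius eigenvalue of $D$. I will build a surjective semi-conjugacy $\pi:\Sigma_D\to G$ with $\pi\circ\sigma=f\circ\pi$ as follows. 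Given $\underline i=(i_0,i_1,\dots)\in\Sigma_D$, the set
\[
K_n(\underline i)\;=\;\bigcap_{m=0}^{n}f^{-m}(e_{i_m})
\]
is a closed connected sub-arc of $e_{i_0}$ on which $f^m$ is monotone for $m\le n$, and the specified covering of $e_{i_m}$ is made at each step. The sequence $K_n(\underline i)$ is nested, and its intersection is a non-empty connected compactum on which $f^m$ is monotone for \emph{every} $m$; by Proposition \ref{tree-map} it is either a single point or a sub-arc. In the arc case the arc would be eventually wandering, which for a Markov graph map is impossible outside a finite exceptional set of pre-periodic arcs; collapsing these to points (which does not affect entropy, again by Proposition \ref{Do4}) defines $\pi(\underline i)$ and gives $\pi\circ\sigma=f\circ\pi$.

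\textbf{Controlling the fibers.} The hard part will be verifying $\sup_{x\in G}\#\pi^{-1}(x)<\infty$. Two itineraries project to the same $x$ only when $x$ has more than one coding, which happens exactly for points $x$ such that $f^n(x)\in A$ for some $n$ and, at that moment, the two itineraries pick two different edges adjacent to that element of $A$. Using that $A$ is finite, that each vertex in $A$ has bounded local valence in $G$, and the Markov condition $f(A)\subset V_G$, one shows that the set of possible codings of a single point is bounded by a constant depending only on $(G,A,f)$. Once this is established, Proposition \ref{Do4} applied to $\pi$ gives $h(G,f)=h(\Sigma_D,\sigma)=\log\rho$, completing the proof.
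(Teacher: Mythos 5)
First, a point of reference: the paper does not prove this proposition at all — it is stated as classical with a citation to \cite{AM,MS} — so there is no in-paper argument to compare yours against; your proposal must stand on its own. Your nilpotent case is essentially correct, and can even be streamlined: by induction on the Markov property, $f^{n}(e_i)$ is always either a vertex or a union of \emph{whole} edges, so $D^{N}=0$ forces $f^{N}(e_i)\in V_G$; then $Z:=f^{N}(G)\subset V_G$ is finite and forward invariant and Proposition \ref{Do3} gives $h(G,f)=h(Z,f)=0$.

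The non-nilpotent case has two genuine gaps. The serious one is your treatment of itineraries $\underline{i}$ for which $\bigcap_n K_n(\underline{i})$ is a non-degenerate arc. The assertion that such arcs form ``a finite exceptional set of pre-periodic arcs'' is unjustified (the iterated preimages of a single such arc already produce infinitely many), and, more importantly, the claim that collapsing them ``does not affect entropy, again by Proposition \ref{Do4}'' applies that proposition in the wrong direction: the collapsing map $c:G\to G/_{\sim}$ has non-degenerate arcs as fibers, so $\sup_x\#c^{-1}(x)=\infty$ and Proposition \ref{Do4} only yields $h(G/_{\sim})\le h(G,f)$, i.e.\ the lower bound $\log\rho\le h(G,f)$; the upper bound $h(G,f)\le\log\rho$ is exactly what is lost by collapsing, and it cannot be recovered by choosing a point in each arc either, since no such choice is continuous and equivariant in general. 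This is precisely where the classical proofs (Misiurewicz--Szlenk, Alsed\`a et al.) do real work, replacing the coding by the lap-number characterization $h(G,f)=\lim_n\tfrac{1}{n}\log c_n$, where $c_n$ is the number of monotonicity pieces of $f^{n}$, which is comparable to $\lVert D^{n}\rVert$ and sidesteps homtervals entirely. The second gap is that your fiber bound is announced rather than proved. It is in fact true, but the argument requires two observations you do not make explicit: one must code by laps (branches), not merely by edges, for the sets $K_n(\underline{i})$ to be connected at all; and the condition $f(A)\subset V_G$ forces the orbit into $V_G$ after at most one ambiguous step, after which a vertex lies in the closure of exactly one lap of each incident edge while the edge itself is determined by the preceding lap, so the coding can branch only at time $0$ and at most one later time. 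As written, both halves of your non-nilpotent case are promissory notes.
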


A special and important type of graph is the (topological) \emph{tree}, which is a connected  graph without cycles.
 A point $p$ of a tree $T$  is called an \emph{endpoint} if $T\smm \{p\}$ is connected, and called a \emph{ branch point} if  $T\smm \{p\}$ has at least $3$ connected components. For any two points $p,q\in T$, there is a unique  arc in $T$ joining $p$ and $q$. We denote this arc  by $[p,q]_T$.

\section{Postcritically-finite polynomials and its Hubbard tree}
%We here recall some classical results about the Hubbard tree and the critical portraits of a \pf polynomial.
%Particularly, we conform the related notations that will be used below.

\subsection{Postcritically-finite polynomials} \label{hubbard-tree}

Let $f$ be a \pf polynomial, i.e., such that each of its critical points has a finite (and hence periodic or preperiodic) orbit under the action of $f$.
By classical results of Fatou, Julia, Douady and Hubbard,  the filled Julia  set $K_f=\{z\in \C\mid f^n(z)\not\to \infty\}$  is  compact, connected, locally connected  and locally arc-connected. These properties also hold for the Julia set $J_f:=\partial K_f$.
The Fatou set $F_f:=\cbar\smm J_f$ consists of one unbounded component $U(\infty)$ which is equal to the basin of attraction of $\infty$, together with at most countably many bounded components constituting the interior of $K_f$. Each of the sets $K_f, J_f, F_f$ and $U(\infty)$ is fully invariant by $f$; each Fatou component is (pre)periodic (by Sullivan's non-wandering domain theorem, or by the hyperbolicity of the map); and each periodic Fatou component cycle contains at least one critical point of $f$ (including $\infty$).

As a consequence, for $f$ a \pf polynomial,
there is a system of Riemann mappings $$\Big\{\phi_U: \D\to U\,\Big|\, U \text{ Fatou component}\Big\}$$ satisfying that
each extends to a continuous map on the closure $\overline{\D}$, so that  the following diagram commutes for all $U$:
\begin{equation*}
 \begin{tikzpicture}
   \matrix[row sep=0.8cm,column sep=2.4cm] {
     \node (Gammai) {$ \overline \D $}; &
       \node (Gamma) {$\overline \D$}; \\
     \node (S2i) {$\overline U$}; &
       \node (S2) {$\overline{f(U)}$,}; \\
   };
   \draw[->] (Gamma) to node[auto=left,cdlabel] {\phi_{f(U)}} (S2);
   \draw[->] (S2i) to node[auto=right,cdlabel] {f} (S2);
   \draw[->] (Gammai) to node[auto=left,cdlabel] {\text{ power map }z^{d_{\tiny\mbox{$U$}}}} (Gamma);
   \draw[->] (Gammai) to node[auto=right,cdlabel] {\phi_U} (S2i);
 \end{tikzpicture}
 \end{equation*}
where $d_U$ denotes the degree of $f$ on $U$. The image $\phi_U(0)$ is called the \emph{center} of the Fatou component $U$. It is easy to see that any center is mapped to a critical periodic point by some iterations of $f$.
%For a point $z\in F_f$, denote by $U(z)$ the Fatou component containing $z$.
 On every periodic Fatou component $U$, including $U(\infty)$, the map $\phi_U$ realizes a conjugacy between a power map and the first return map on
$U$. The image in $U$ under $\phi_U$ of \textsc{closed}, \emph{resp.} \textsc{open}, radial lines in $\ov{\D}$ are, by definition,  \emph{internal rays} of $U$ if $U$ is bounded, \emph{resp.} \emph{external rays} if $U=U(\infty)$.
Since a power map sends a radial line to a radial line, the polynomial $f$ sends an internal/external ray to an internal/external ray.

 If $U$ is a bounded Fatou component, then  $\phi_U:\overline \D\to \overline U$ is a homeomorphism and thus every boundary point of $U$ receives exactly one internal ray from $U$. This is in general not true for $U(\infty)$, where several external rays may land at a common boundary point.
For any $\theta\in \R/\Z$, we use $\RRR_f(\theta)$ or simply $\RRR(\theta)$ to denote the image by $\phi_{U(\infty)}$ of the radial ray $\{re^{2\pi i \theta}, 0<r<1\}$ and will call it the  \emph{external ray of angle $\theta$}. We also use $\g(\theta)=\phi_{U(\infty)}(e^{2\pi i \theta})$ to denote the \emph{landing point} of the ray $\RRR(\theta)$. %Angles and the landing points of internal rays in a bounded Fatou component are defined similarly.
If $\RRR(\theta)$ lands at the boundary of a Fatou component $U$, then there is a unique internal ray of $U$ that joins the center of $U$ and the landing point of $\RRR(\theta)$. We denote this internal ray by  $r_{U}(\theta)$, and call the ray
\begin{equation}\label{extended-ray}
\EEE_U(\theta):=r_U(\theta)\cup \RRR(\theta)
\end{equation}
the \emph{extended ray of angle $\theta$ at $U$}.

\begin{definition}[supporting rays]\label{support-ray} We say that an external ray $\RRR(\theta)$ \emph{ supports} a bounded Fatou component $U$
 if
\begin{enumerate}
\item the ray lands at a boundary point $q$ of $U$, and
\item there is a sector based at $q$ delimited by $\RRR(\theta)$ and the internal ray of $U$ landing at $q$ such that
the sector does not contain other external rays landing at $q$. %(see Figure \ref{supporting-ray}).
\end{enumerate}
\end{definition}
%\begin{figure}[htpb]
%\centering
%\includegraphics[width=2.6in]{Lapin1.PNG}
%\caption{The red and green rays are supporting rays of the Fatou component $U$, but the blue one is not.} \label{supporting-ray}
%\end{figure}
%If $\RRR(x)$ supports a Fatou component $U$, we say that $x$ is a {\bf supporting angle} \footnote{see whether the notation of {\bf supporting angle} is used?} of $U$.
%It follows from  Definition \ref{support-ray} that for any bounded Fatou component $U$ and one point $z\in\partial U$ there are at most two external rays which support $U$ and land at $z$.
%Starting from the internal
%ray of $U$ that lands at the boundary point $z$ and turning it in the counterclockwise direction centric at $z$,
%the first (resp. last) encountered external ray landing at $z$  is called the {\bf right} (resp. {\bf left}) {\bf supporting ray} of $U$ at $z$.% and the corresponding external angle is called the {\bf right} (resp. {\bf left}) {\bf supporting angle}  of $U$ at $z$.

%Note that for any Fatou component $U$ and a point $z\in\partial U$, there are one left supporting ray and one right supporting ray of $U$ at $z$. The two rays may or may not equal.
%The polynomial $f$ maps a left or right supporting ray of $U$ to a left or right supporting ray of $f(U)$.
%In Figure \ref{supporting-ray}, the red ray is the right supporting ray and the green one is the left supporting ray of $U$.

 \subsection{The Hubbard trees of postcritically finite polynomials}

 The material of this part comes from \cite[Chpter 2]{DH} and \cite[Chpter I]{Poi2}.

Let $f$ be a \pf polynomial. Then
any pair of points in the closure of a bounded Fatou component  can be joined in a unique way by a Jordan arc consisting of (at most two) segments of internal rays. We call such arcs \emph{regulated} (following Douady and Hubbard).
Since $K_f$ is arc connected, given two points $z_1, z_2\in K_f$, there is an arc $\gamma: [0,1]\to K_f$ such that $\gamma(0)=z_1$ and $\gamma(1)=z_2$. In general, we will not distinguish  between the map  $\g$ and its image. It is proved in \cite{DH} that such arcs can be chosen in a unique way so that  the intersection with the closure of any Fatou component is regulated. We still call such arcs regulated and denote them by $[z_1,z_2]$.
We say that a subset $X\subset K_f$ is \emph{allowably connected} if for every $z_1,z_2\in X$ we have $[z_1,z_2]\subset X$.
We define the \emph{regulated hull}
of  a subset $X$ of $ K_f$ to be  the minimal closed allowably connected subset of $K_f$ containing $X$.
%\end{definition}

\begin{proposition}[\cite{DH}. Proposition 2.7]
Let $f$ be a \pf polynomial. For a collection of $z_1,\ldots,z_n$ finitely many points in $K_f$, their regulated hull   is a finite tree with endpoints in $\{z_1,\ldots,z_n\}$.
\end{proposition}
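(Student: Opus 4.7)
The plan is to induct on $n$, at each step constructing a candidate tree $T_n$ and verifying it equals the regulated hull of $\{z_1,\ldots,z_n\}$. The base case $n=1$ is trivial: $T_1:=\{z_1\}$ is a singleton, a (degenerate) tree with sole endpoint $z_1$.

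For the inductive step, assume $T_n$ is a finite tree equal to the regulated hull of $\{z_1,\ldots,z_n\}$ with endpoints in $\{z_1,\ldots,z_n\}$. If $z_{n+1}\in T_n$, set $T_{n+1}=T_n$; the claim still holds. If $z_{n+1}\notin T_n$, I would define a ``projection'' point $p_{n+1}\in T_n$ and set $T_{n+1}:=T_n\cup[z_{n+1},p_{n+1}]$. To define $p_{n+1}$, fix any $z\in T_n$ and parametrize the regulated arc $\alpha=[z_{n+1},z]$ as a homeomorphism $\alpha:[0,1]\to K_f$ with $\alpha(0)=z_{n+1}$ and $\alpha(1)=z$. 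Since $T_n$ is closed and misses $z_{n+1}$, the set $\alpha^{-1}(T_n)$ is a nonempty closed subset of $(0,1]$; let $t=\min\alpha^{-1}(T_n)$ and $p_{n+1}:=\alpha(t)$. The sub-arc $\alpha([t,1])$ is a Jordan arc from $p_{n+1}$ to $z$ whose intersection with the closure of any Fatou component inherits the ``at most two internal ray segments'' property from $\alpha$, and so by uniqueness of regulated arcs it coincides with $[p_{n+1},z]$, which lies in $T_n$ by allowable connectedness. Hence $[z_{n+1},p_{n+1}]:=\alpha([0,t])$ meets $T_n$ at the single point $p_{n+1}$. A parallel uniqueness argument shows $p_{n+1}$ is independent of $z$: for any other $z'\in T_n$ with analogously defined first-hit point $p'$, the concatenation $[z_{n+1},p_{n+1}]\cup[p_{n+1},z']$ is a regulated Jordan arc from $z_{n+1}$ to $z'$, hence equals $[z_{n+1},z']$, forcing $p'=p_{n+1}$.

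Next I would verify that $T_{n+1}$ is the regulated hull of $\{z_1,\ldots,z_{n+1}\}$. Closedness is immediate. For allowable connectedness, pick $u,v\in T_{n+1}$: when both lie in $T_n$ or both in $[z_{n+1},p_{n+1}]$, the regulated arc $[u,v]$ is manifestly in $T_{n+1}$; in the mixed case $u\in T_n$, $v\in[z_{n+1},p_{n+1}]$, the decomposition $[z_{n+1},u]=[z_{n+1},p_{n+1}]\cup[p_{n+1},u]$ (valid by the same uniqueness argument, with $[p_{n+1},u]\subset T_n$ by allowable connectedness) shows $v$ lies on $[z_{n+1},u]$, so $[v,u]$ is a sub-arc of it and again lies in $T_{n+1}$. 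Minimality follows because any closed allowably connected set containing $\{z_1,\ldots,z_{n+1}\}$ must contain both $T_n$ (by the inductive identification with the hull) and the arc $[z_{n+1},p_{n+1}]$ (by allowable connectedness, since the set contains $z_{n+1}$ and $p_{n+1}$).

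Finally, since $T_{n+1}$ is obtained from the finite tree $T_n$ by attaching a single arc at the point $p_{n+1}$, after possibly subdividing the edge of $T_n$ that contains $p_{n+1}$ in its interior, $T_{n+1}$ is itself a finite tree. Its endpoints are the endpoints of $T_n$ (minus $p_{n+1}$ if that point was an endpoint of $T_n$, for it then becomes a valence-two interior point) together with the new endpoint $z_{n+1}$, and so remain contained in $\{z_1,\ldots,z_{n+1}\}$. The principal obstacle throughout is the disciplined use of the uniqueness of regulated arcs in $K_f$ to justify every arc decomposition and the well-definedness of $p_{n+1}$; once this is handled, the tree structure and endpoint assertion follow by a clean induction.
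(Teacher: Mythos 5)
The paper does not actually prove this statement: it is quoted verbatim from Douady--Hubbard (Orsay Notes, Proposition 2.7), so there is no in-paper argument to measure you against. Your inductive ``first-hit'' construction --- attach $z_{n+1}$ to the already-built tree $T_n$ along the initial segment $\alpha([0,t])$ of a regulated arc, where $t=\min\alpha^{-1}(T_n)$ --- is the standard route to this result, and the skeleton (well-definedness of $p_{n+1}$, allowable connectedness, minimality, the edge/endpoint bookkeeping) is sound.

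There is, however, one gap that you paper over each time you invoke ``uniqueness of regulated arcs'': the claim that if $[a,p]$ and $[p,b]$ are regulated arcs with $[a,p]\cap[p,b]=\{p\}$, then their union \emph{is} the regulated arc $[a,b]$. Uniqueness only says there is at most one arc from $a$ to $b$ whose intersection with the closure of every Fatou component is regulated; to conclude that the concatenation is that arc, you must first verify that the concatenation is itself regulated, and this does not follow from it being a Jordan arc. Concretely, if some bounded Fatou component $U$ has $p\notin\overline{U}$ while both $[a,p]$ and $[p,b]$ meet $\overline{U}$, then $\bigl([a,p]\cup[p,b]\bigr)\cap\overline{U}$ is disconnected and the union is not regulated. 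So you need the genuinely nontrivial input that this configuration cannot occur --- equivalently, the ``tripod'' property $[a,b]\subseteq[a,p]\cup[p,b]$ for all triples, which is the actual content of Douady--Hubbard's lemma on the tree structure of regulated arcs in $K_f$. This lemma is true and your argument goes through once it is supplied (the concatenation is an arc from $a$ to $b$ containing the arc $[a,b]$ with the same endpoints, hence equals it), but as written the crucial step is asserted rather than proved, and it is used in the well-definedness of $p_{n+1}$, in the independence of $p_{n+1}$ from the reference point $z$, and in the mixed case of the allowable-connectedness check.
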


The \emph{Hubbard tree} of $f$, denoted by $H_f$,  is defined to be the regulated hull  in $K_f$ of the finite set ${\rm post}(f)$.
Its vertex set $V_{H_f}$ is defined to be the union of ${\rm post}(f)$, ${\rm crit}(f)\cap H_f$ and the branch points of $H_f$.
%We remark that in most of literatures, the Hubbard tree is defined to be the regulated hull of $ {\rm post}(f)$, but not $ {\rm crit}(f)\cup  {\rm post}(f)$. By Proposition \ref{Do3}, these two kinds of definitions of the Hubbard tree give the same core entropy of $f$.  Our definition here is just for convenience of the discussion below.
The following is a well-known result (see \cite[Section I.1]{Poi2}).

\begin{proposition}\label{Hubbard-tree} Any \pf  polynomial $f$ maps each edge of $H_f$ homeomorphically onto the union of  edges of $H_f$. Consequently, $f:H_f\to H_f$ is a Markov map.
\end{proposition}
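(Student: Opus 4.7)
The plan is to verify three claims in order: (a) both $H_f$ and its vertex set $V_{H_f}$ are forward-invariant under $f$; (b) the restriction of $f$ to the closure of each edge of $H_f$ is injective, hence a homeomorphism onto its image; and (c) this image decomposes as a finite union of edges of $H_f$. The Markov conclusion then follows immediately by taking the finite set $A=V_{H_f}$ in Definition \ref{def:markov}: we have $f(A)\subseteq A$ by (a), and $f$ is monotone on each component of $H_f\setminus V_{H_f}$ by (b)--(c).

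For (a), the invariance $f(H_f)\subseteq H_f$ reduces to showing that $f$-images of regulated arcs between postcritical points stay inside $H_f$. Since ${\rm post}(f)$ is forward-invariant and $f$ intertwines with the power maps $z^{d_U}$ on each Fatou component $U$, $f$ sends internal rays to internal rays, so the image of a regulated arc $[p,q]$ with $p,q\in{\rm post}(f)$ is a path from $f(p)$ to $f(q)$ made of internal-ray segments and arcs in the Julia set; one then checks that this path is contained in the regulated hull of $\{f(p),f(q)\}\subseteq{\rm post}(f)$, and hence in $H_f$. For the vertex invariance, postcritical and critical vertices clearly map into ${\rm post}(f)\subseteq V_{H_f}$; for a non-critical branch point $p$, the map $f$ is a local biholomorphism at $p$, so the $k\geq 3$ local branches of $H_f$ at $p$ map injectively onto $k$ arcs of $H_f$ meeting at $f(p)$, forcing $f(p)$ to be a branch point.

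Next, fix an edge $e$, i.e.\ a component of $H_f\setminus V_{H_f}$. Since ${\rm crit}(f)\cap H_f\subseteq V_{H_f}$, the arc $e$ contains no critical points of $f$, so $f|_e$ is a local biholomorphism of $\C$. To show $f|_{\bar e}$ is globally injective I would exploit the planar structure: $H_f\subset\C$ is a planar tree, and at each point $v$ visited in the interior of $f(e)$, the derivative of $f$ at the preimage $x\in e$ with $f(x)=v$ dictates that $f(e)$ enters and leaves $v$ along a specific pair of edges of $H_f$ determined conformally by the direction of $e$ at $x$. Tracking these direction data, together with the orientation-preserving nature of $f$ and the planar embedding of $H_f$, forces the image path to be a simple walk in the tree. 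Once injectivity is in hand, compactness of $\bar e$ upgrades $f|_{\bar e}$ to a homeomorphism onto its image, and by Proposition \ref{tree-map} this image is an arc $\alpha\subseteq H_f$. The endpoints of $\alpha$ are $f$-images of the endpoints of $\bar e$ and lie in $V_{H_f}$ by (a), while the finite set $\alpha\cap V_{H_f}$ partitions $\alpha$ into sub-arcs whose interiors avoid $V_{H_f}$; each such sub-arc is contained in the closure of a single component of $H_f\setminus V_{H_f}$, and by the uniqueness of arcs joining two points in a tree it equals that closure. Hence $\alpha$ is a union of edges, completing (c).

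The main obstacle is the global injectivity of $f|_{\bar e}$ in step (b). Local injectivity alone is not enough: a locally injective continuous map from an interval into a finite tree can in principle revisit the same vertex by entering and leaving along different incident edges (think of a walk $a\to v\to b\to v\to c$ on a tripod), so the bare topological structure of $H_f$ does not rule this out. The proof must exploit both the holomorphicity of $f$ (so the local direction is preserved consistently) and the planar embedding of $H_f$ in $\C$ (to pin down the angular data at each vertex visited by the path). By contrast, the invariance step (a), the decomposition step (c), and the Markov consequence are topologically routine once the injectivity of $f|_{\bar e}$ has been secured.
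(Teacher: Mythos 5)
First, note that the paper offers no proof of this proposition: it is quoted as a known result from \cite[Section I.1]{Poi2}, so there is no in-paper argument to compare yours against. Judged on its own terms, your outline has the right architecture (invariance of $H_f$ and $V_{H_f}$, injectivity on each edge, decomposition of the image into edges, then the Markov conclusion), but the weight is distributed wrongly, and the step you yourself flag as the main obstacle is left as a heuristic. The ``direction-tracking'' argument via the planar embedding and conformality is never turned into a proof --- and in fact none is needed: once $f(H_f)\subseteq H_f$ is known, $f|_{\bar e}$ is a locally injective path from a compact arc into the finite tree $H_f$ (no critical point lies in $e$, and at a critical endpoint $f$ is still injective on each local prong), and a locally injective path into a finite tree is automatically injective. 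Your tripod walk $a\to v\to b\to v\to c$ is not a counterexample: wherever the walk reverses inside the leg $[v,b]$, it fails to be locally injective, since times just before and just after the turning time map to the same side of the turning point. Precisely: if $\gamma(s)=\gamma(t)$ with $s<t$, the image $\gamma([s,t])$ is a subtree having an endpoint $q\neq\gamma(s)$, attained at some interior time $u$; near $q$ that subtree is a half-open arc, so composing $\gamma$ with a parametrization of it gives a nonnegative continuous function vanishing at the interior point $u$, which contradicts injectivity of $\gamma$ near $u$. So step (b) is purely topological and requires no holomorphic or planar input.

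The genuine gap is instead in step (a), which you dismiss as routine. To get $f([p,q])\subseteq H_f$ for $p,q\in{\rm post}(f)$ you must control the image of a regulated arc, and your assertion that $f([p,q])$ lies in the regulated hull of $\{f(p),f(q)\}$ is false in general: if $[p,q]$ meets critical points in its interior, the image branches through their images, so the correct target is the regulated hull of $\{f(p),f(q)\}$ together with the $f$-images of those critical points (still a subset of ${\rm post}(f)$, hence of $H_f$). Establishing even that reduces to the statement that a regulated arc $[z,w]$ with no critical point in its interior is mapped by $f$ homeomorphically onto $[f(z),f(w)]$ --- in particular that the image does not overshoot the regulated arc between the endpoint images. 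This is exactly the nontrivial content of the whole proposition; it is what \cite{Poi2} actually proves (using the regulated structure of arcs: their intersections with closures of Fatou components are unions of at most two internal rays, which $f$ sends to internal rays, plus a Julia-set part controlled via external rays), and it cannot be deduced from the tree lemma above because at that stage one does not yet know that the image lies in a tree. As written, the proposal therefore assumes in step (a) essentially the fact it sets out to prove.
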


Using Proposition \ref{entropy-formula}, we immediately get the following result.
 \begin{corollary}
The topological entropy of $f$ acting on $H_f$ equals to the logarithm of the spectral radius of the incidence matrix $D_{(H_f,f)}$, i.e., $h(H_f,f)=\log\rho(D_{(H_f,f)})$.
\end{corollary}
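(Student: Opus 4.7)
The proof is an immediate synthesis of the two preceding propositions, so my plan has essentially two steps. First, I would invoke Proposition~\ref{Hubbard-tree} to conclude that $f:H_f\to H_f$ satisfies the Markov condition of Definition~\ref{def:markov}, by taking $A:=V_{H_f}$: each edge of $H_f$ is mapped homeomorphically, hence monotonically in the sense of Definition~\ref{monotone}, onto a union of edges, so $f$ is monotone on every connected component of $H_f\setminus V_{H_f}$. The inclusion $f(A)\subset V_{H_f}$ holds because the vertex set was defined to contain ${\rm post}(f)\cup({\rm crit}(f)\cap H_f)$ together with the branch points; the last piece requires the small combinatorial check that the $f$-image of a branch point is again a vertex, which follows from the edge-by-edge homeomorphism property together with local-degree considerations. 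Once this is established, the incidence matrix $D_{(H_f,f)}$ is well-defined with non-negative integer entries.

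Second, I would apply Proposition~\ref{entropy-formula} verbatim to $(H_f, f)$, which yields $h(H_f,f)=\log\rho(D_{(H_f,f)})$ as soon as $D_{(H_f,f)}$ is not nilpotent. The only subtlety I would flag, and which I view as the main ``obstacle'' in the statement, is the nilpotent case: Proposition~\ref{entropy-formula} gives $h(H_f,f)=0$ there, while $\log\rho(D_{(H_f,f)})=\log 0$ is $-\infty$, so the corollary's identity should be read with the tacit convention that this degenerate situation is excluded. In fact this degeneracy only occurs when $H_f$ reduces to a single point (for instance when $f$ is affinely conjugate to $z\mapsto z^d$); for any genuinely non-degenerate Hubbard tree, iterating any edge eventually covers that edge again, so $D_{(H_f,f)}$ has at least one non-zero diagonal term in some power, hence is not nilpotent, and the equality holds literally.

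Apart from these bookkeeping remarks, no further argument is required: the corollary really is a one-line consequence of Propositions~\ref{Hubbard-tree} and~\ref{entropy-formula}, and all the substantive work has already been done in the proof of Proposition~\ref{Hubbard-tree} cited from \cite{Poi2}.
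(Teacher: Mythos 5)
Your proposal is correct and follows exactly the paper's route: the paper derives the corollary in one line by combining Proposition~\ref{Hubbard-tree} (which makes $f:H_f\to H_f$ Markov) with Proposition~\ref{entropy-formula}. Your extra remark about the nilpotent/degenerate case is a sensible clarification the paper leaves implicit, but it does not change the argument.
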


\section{Weak critical markings and the induced partitions}

\subsection{Weak critical markings of \pf polynomials}\label{section-critical-portrait}

For each \pf polynomial, we will define in this section  a collection of combinatorial data from the rays landing at its  critical points and on the critical Fatou components, called a \emph{weak critical marking} of the polynomial. %(see \cite[Section I.2.1]{Poi1} for the construction).
This concept is a generalization of the well-known one: critical markings of \pf polynomials (see \cite[Section I.2.1]{Poi1}), which was first introduced in \cite{BFH} to classify the strictly preperiodic polynomials as dynamical systems, and then extended by Poirier \cite{Poi1} to the general case (including periodic critical points).

%The construction of a kind of polynomial critical portrait is given in \cite[Chp.I]{Poi1}, where it is called the {\it polynomial critical marking}. In our paper, the definition of the polynomial critical portrait is a little looser than  Poirier's (compare with \cite[Chp. I, \S\ 2]{Poi1}).
%\noindent {\bf Convention:} Throughout this paper, we always work with
%right supporting rays. For convenience, we just say supporting rays without emphasizing.  We remark that we could equally well work with the left supporting rays analogue,
%but there must be the same choice throughout.

Let $f$ be a \pf  polynomial of degree $d$. We first define $\Theta(U)$  as follows for each  \emph{critical Fatou component} $U$, i.e., a Fatou component containing a critical point. %Let $c$ be a critical point, {\it resp}. $U$ be a  critical Fatou component of $f$.
Denote $\delta_U=\text{deg}(f|_U)$.
\begin{itemize}
%\item The case that $c$ is a critical point in $J_f$. For an angle $\theta$ such that $\RRR(\theta)$ lands at $f(c)$, there are exactly $\delta_c$ rays among $f^{-1}(\RRR(\theta))$ landing at $c$. We denote $\Theta(c)$ the set of arguments of these rays.
\item In the case that $U$ is a periodic Fatou component, let
\[U\mapsto f(U)\mapsto\cdots\mapsto f^n(U)=U \]
be a critical Fatou cycle of period $n$. We will construct the associated set $\Theta(U')$ for every critical Fatou component $U'$ in this cycle simultaneously.
Let $z\in\partial U$ be a \emph{root} of $U$, i.e., a periodic point  with period less than or equal to $n$. Such $z$ must exist because one can choose it as the landing point of an internal ray of $U$ fixed by $f^n$.  Note that this choice
naturally determines a root $f^k(z)$ for each Fatou component $f^k(U)$ for $k\in\{0,\ldots,n-1\}$, which is called the \emph{preferred root} of $f^k(U)$.
Let $U'$ be a critical Fatou component in the cycle and $z'$  its preferred root.  Consider a supporting ray $\RRR(\theta)$ for this component $U'$ at $z'$. We define $\Theta(U',z',\theta)$ the set of arguments of the $\delta_{U'}$ supporting rays for the component $U'$ that are inverse images
of $f(\RRR(\theta))$. There are finitely many such set $\Theta(U',z',\theta)$ according to the different choices of the roots $z$ of $U$ and the arguments $\theta$. We pick one of them and simply denote it by $\Theta(U')$.
\item In the case that $U$ is a strictly preperiodic Fatou component, let $n$ be the minimal number such that $f^n(U)$ is a critical Fatou component. And let  $z\in \partial U$ be a point which is  mapped by $f^n$ to the point $\g(\eta)$, the landing point of $\RRR(\eta)$, with $\eta\in \Theta(f^n(U))$.  Consider a supporting ray $\RRR(\theta)$ for the component $U$ at $z$, and define $\Theta(U,z,\theta)$ to be the set of arguments of the $\delta_{U}$ supporting rays for $U$ that are inverse images
of $f(\RRR(\theta))$.  There are  finitely many such $\Theta(U,z,\theta)$ according to the different choices of such $z\in \partial U$ and the arguments $\theta$. We pick one of them and simply denote it by $\Theta(U)$.
\end{itemize}

\begin{definition}\label{weak-critical-marking}
Let $f$ be a \pf polynomial, with  $U_1,\ldots,U_n$ the pairwise disjoint critical Fatou components.  A finite collection of finite subsets of the unit circle
\begin{equation}\label{eq:weak-critical-marking}
\mathbf{\Theta}=\mathbf{\Theta}_f:=\{\Theta_1(c_1),\ldots,\Theta_m(c_m);\Theta(U_1),\ldots,\Theta(U_n)\}
\end{equation}
is called a \emph{weak critical marking} of $f$ if
\begin{enumerate}
\item each $\Theta(U_k)$ is defined as above for $k\in\{1,\ldots,n\}$;
\item the union of $c_1,\ldots, c_m$ (not necessarily pairwise disjoint) equals  the union of the critical points of $f$ in $J_f$;
\item each $\Theta_j(c_j)$ consists of at least two angles such that the external rays with these angles land at $c_j$ and are mapped by $f$ to a single ray;
\item the convex hulls %$hull(\Theta_1(c_1)),\ldots, hull(\Theta_m(c_m))$
in the closed unit disk of $\Theta_1(c_1),\ldots,\Theta_m(c_m)$ are pairwise disjoint;
\item for each critical point $c\in J_f$,
\[{\rm deg}(f|_c)-1=\sum_{c_j=c}\big(\#\Theta_j(c_j)-1\big). \]
\end{enumerate}
\end{definition}

\begin{remark}
\begin{enumerate}
\item Any \pf polynomial $f$ always has a weak critical marking. For example, let $c_1,\ldots,c_m$ be the pairwise different critical points of $f$ in $J_f$, and $U_1,\ldots,U_n$ the pairwise different critical Fatou components. We first construct $\Theta(U_1),\ldots,\Theta(U_n)$ as above. Pick a collection of angles  $\{\theta_1,\ldots,\theta_m\}$ such that the ray $\RRR(\theta_j)$ lands at $f(c_j)$ for each $j\in\{1,\ldots,m\}$. We then define each $\Theta_j(c_j)$ the set of arguments of the rays in $f^{-1}(\RRR(\theta_j))$ that land at $c_j$. It is easy to check that such defined $\Theta_1(c_1),\ldots,\Theta_m(c_m)$ satisfy conditions (2)--(5) in Definition \ref{weak-critical-marking}. Thus, together with the chosen $\Theta(U_1),\ldots,\Theta(U_n)$, one gets a weak critical marking $$\Th=\{\Theta_1(c_1),\ldots,\Theta_m(c_m);\Theta(U_1),\ldots,\Theta(U_n)\}.$$
Note that the weak critical markings of a \pf polynomial are not unique but there are finitely many choices.
\item Let $\Th$ be a weak critical marking of a \pf polynomial $f$ of the form  \eqref{eq:weak-critical-marking}. When the points $c_1,\ldots,c_m$ are pairwise different, the weak critical marking $\Th$ is further called  a \emph{critical marking} of $f$.  In this case,  the cardinality of each $\Theta_j(c_j)$ equals to ${\rm deg}(f|_{c_j})$.  We  remark that  our definition of  critical markings is  less restrictive than that of Poirier in \cite{Poi1}.
\end{enumerate}
\end{remark}

For example, we consider the \pf polynomial $f_c(z)=z^3+c$ with $c\approx 0.22036+1.18612 i$. The critical value $c$ receives two rays with arguments $11/72$ and $17/72$. Then, $$\Th:=\large\{\ \Theta_1(0):=\left\{11/216,83/216\right\},\Theta_2(0):=\left\{89/216,161/216\right\}\ \large\}$$
is a weak critical marking, but not a critical marking, of $f_c$, and
\[\Th:=\large\{\ \Theta_1(0):=\{11/216,83/216,155/216\}\ \large\}\]
is a critical marking of $f_c$ (see Figure \ref{portrait}).
\begin{figure}
\begin{tikzpicture}
\node at (0,0) {\includegraphics[width=6.5cm]{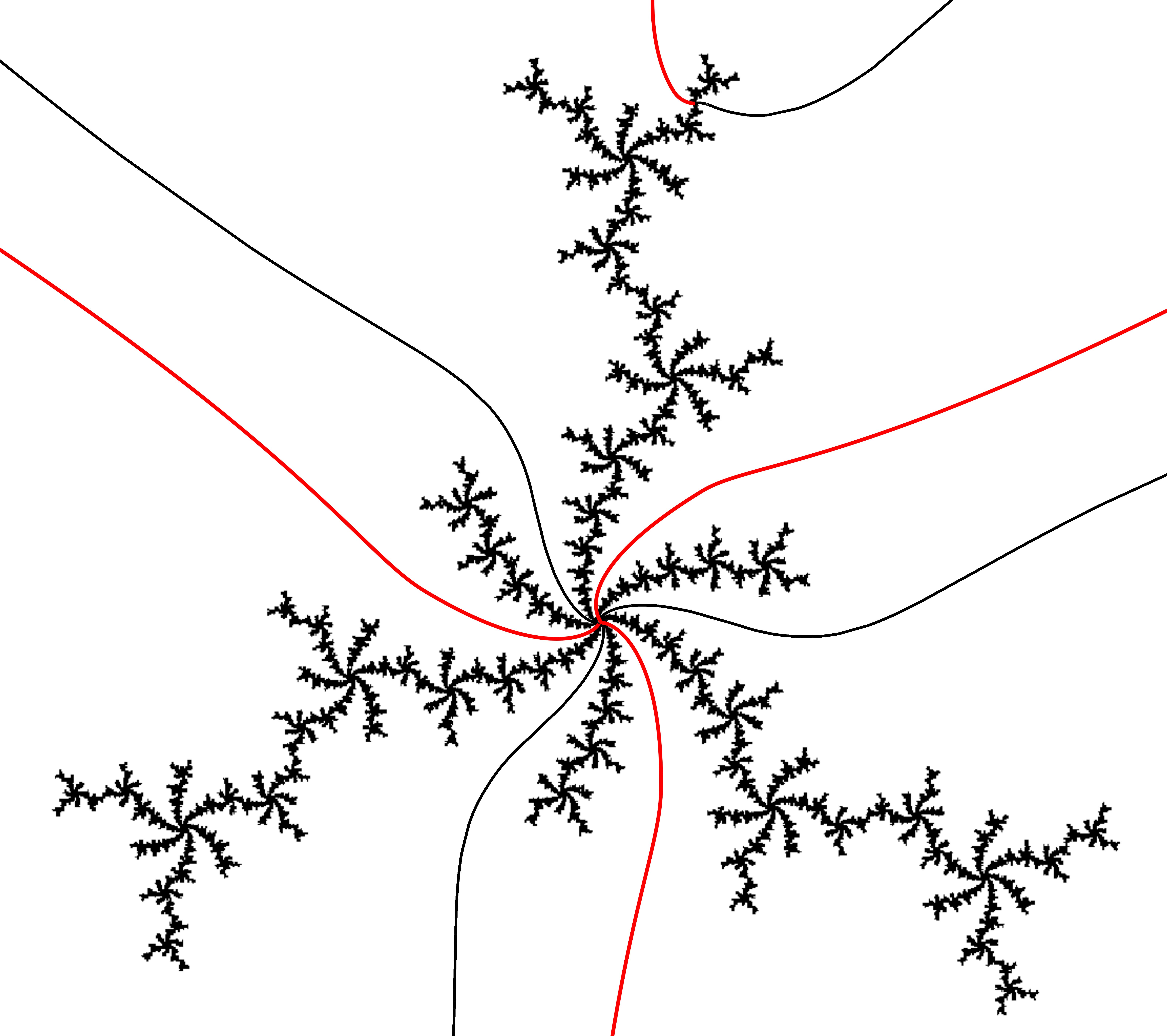}};
\node at (2.25,3){\footnotesize{$\frac{11}{72}$}};
\node at (0.25,3){\footnotesize{$\frac{17}{72}$}};
\node at (3.5,1.25){\footnotesize{$\frac{17}{216}$}};
\node at (3.5,0.25){\footnotesize{$\frac{11}{216}$}};
\node at (-3.5,2.5){\footnotesize{$\frac{83}{216}$}};
\node at(-3.5,1.5){\footnotesize{$\frac{89}{216}$}};
\node at(-1,-2.5){\footnotesize{$\frac{155}{216}$}};
\node at(0.5,-2.5){\footnotesize{$\frac{161}{216}$}};
%\node at(4.5,2){\footnotesize{$V_2$}};
%\node at(6.5,-1.5){\footnotesize{$V_3$}};
%\node at(-5.5,1){\footnotesize{$I_1$}};
%\node at (-4,0){\footnotesize{$I_2$}};
%\node at (-2.5,-1){\footnotesize{$I_3$}};
%\ruler{6}{3}
\end{tikzpicture}
 \caption{The Julia set of $f_c(z)=z\mapsto z^3+0.22036+1.18612 i$.}\label{portrait}
\end{figure}

%We call $\Theta(c_1),\ldots,\Theta(c_m)\in \Th_f$ the {\bf Julia-type} and $\Theta(U_1),\ldots,\Theta(U_n)\in\Th_f$ the {\bf Fatou-type} elements of $\Th_f$.
 In the following, we will use several notations as listed in the table below:
\begin{center}
\begin{tabular}{|c|c|c|}
\hline
&&\\[-3pt]
\small{$\mathbf{\Theta}_\FFF=\{\Theta(U_1),\ldots,\Theta(U_n)\}$} & \small{$\text{crit}(\mathbf{\Theta}_\FFF)=\cup_{k=1}^{n}\Theta(U_k)$} & \small{$\text{post}(\mathbf{\Theta}_\FFF)=\cup_{n\geq1}\tau^n(\text{crit}(\mathbf{\Theta}_\FFF))$}\\
&&\\[-3pt]
\hline &&\\[-3pt]
\small{$\mathbf{\Theta}_\JJJ=\{\Theta_1(c_1),\ldots,\Theta_m(c_m)\}$} & \small{$\text{crit}(\mathbf{\Theta}_\JJJ)=\cup_{j=1}^{m}\Theta_j(c_j)$} &\small{$\text{post}(\mathbf{\Theta}_\JJJ)=\cup_{n\geq1}\tau^n(\text{crit}(\mathbf{\Theta}_\JJJ))$}\\&&\\[-3pt]\hline
&&\\[-3pt]
\small{$\mathbf{\Theta}=\mathbf{\Theta}_\FFF\cup \mathbf{\Theta}_\JJJ$}&\small{$\text{crit}(\mathbf{\Theta})=\text{crit}(\mathbf{\Theta}_\FFF)\cup \text{crit}(\mathbf{\Theta}_\JJJ)$}& \small{$\text{post}(\mathbf{\Theta})=\text{post}(\mathbf{\Theta}_\FFF)\cup \text{post}(\mathbf{\Theta}_\JJJ)$}\\&&\\[-3pt]\hline
\end{tabular}
\end{center}

Set $\T:=\R/\Z$ and let $\tau:\T\to \T$ be the map defined by $\tau(\theta)=d\theta ({\rm mod}~\Z)$. By the construction, we immediately get the basic properties of $\Th$ (see also \cite[Section I.3]{Poi1}).

\begin{proposition}\label{define-portrait}
Enumerating the elements of $\Th$ by $\Theta_1,\ldots,\Theta_{m+n}$, then we have
\begin{enumerate}
\item each  $\tau(\Theta_i),i\in\{1,\ldots,m+n\}$,  is a singleton;
\item the convex hulls $hull(\Theta_i),hull(\Theta_j)$ in the closed unit disk of $\Theta_i,\Theta_j$ intersect at most at a point of $\T$ for any $i\not=j\in\{1,\ldots,m+n\}$;
\item each $\#\Theta_i\geq 2$, and $\sum_{i=1}^{m+n}(\#\Theta_i-1)=d-1$;
%\item the sets $\Theta(c_1),\ldots,\Theta(c_m)$ are pairwise disjoint.
\end{enumerate}
\end{proposition}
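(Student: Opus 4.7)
The plan is to dispatch (1) and (3) by direct bookkeeping and save the real work for (2). For (1), I would simply unwind the definition: by condition (3) of Definition \ref{weak-critical-marking} the external rays with angles in $\Theta_j(c_j)$ all map under $f$ to a single ray, and by the construction of $\Theta(U_k)$ its elements are exactly the $\delta_{U_k}$ preimage angles of one ray $f(\RRR(\theta))$; since $f\circ\RRR(\theta)=\RRR(\tau(\theta))$, in both cases $\tau(\Theta_i)$ is a singleton.

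For (3), the bound $\#\Theta_i \ge 2$ is built into condition (3) for Julia pieces and into $\#\Theta(U_k)=\delta_{U_k}\ge 2$ for Fatou pieces. For the sum I would split
\[
\sum_{i=1}^{m+n}(\#\Theta_i - 1) = \sum_{j}(\#\Theta_j(c_j)-1) \;+\; \sum_{k}(\delta_{U_k}-1),
\]
rewrite the first summand as $\sum_{c\in\mathrm{crit}(f)\cap J_f}(\deg(f|_c) - 1)$ via condition (5), and rewrite $\delta_{U_k}-1 = \sum_{c\in U_k\cap\mathrm{crit}(f)}(\deg(f|_c) - 1)$ by Riemann--Hurwitz applied to each proper holomorphic map $f|_{U_k}\colon U_k\to f(U_k)$ between simply connected domains. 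Summing and applying Riemann--Hurwitz to $f\colon\cbar\to\cbar$ (or equivalently using that $f'$ has degree $d-1$) gives the total $d-1$.

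Part (2) is the geometric heart. The guiding principle is that external rays are pairwise disjoint simple arcs in $\C\smallsetminus K_f$, so non-crossing of rays in $\C$ translates to non-linking of the corresponding chord diagrams in $\overline{\D}$; in particular, if the landing points of the rays of $\Theta_i$ and $\Theta_j$ are disjoint, their convex hulls cannot share any interior point. If $\Theta_i,\Theta_j\in\Th_{\JJJ}$ the conclusion is immediate from condition (4) of Definition \ref{weak-critical-marking}. Otherwise I may assume $\Theta_j=\Theta(U_k)$, whose rays all land at the preferred root $z_k$, a periodic point on $\partial U_k$. If the partner is $\Theta_i(c_i)\in\Th_{\JJJ}$, then $c_i$ is strictly preperiodic (in a \pf polynomial, periodic critical points lie in attracting basins, hence in $F_f$), so $c_i\neq z_k$ and non-crossing suffices; likewise for a Fatou partner $\Theta(U_l)$ with $z_l\neq z_k$.

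The delicate subcase, and what I expect to be the main obstacle, is $\Theta(U_k)$ versus $\Theta(U_l)$ when $z_k=z_l$: two distinct critical Fatou components sharing a preferred root. Here I would lean on Definition \ref{support-ray}: each ray in $\Theta(U_k)$ bounds, together with the internal ray of $U_k$ at $z_k$, a sector at $z_k$ containing no other external rays landing there; the union of such sectors attached to $U_k$ is disjoint from the analogous union attached to $U_l$ because $U_k\cap U_l=\emptyset$. Translating this local picture into arguments on $\T$, the elements of $\Theta(U_k)$ and of $\Theta(U_l)$ are confined to complementary arcs of $\T$ between consecutive angles, up to at most one common angle corresponding to a ray that is simultaneously supporting for both components, which is exactly the non-linking statement with intersection at most one point of $\T$. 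The real technical difficulty is making this ``sector at $z_k$'' argument rigorous and uniform in the strictly preperiodic case, where preferred roots are chosen by pulling back landing points of postcritical rays and can a priori coincide in unexpected ways.
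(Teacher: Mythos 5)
The paper itself offers no proof of this proposition (it is declared to follow immediately from the construction, with a pointer to Poirier), so your attempt has to be judged on its own terms. Parts (1) and (3) are correct and complete: the singleton property is exactly what condition (3) of Definition \ref{weak-critical-marking} and the construction of $\Theta(U)$ encode, and your Riemann--Hurwitz bookkeeping of the total criticality is the right computation.

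Part (2), however, rests on a false premise. The rays with arguments in $\Theta(U_k)$ do \emph{not} all land at the preferred root $z_k$: by construction they are the $\delta_{U_k}$ supporting rays of $U_k$ that are preimages of the single ray $f(\RRR(\theta))$, and since $f\colon\partial U_k\to\partial f(U_k)$ is a degree-$\delta_{U_k}$ covering, these rays land at the $\delta_{U_k}$ \emph{distinct} preimages of $f(z_k)$ on $\partial U_k$, only one of which is $z_k$. This is visible in the paper's own example $f(z)=z^3-\tfrac{3}{2}z$, where $\RRR(7/12)$ and $\RRR(1/4)$ land at two different points of $\partial U_A$. Two consequences. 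First, $\bigcup_{\theta\in\Theta(U_k)}\RRR(\theta)$ is then disconnected, so ``disjoint simple arcs cannot link'' does not by itself constrain $hull(\Theta(U_k))$ against another hull; you need the \emph{extended} rays $\EEE_{U_k}(\theta)$, which join these external rays through the center of $U_k$ into the connected star-like set $\RRR(U_k)$ of Lemma \ref{property-R(c)}, and then read off the unlinking at the circle at infinity from the fact that two such stars meet $K_f$ in at most one common point and share at most one external ray. Second, your trichotomy (landing-point sets disjoint unless two preferred roots coincide) misses the configurations that actually produce a one-point intersection of hulls: in Example \ref{no-well}, $\RRR(1/4)$ is a common supporting ray of $U_A$ and $U_B$, so $\Theta(U_A)\cap\Theta(U_B)=\{1/4\}$ with no shared root in your sense; and in the second example of that figure the ray $\RRR(1/3)\in\Theta(U)$ lands at the Julia critical point $-1$, so $\Theta(U)$ and $\Theta(-1)$ share the angle $1/3$ even though no two Fatou components share a root. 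Neither case is covered by your argument, and the ``delicate subcase $z_k=z_l$'' you single out is not where the difficulty lies.
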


In the construction of $\Theta(U)$, we obtain some internal rays of $U$ associated with $\Theta(U)$. These internal rays will play an important role in the following discussion.

Remember that $\g(\theta)$ denotes the landing point of $\RRR(\theta)$, and that $r_U(\theta)$ denotes the internal ray of $U$  landing at $\g(\theta)$ if $\g(\theta)\in\partial U$.

\begin{definition}[critical/postcritical internal rays]\label{critical-internal-ray}
An internal ray is called a \emph{critical internal ray} (relatively to $\Th$) if it can be represented as $r_U(\theta)$ for a critical Fatou component $U$ and $\theta\in\Theta(U)$; and called a \emph{postcritical internal ray} (relatively to $\Th$) if it is an iterated image by $f$ of a critical internal ray.
\end{definition}

By the construction of the weak critical marking $\Th$, we have the following result.

\begin{lemma}\label{basic-property}
\begin{enumerate}
\item For each $\theta\in{\rm crit}(\Th_\FFF)/{\rm post}(\Th_\FFF)$, there is a critical/postcritical Fatou component $U$ such that $r_U(\theta)$ is a critical/postcritical internal ray.
\item If $r_U(\theta)$ is a critical/postcritical internal ray, then $f(r_U(\theta))=r_{f(U)}(\tau(\theta))$ is a postcritical internal ray.
\item Each critical/postcritical internal ray is eventually periodic under the iterations of $f$.
 %and such a ray $r_{U}(\theta)$ is periodic if and only if $\theta$ is periodic, and they have the same period.
\item The closure of any critical Fatou component $U$ contains no other critical/postcritical internal rays except those $r_U(\theta)$ with $\theta\in\Theta(U)$.
\item The closure of each Fatou component contains at most one periodic critical/postcritical internal ray.%If $r_{U}(\theta)$ and $r_U(\eta)$ are periodic critical/postcritical internal rays,  then $r_{U}(\theta)=r_{U}(\eta)$.
\end{enumerate}
\end{lemma}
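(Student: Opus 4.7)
My plan is to address the five parts in the order (2), (1), (3), (5), (4), since (2) underpins (1) and (3), and (4) is the most delicate.

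For (2), I rely on the commutative diagram from Section~\ref{hubbard-tree} linking $f|_U$ and the power map $z\mapsto z^{d_U}$ through the Riemann uniformization $\phi_U$. Under $f$, the center $\phi_U(0)$ of $U$ maps to the center of $f(U)$, while $\gamma(\theta)$ maps to $\gamma(\tau(\theta))$. Since $r_U(\theta)$ is the $\phi_U$-image of a closed radius of $\ov\D$, its $f$-image is the $\phi_{f(U)}$-image of the corresponding radius (transformed by the power map), and hence coincides with $r_{f(U)}(\tau(\theta))$ by uniqueness of internal rays in a Fatou component. Part (1) then follows by induction: for $\theta\in{\rm crit}(\Th_\FFF)$ one has $\theta\in\Theta(U_k)$ for some critical $U_k$, and $r_{U_k}(\theta)$ is critical by Definition~\ref{critical-internal-ray}; for $\theta=\tau^n(\xi)\in{\rm post}(\Th_\FFF)$ with $\xi\in{\rm crit}(\Th_\FFF)$, iterating (2) $n$ times produces the postcritical ray $r_{f^n(U_k)}(\theta)$.

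For (3), I combine two finiteness facts. The set of critical internal rays is finite since each $\Theta(U_k)$ is finite. By Proposition~\ref{define-portrait}(1), $\tau(\Theta(U_k))$ is a single angle, so after one iteration the orbit of any critical ray becomes single-valued. Since $f$ is postcritically finite, both the orbit of Fatou components under $f$ and the orbit of angles under $\tau$ (whose landing points lie in the finite union of ${\rm post}(f)$ and the periodic roots) must eventually repeat; combining this with (2) yields the eventual periodicity of each critical/postcritical internal ray. For (5), the key is again that $\tau$ collapses $\Theta(U)$ to a single point: the $\delta_U$ critical rays on $\overline U$ all share the same $\tau^p$-image (with $p$ the first-return period of $U$), so at most one of the angles in $\Theta(U)$ can satisfy $\tau^p(\xi)=\xi$; the other $\delta_U-1$ rays are strictly preperiodic under $f$. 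The same collapsing propagates the bound from critical to postcritical rays around each cycle.

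The hard part is (4). A critical ray $r_V(\xi)$ lies in $\overline V$, so if it lies in $\overline U$ we must have $V=U$ and thus $\xi\in\Theta(U)$, which is immediate. For a postcritical ray $r_U(\eta)=f^n(r_V(\xi))$ with $\xi\in\Theta(V)$, $n\geq 1$, and $f^n(V)=U$, I first use the simultaneous choice of preferred roots along the critical Fatou cycle to deduce $\gamma(\eta)=f^n(z_V)=z_U$, the preferred root of $U$. The crux is then to show $\tau(\eta)=\tau(\theta_U)$, i.e., that the representatives $\theta_{U'}$ chosen for the critical components $U'$ inside one cycle are automatically compatible under $\tau$ with one another. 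I expect that unwinding the ``simultaneous'' nature of the construction of Section~\ref{section-critical-portrait}, together with the fact that once an orbit enters the single-valued regime of $\tau$ it is pinned to a unique orbit of angles, will identify $\eta$ as a supporting angle at $z_U$ with $\tau$-image equal to $\tau(\theta_U)$, whence $\eta\in\Theta(U)$ by definition. This compatibility verification, tracking the interplay between the cycle structure of the Fatou components and the action of $\tau$ on the chosen supporting angles, is where I expect the bulk of the technical effort to lie.
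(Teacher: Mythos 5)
The paper offers no written proof of this lemma (it is asserted to follow ``by the construction''), so I can only judge your proposal on its own terms. Parts (1), (2) and (3) are handled correctly: the commutative diagram for $\phi_U$ gives $f(r_U(\theta))=r_{f(U)}(\tau(\theta))$, and eventual periodicity follows because along the orbit the pair (Fatou component, angle) ranges over a finite set, both coordinates being eventually periodic. Your mechanism for (5) is also the right one, though you should state explicitly why the $\delta_U-1$ critical rays of $U$ not landing at the preferred root $z_U$ never return to themselves: they land at preimages of $z_{f(U)}$ other than $z_U$, and the unique point of the preferred-root cycle mapping to $z_{f(U)}$ is $z_U$ itself, so after one step these rays are absorbed into the cycle through the preferred roots and each component of that cycle carries exactly one periodic ray.

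The genuine gap is in (4), and you flag it yourself: the ``compatibility verification'' is deferred rather than done. Two concrete points. First, you are aiming at a stronger and unnecessary target: you try to show the \emph{angle} $\eta=\tau^n(\xi)$ lies in $\Theta(U)$, but (4) is a statement about internal rays, and since $\phi_U:\ov{\D}\to\ov{U}$ is a homeomorphism for a bounded component, an internal ray of $U$ is determined by its landing point. It suffices to show that the landing point of $f^n(r_V(\xi))$ equals $\g(\theta)$ for some $\theta\in\Theta(U)$; for $V$ in the same critical cycle as $U$ this is immediate, because every critical ray of $V$ lands at a preimage of $z_{f(V)}$, so from the first iterate onward its landing point travels along the cycle of preferred roots and arrives in $\partial U$ exactly at $z_U=\g(\theta)$. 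No matching of the $\tau$-images of the chosen supporting angles is required, so the ``interplay'' you anticipate dissolves. Second, your analysis of (4) only covers $V$ in the same periodic cycle as $U$; you must also treat postcritical rays entering $\ov{U}$ from a strictly preperiodic critical component $V$ with $f^k(V)=U$. This is precisely what the second bullet of the construction of $\Theta(V)$ in Section \ref{section-critical-portrait} is engineered for: the point $z\in\partial V$ is chosen so that $f^k(z)=\g(\eta)$ with $\eta\in\Theta(U)$, and since every critical ray of $V$ lands at a preimage of $f(z)$, its $k$-th image is the internal ray of $U$ landing at $\g(\eta)$, i.e.\ the critical internal ray $r_U(\eta)$. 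Without invoking this clause, (4) would fail for an arbitrary choice of supporting rays on a preperiodic critical component, so this case cannot be omitted.
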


\subsection{The critical portrait}\label{critical} %and primitive majors

 The combinatorial information given in Proposition \ref{define-portrait} for weak critical markings of \pf polynomials may be presented in an abstract way, thus giving rise to the concept of  \emph{critical portrait}.

 Denote by $\D$ the unit disk. By abuse of notation, we will identify any point of $\partial\D$ with its argument in $\T$.
Then all angles in the circle are considered to be  mod 1.
 A \emph{leaf} is either one point in $\T$ (trivial) or the closure in $\overline \D$ of a hyperbolic chord (non-trivial).  For $x,y\in \T$, we use $\ov{xy}$ to denote the leaf joining $e^{2\pi i x}$ and $e^{2\pi i y}$. For any set $S\subset \T$, we denote by $hull(S)$ the hyperbolic convex hull in $\ov{\D}$ of $S$.

  \begin{definition}[critical portrait]\label{formal-critical-portrati}
  A degree $d$ \emph{critical portrait} is a finite collection of finite subsets of the unit circle  $ \mathbf{\Theta}=\{ \Theta_1, \cdots,  \Theta_s\}$
 such that  the properties (1), (2) and (3) in Proposition \ref{define-portrait} simultaneously hold.

%\begin{enumerate}
% \item for each $i$, the points in $ \Theta_i$ are identified by $\theta\mapsto \tau(\theta)$;

% \item each $\Theta_i$ is contained in the closure of a component of $\T\setminus \Theta_j$ for all $j\not=i$, and the intersection $\Theta_i\cap\Theta_j$ for any $i\not=j$ is either empty or contains only one angle.

 %\item each $\#\Theta_i\geq 2$, and $\sum_{i=1}^s(\#\Theta_i-1)=d-1$.\end{enumerate}
 \end{definition}

% \begin{example}\label{portrait}
% \end{example}
\begin{figure}
\begin{center}
 \includegraphics[scale=0.32]{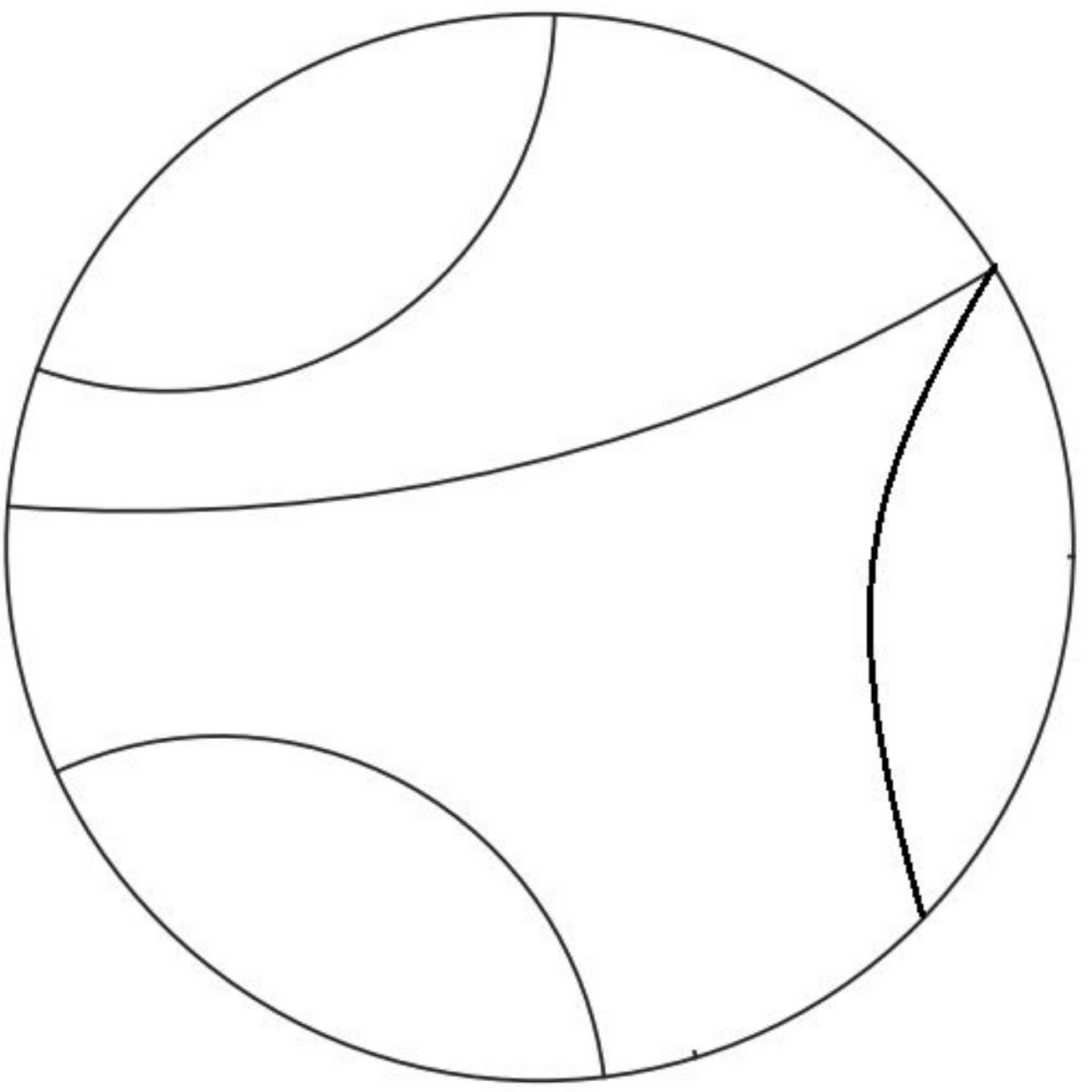}  \quad \includegraphics[scale=0.45]{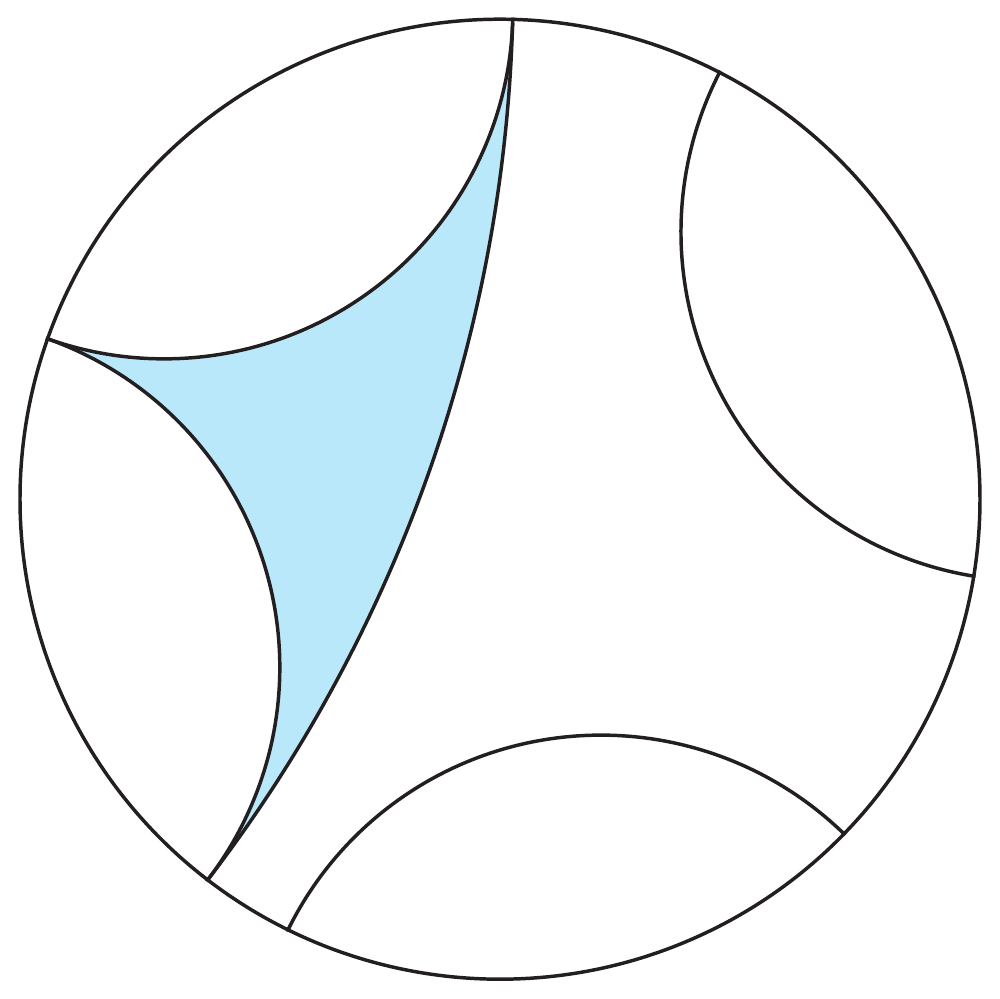}
  \caption{Two  critical portraits of degree $5$}\label{critical-portrait-5}
  \end{center}
\end{figure}

Thus,  any weak critical marking of a \pf polynomial is a critical portrait.
By identifying each $\Theta_i\in \Th$ with its convex hull, a critical portrait can be equivalently considered as  a collection of  leaves and polygons
pairwise disjoint in $\D$, each of whose vertices are identified under $z\mapsto z^d$, with total criticality $d-1$ (see Figure \ref{critical-portrait-5}).

 \begin{lemma}\label{equivalent}
 If $\mathbf{\Theta}=\{\Theta_1,\ldots,\Theta_s\}$ is a critical portrait of degree $d$, then $$\D\setminus\big(\cup_{i=1}^s hull(\Theta_i)\big)$$ has $d$ connected components, and
each one takes a total arc length $1/d$  on the unit circle.\vspace{-3pt}
\end{lemma}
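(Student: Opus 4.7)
The plan is to establish the two claims—the component count and the arc length—separately, using Euler's formula for the first and a winding-number argument for the second.

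First, I would count the components via Euler's formula applied to the planar graph inscribed in $\bar \D$ whose vertex set is $V=\bigcup_i \Theta_i \subset \T$, whose edges consist of the $N=\sum_i \#\Theta_i$ circular arcs of $\T\setminus V$ together with the chord-edges bounding each $hull(\Theta_i)$ (namely $\#\Theta_i$ edges when $\#\Theta_i\geq 3$, and one chord when $\#\Theta_i=2$), and whose internal faces are the polygon interiors and the complementary regions sought. Substituting into $V-E+F=2$ and invoking condition $(3)$ of Proposition~\ref{define-portrait} (which gives $\sum_i(\#\Theta_i-1)=d-1$), a direct computation yields that the number of complementary regions is exactly $1+(d-1)=d$.

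Second, for the arc-length claim, fix a component $C_k$ and let $L_k=|\partial C_k\cap\T|$. I would traverse $\partial C_k$ counterclockwise and observe that its boundary alternates between arcs of $\T$ and chord-edges, each such chord joining two points of a common $\Theta_i$ and hence sharing a single $\tau$-image by condition $(1)$ of Proposition~\ref{define-portrait}. Applying $\tau$ to the $\T$-part of $\partial C_k$ therefore produces a continuous closed loop $\gamma_k\subset\T$, traversed monotonically counterclockwise, of total length $d\,L_k$. Since $\gamma_k$ is a closed loop on $\T$, this length equals its winding number $w_k\in\Z_{\geq 0}$, so $d L_k=w_k$. Summing and using that the arcs of $\T\setminus V$ partition $\T$ up to a finite set, with each arc on exactly one $\partial C_k$, we obtain $\sum_k L_k=1$ and hence $\sum_k w_k=d$.

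The remaining—and genuinely delicate—step will be showing $w_k\geq 1$ for every $k$: then having $d$ nonnegative integers summing to $d$ with each at least $1$ forces every $w_k=1$, whence $L_k=1/d$. This reduces to $L_k>0$, i.e., to showing that no complementary region is enclosed entirely by polygon edges. I would argue this by a local analysis at each polygon vertex on $\T$: because the $hull(\Theta_i)$'s are pairwise disjoint in the interior of $\D$ and meet $\T$ only at their vertices, the boundary of every $C_k$ is forced to include at least one nondegenerate circular arc, which is also consistent with the fact (derivable from the winding-count $\sum w_k=d$) that a putative "interior" gap would have $w_k=0$ and thus force some other $w_{k'}>1$, contradicting the uniform distribution of the $d$ generic $\tau$-preimages among the $d$ components. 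Once the positivity of each $L_k$ is secured, the equality $L_k=1/d$ follows from the winding-number identity.
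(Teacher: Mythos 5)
Your two main devices are both sound and, in fact, sharper than what the paper does: the paper obtains the count $d$ by chopping $\D$ inductively with one $hull(\Theta_i)$ at a time (your Euler computation is an equivalent but different bookkeeping), and for the lengths it merely asserts that each component meets $\T$ in a set of total length a multiple of $1/d$; your identity $dL_k=w_k$ together with $\sum_k w_k=d$ makes that divisibility precise and isolates exactly what is missing. The problem is that the missing step --- $w_k\ge 1$, equivalently $L_k>0$, for every component --- is a genuine gap, and the ``local analysis at each polygon vertex'' you propose cannot close it. Definition \ref{formal-critical-portrati} only forbids two hulls from meeting outside $\T$ (condition (2) of Proposition \ref{define-portrait}); it permits a closed chain of leaves, each sharing one circle point with the next, and such a chain bounds a component whose boundary contains no nondegenerate arc. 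Concretely, for $d=4$ take $\mathbf{\Theta}=\bigl\{\{0,\tfrac{1}{4}\},\{\tfrac{1}{4},\tfrac{1}{2}\},\{\tfrac{1}{2},0\}\bigr\}$: all three angles have $\tau$-image $0$, the hulls meet pairwise in single points of $\T$, and $\sum_i(\#\Theta_i-1)=3=d-1$, so this is a degree-$4$ critical portrait. Its three chords form an inscribed triangle, $\D$ minus their union has the required $4$ components, but the arc lengths are $\tfrac{1}{4},\tfrac{1}{4},\tfrac{1}{2},0$. So the step you flag as ``genuinely delicate'' is actually false at the stated level of generality, and the second conclusion of the lemma fails with it.

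Your fallback justification (``a putative interior gap would \ldots contradict the uniform distribution of the $d$ generic $\tau$-preimages among the $d$ components'') is circular: that uniform distribution is precisely the statement $L_k=1/d$ being proved. To be fair, the paper's own proof has the identical defect --- it passes from ``$d$ nonnegative multiples of $1/d$ summing to $1$'' to ``each equals $1/d$'' without excluding the value $0$ --- so your argument is not worse; it simply makes visible where an extra hypothesis or an extra argument is needed. Everything you wrote does go through once one knows that no closed chain of hulls occurs (for instance when the $hull(\Theta_i)$ are pairwise disjoint); for weak critical markings of actual polynomials, where hulls may share circle points (Example \ref{no-well}), one needs a separate dynamical argument that such closed chains cannot arise. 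Without that input, both your proof and the lemma as stated are incomplete.
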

\begin{proof}
The set $hull(\Theta_1)$ chops $\D$ into $\#\Theta_1$ regions. One of them contains $hull(\Theta_2)$,
and is chopped by $hull(\Theta_2)$ into $\#\Theta_2$ regions. As a result, $hull(\Theta_1)$ and $hull(\Theta_2)$ together chop $\D$ into $\#\Theta_1 + (\#\Theta_2 - 1)$ regions. If we chop
$hull(\Theta_3), \ldots, hull(\Theta_s)$ consecutively, at the end we get that the union of convex hulls of $\Theta_i$'s chop $\D$ into
 $\#\Theta_1 + \sum_{i = 2}^s (\#\Theta_i - 1)$ regions, and the condition (3) says this number is exactly $d$.

 On the other hand, each of these $d$ regions touches the boundary circle in arcs whose total length is a multiple of $1/d$, since we know $\Theta_i$ decomposes $\T$ into arcs each of which has length a multiple of $1/d$. Now we have the union of $d$  arcs each of which has total length a multiple of $1/d$,
so each one must be of length exactly $1/d$ in order to sum up to $1$.
\end{proof}

%\noindent {\bf Remark.}
 %The critical portrait defined here is more general than that in \cite[Chp.I, 3.5]{Poi1}.
Let $\mathbf{\Theta}=\{\Theta_1,\ldots,\Theta_s\}$ be a critical portrait of degree $d\geq2$.
%The set ${\rm crit}(\Th)$ is  defined as ${\rm crit}(\mathbf{\Theta}):=\cup_{i=1}^s\Theta_i$.

\begin{definition}[unlinked equivalent on $\T$] Two points $x,y\in\T\setminus\cup_{i=1}^s\Theta_i$ are called \emph{unlinked on $\T$} relatively to $\mathbf{\Theta}$ if they belong to a common component of $\T\setminus \Theta_i$ for all  $\Theta_i\in \mathbf{\Theta}$.
\end{definition}
This unlinked relation is easily checked to be an equivalence relation. Together with Lemma \ref{equivalent}, we immediately deduce  the following result.
\begin{proposition}\label{common}
\begin{enumerate}
\item Two points $x,y\in\T\setminus\cup_{i=1}^s\Theta_i$ are unlinked on $\T$ if and only if they belong to a common complementary component of $ \ov{\D}\setminus\big(\cup_{i=1}^s hull( \Theta_i)\big)$.
\item There are $d$ unlinked equivalence classes on $\T$, denoted by $I_1,\ldots, I_d$. For each $I_k$ and any angles $x,y\in \ov{I_k}$, $\tau(x)=\tau(y)$ if and only if there exist $\Theta_{i_1},\ldots,\Theta_{i_t}\in\Th$ with $t\geq1$ such that $x\in\Theta_{i_1},y\in \Theta_{i_t}$ and $\Theta_{i_j}\cap\Theta_{i_{j+1}}\not=\emptyset$ for all $j=1,\ldots,t-1$.
\end{enumerate}
\end{proposition}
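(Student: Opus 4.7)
The plan is to prove parts (1) and (2) in sequence, using induction for (1) and a winding-number argument for the hard direction of (2).

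\textbf{For part (1)}, the direction ($\Leftarrow$) is immediate: if $x,y$ lie in a common component $W$ of $\overline{\D}\setminus\bigcup_{i} hull(\Theta_i)$, then for each $i$ the connected set $W$ is contained in a single component of $\overline{\D}\setminus hull(\Theta_i)$ whose intersection with $\T$ is one arc of $\T\setminus\Theta_i$, containing both $x$ and $y$. For ($\Rightarrow$) I would induct on $s=\#\Th$, with the trivial base $s=0$. By the almost-disjointness in Proposition \ref{define-portrait}(2), the polygon $hull(\Theta_s)$ lies in the closure of a unique component $W^*$ of $\overline{\D}\setminus\bigcup_{i<s}hull(\Theta_i)$; adjoining $hull(\Theta_s)$ leaves every other component unchanged and splits $W^*$ into $\#\Theta_s$ sub-components in bijection with the arcs of $(W^*\cap\T)\setminus\Theta_s$. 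If $x,y$ are unlinked relative to $\Th$, they are in particular unlinked relative to $\{\Theta_1,\ldots,\Theta_{s-1}\}$, so by induction they share a component $W$; if $W\neq W^*$ the conclusion is immediate, and if $W=W^*$ their being unlinked with respect to $\Theta_s$ places them in the same sub-component.

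\textbf{For part (2)}, the count of $d$ classes follows immediately from (1) combined with Lemma \ref{equivalent}. The easy direction ($\Leftarrow$) of the chain characterization is direct: for any $v\in\Theta_{i_j}\cap\Theta_{i_{j+1}}$ one has $\tau(v)=\tau(\Theta_{i_j})=\tau(\Theta_{i_{j+1}})$, since $\tau$ is constant on each $\Theta_i$, so all the singletons $\tau(\Theta_{i_j})$ coincide and $\tau(x)=\tau(y)$.

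For the substantive direction ($\Rightarrow$), let $W$ be the complementary component with $W\cap\T=I_k$. Using almost-disjointness and simple connectivity of $W$, the boundary $\partial W$ is a Jordan curve made of alternating arcs of $\T$ and chords, each chord being a side of some hull $hull(\Theta_{l_j})$, and each hull contributing at most one such side (since $W$ lies in one component of $\overline{\D}\setminus hull(\Theta_{l_j})$). Define a continuous map $\hat\tau:\partial W\to\T$ by $\hat\tau=\tau$ on arcs of $\T\cap\partial W$ and $\hat\tau\equiv\tau(\Theta_{l_j})$ on each chord of $hull(\Theta_{l_j})$; continuity at each vertex $v\in\Theta_{l_j}$ of $\partial W$ follows from $\tau(v)=\tau(\Theta_{l_j})$, and at a vertex shared by several hulls these constants all agree for the same reason. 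The total angular change of $\hat\tau$ around $\partial W$ equals $d\cdot\mathrm{length}(W\cap\T)=d\cdot(1/d)=1$ by Lemma \ref{equivalent}, with arcs contributing $d\ell$ (positively, by the natural orientation of $\partial W$) and chords contributing $0$. Now for $x,y\in\overline{I_k}$ with $\tau(x)=\tau(y)$, traversing $\partial W$ from $x$ to $y$ along one direction contributes $d\ell_1$ modulo $1$ to the $\hat\tau$-change, where $\ell_1$ is the total arc length on that piece; equality of endpoint values forces $d\ell_1\in\Z$, and together with $\ell_1+\ell_2=1/d$ this gives $\ell_1\in\{0,1/d\}$. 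In the case $\ell_1=0$, the path from $x$ to $y$ in that direction consists solely of chords and vertices, visiting a sequence of hulls $\Theta_{l_1},\ldots,\Theta_{l_t}$ with $x\in\Theta_{l_1}$, $y\in\Theta_{l_t}$, and consecutive hulls necessarily sharing a vertex (as no arc separates them along this path) — exactly the desired chain. The case $\ell_1=1/d$ is handled symmetrically using the reverse direction.

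The main obstacle is justifying the winding-number setup: one needs $\partial W$ to be a simple Jordan curve (from almost-disjointness and simple connectivity), the arcs of $\T$ in $\partial W$ to all be traversed in the positive sense by the natural orientation of $\partial W$ as the boundary of $W\subset\overline{\D}$, and $\hat\tau$ to remain continuous at vertices shared by three or more hulls. Once these technicalities are in place, the sharp dichotomy $\ell_1\in\{0,1/d\}$ is exactly what forces one of the two $\partial W$-paths from $x$ to $y$ to consist entirely of chords and thereby produce the chain.
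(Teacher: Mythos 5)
The paper offers no proof of this proposition at all---it is stated as something one ``immediately deduces'' from Lemma \ref{equivalent}---so your write-up supplies an argument where the paper has none. Your treatment of part (2) is the valuable piece: the winding-number argument is correct. Setting $\hat\tau$ equal to $\tau$ on the circular arcs of $\partial W$ and equal to the constant $\tau(\Theta_{l_j})$ on each polygon side, the two lifts along the two boundary paths from $x$ to $y$ change by $d\ell_1$ and $d\ell_2$ respectively, both are forced to be non-negative integers by $\tau(x)=\tau(y)$, and Lemma \ref{equivalent} gives $d\ell_1+d\ell_2=1$; hence one path carries no circular arc and consists entirely of consecutive polygon sides, which is precisely the chain. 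The technicalities you flag ($\partial W$ a Jordan curve, positive traversal of the arcs of $\T$, continuity of $\hat\tau$ at vertices shared by several hulls, each hull contributing at most one side to $\partial W$) all follow from Proposition \ref{define-portrait}(2), and the same dichotomy also shows, as it must, that $\tau(x)=\tau(y)$ is impossible for $x\neq y$ when one of them lies in the open class $I_k$.

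There is, however, a genuine soft spot in your induction for (1)($\Rightarrow$). First, the claimed bijection between the sub-components of $W^*\setminus hull(\Theta_s)$ and the arcs of $(W^*\cap\T)\setminus\Theta_s$ is false in general: $W^*\cap\T$ may already consist of several arcs, so $(W^*\cap\T)\setminus\Theta_s$ can have strictly more than $\#\Theta_s$ arcs, while $W^*$ is cut into only $\#\Theta_s$ pieces (a single piece may meet $\T$ in several arcs). Second, and more seriously, the closing sentence---that being unlinked with respect to $\Theta_s$ places $x,y$ in the same sub-component---is exactly the assertion that $W^*\cap C$ is connected for each component $C$ of $\ov{\D}\setminus hull(\Theta_s)$; this is the entire content of the implication being proved, and it is asserted rather than argued (an intersection of two connected open sets is not connected in general). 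The repair is short and makes the induction unnecessary: if $x,y\in\T\setminus\cup_i\Theta_i$ lie in the same component of $\T\setminus\Theta_i$ for every $i$, then the chord $\ov{xy}$ is disjoint from every $hull(\Theta_i)$. Indeed, $\Theta_i$ lies in the closed half-disk on one side of the line through $x$ and $y$, so a point of $hull(\Theta_i)$ lying on that line would be a convex combination of points of $\Theta_i$ on the line, i.e.\ of points of $\Theta_i\cap\{x,y\}=\emptyset$. The chord is therefore a connected subset of $\ov{\D}\setminus\bigl(\cup_i hull(\Theta_i)\bigr)$ joining $x$ to $y$. With that substitution for part (1), your proof is sound.
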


\subsection{Partitions in the dynamical plane induced by weak critical markings}\label{partition-in-f}

In this part, we fix a weak critical marking
 \[\mathbf{\Theta}=\{\Theta_1(c_1),\ldots, \Theta_m(c_m);\ \Theta(U_1),\ldots,\Theta(U_n)\}\triangleq\{\Theta_1,\ldots,\Theta_{m+n}\}\]
 of a \pf polynomial $f$ of degree $d$. As shown in the last section, it induces a partition $I_1,\ldots, I_d$ on the unit circle. We will introduce in this section a corresponding  partition on the dynamical plane of $f$, see also \cite[Section II.2]{Poi1} and \cite[Section 5]{Z}.

Firstly, for each $c_j$ with $j\in\{1,\ldots,m\}$ and each critical Fatou component $U$ of $f$, we define the sets $\RRR_j(c_j)$ and $\RRR(U)$ in the dynamical plane corresponding to $\Theta_j(c_j)$ and $\Theta(U)$ respectively, such that $\RRR_j(c_j)$ is the union of the external rays with angles in $\Theta_j(c_j)$ and the point $c_j$, i.e.,
\[\RRR_j(c_j):=\big(\cup_{\theta\in\Theta_j(c_j)}\RRR(\theta)\big)\cup\{c_j\},\]
and $\RRR(U)$ is the union of extended rays at $U$ with angles in $\Theta(U)$, i.e.,
\[\RRR(U):=\cup_{\theta\in\Theta(U)}\EEE_U(\theta).\]
According to the construction of $\Theta_j(c_j)$ and $\Theta(U)$ in Section \ref{section-critical-portrait}, we get the following properties of $\RRR_j(c_j)$ and $\RRR(U)$.
\begin{lemma}\label{property-R(c)}
\begin{enumerate}
\item Each of $\RRR_j(c_j)$ and $\RRR(U)$ is star-like with a critical point in the center. Moreover, $\RRR_j(c_j)\cap K_f=\{c_j\}$ and $\RRR(U)\cap K_f$ consists of critical internal rays in $U$.
\item the intersection of $\RRR_j(c_j)$ and $\RRR_k(c_k)$ with $j\not=k\in\{1,\ldots,m\}$ is equal to $\{c_j\}$ if $c_j=c_k$, and an empty set otherwise.
\item If $\RRR_j(c_j)\cap \RRR(U)\not=\emptyset$, then $c_j\in\partial U$, and the intersection is either $\{c_j\}$ or the union of $\{c_j\}$ and an external ray $\RRR(\theta)$ landing at $c_j$. The latter case happens if and only if $\Theta_j(c_j)\cap \Theta(U)=\{\theta\}$.
\item If $\RRR(U)\cap \RRR(U')\not=\emptyset$ for distinct critical Fatou components $U,U'$, then the intersection is either a point $\{p\}:=\partial U\cap \partial U'$, or the union of $\{p\}$ and an external ray $\RRR(\theta)$ landing at $p$. The latter case happens if and only if $\Theta(U)\cap\Theta(U')=\{\theta\}$.
\end{enumerate}
\end{lemma}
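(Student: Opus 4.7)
The plan is to treat each of the four statements by unpacking the definitions of $\RRR_j(c_j)$ and $\RRR(U)$ in terms of external and internal rays, and then using the elementary facts that distinct external rays are disjoint, that internal rays of different Fatou components are disjoint, and that an external and an internal ray can meet only at the common landing point on the Julia set. The combinatorial control comes from Proposition \ref{define-portrait}(2), which tells us that the convex hulls of two distinct elements of $\Th$ meet in at most one point of $\T$.

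For (1): Each $\RRR(\theta)$ with $\theta\in\Theta_j(c_j)$ is a Jordan arc in $U(\infty)$ landing at $c_j\in J_f$, so $\RRR_j(c_j)$ is the union of finitely many arcs meeting only at their common endpoint $c_j$; this is star-like with $c_j$ at the center. Since $\RRR(\theta)\cap K_f=\{\g(\theta)\}$ for every $\theta$, we have $\RRR_j(c_j)\cap K_f=\{c_j\}$. For $\RRR(U)$, each $\EEE_U(\theta)=r_U(\theta)\cup \RRR(\theta)$ passes through the center $\phi_U(0)$ of $U$, and distinct internal rays of $U$ are disjoint away from this center, so the extended rays are arcs meeting only at the center of $U$. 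The set $\RRR(U)\cap K_f$ consists of the closed internal rays $r_U(\theta)$ together with the landing points $\g(\theta)$, which is exactly the union of the critical internal rays in $U$ by Definition \ref{critical-internal-ray}.

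For (2): If $c_j\neq c_k$, every external ray in $\RRR_j(c_j)$ lands at $c_j$ and every external ray in $\RRR_k(c_k)$ lands at $c_k$. Distinct external rays are disjoint subsets of $U(\infty)$, so the two sets are disjoint. If $c_j=c_k$, condition (4) of Definition \ref{weak-critical-marking} says $hull(\Theta_j(c_j))\cap hull(\Theta_k(c_k))=\emptyset$, so $\Theta_j(c_j)\cap\Theta_k(c_k)=\emptyset$, and therefore no external ray is common; the rays of $\RRR_j$ and $\RRR_k$ land at the same point $c_j$ but are otherwise disjoint, yielding intersection $\{c_j\}$.

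For (3): Suppose $p\in\RRR_j(c_j)\cap \RRR(U)$. If $p=c_j$ or $p$ is on some external ray of $\RRR_j(c_j)$, then $p\notin U$ (recall $c_j\in J_f$ and the external rays are in $U(\infty)$), so $p$ must lie on either an external ray $\RRR(\theta')$ of $\RRR(U)$ or on the landing point $\g(\theta')\in\partial U$. Since distinct external rays are disjoint and an external ray meets an internal ray of $U$ only at the common landing point on $\partial U$, the only possibilities are (a) $p=c_j=\g(\theta')\in\partial U$ for some $\theta'\in\Theta(U)$, or (b) $p$ lies on a common external ray, i.e.\ there is some $\theta\in\Theta_j(c_j)\cap\Theta(U)$. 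In case (b), $\RRR(\theta)$ lands at both $c_j$ and at a point of $\partial U$, so $c_j\in\partial U$. Proposition \ref{define-portrait}(2) applied to $\Theta_j(c_j)$ and $\Theta(U)$ forces $\#\bigl(\Theta_j(c_j)\cap\Theta(U)\bigr)\le 1$, so the common ray is unique, and the intersection is exactly $\{c_j\}\cup \RRR(\theta)$. Conversely, if $\Theta_j(c_j)\cap\Theta(U)=\{\theta\}$ then $\RRR(\theta)\cup\{c_j\}\subset\RRR_j(c_j)\cap\RRR(U)$, and arguing as above no further intersection occurs.

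For (4): The analysis is analogous. Distinct internal rays belong to disjoint Fatou components, so the only way to have $\RRR(U)\cap\RRR(U')\neq\emptyset$ is through a shared external ray or a shared landing point on $\partial U\cap\partial U'$. A shared external ray means $\Theta(U)\cap\Theta(U')\ni\theta$, and again by Proposition \ref{define-portrait}(2) there is at most one such $\theta$; in that case $\RRR(\theta)$ lands at the common boundary point $p=\g(\theta)\in\partial U\cap\partial U'$, and the intersection is precisely $\{p\}\cup\RRR(\theta)$. Otherwise the intersection reduces to the single point $p=\partial U\cap\partial U'$.

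The only nontrivial obstacle is the case analysis in parts (3) and (4), but this is entirely routine once one uses the bound $\#\bigl(\Theta_i\cap\Theta_j\bigr)\le 1$ from Proposition \ref{define-portrait}(2) together with the disjointness properties of external and internal rays recalled in Section \ref{hubbard-tree}.
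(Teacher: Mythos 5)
Your proposal is correct. Note that the paper itself offers no proof of this lemma --- it is stated with only the remark that it follows ``according to the construction'' of $\Theta_j(c_j)$ and $\Theta(U)$ --- so your write-up is precisely the routine verification the paper leaves implicit, and it uses the right ingredients: disjointness of distinct external rays, disjointness of internal rays of distinct components, the fact that external rays meet $K_f$ only at their landing points, condition (4) of Definition \ref{weak-critical-marking} for part (2), and Proposition \ref{define-portrait}(2) to bound $\#(\Theta_i\cap\Theta_j)$ by one in parts (3) and (4). Two cosmetic points: with the paper's convention an external ray is the image of an \emph{open} radial line, so $\RRR(\theta)\cap K_f=\emptyset$ rather than $\{\g(\theta)\}$ (your conclusion $\RRR_j(c_j)\cap K_f=\{c_j\}$ is unaffected since $c_j$ is adjoined by definition); and in part (4) you tacitly use that $\partial U\cap\partial U'$ is at most a single point, which the lemma's own statement already presupposes in writing $\{p\}:=\partial U\cap\partial U'$.
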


For each $\Theta\in \Th$, we simply denote
\begin{equation}\label{eq:1}
\RRR(\Theta):=\left\{
                  \begin{array}{ll}
                    \RRR_j(c_j), & \hbox{if $\Theta=\Theta_j(c_j)$ with $j\in\{1,\ldots,m\}$;} \\[5pt]
                    \RRR(U), & \hbox{if $\Theta=\Theta(U)$ with $U$ a critical Fatou component.}
                  \end{array}
                \right.
\end{equation}

%\begin{lemma}\label{connect}
%If $\theta,\eta\in\Th$ are $\equiv_\Th$-equivalent, then the landing points of $\RRR(\theta)$ and $\RRR(\eta)$ are joined by critical internal rays.
%\end{lemma}
%\begin{proof}
%By (3), (4) of Lemma \ref{property-R(c)}, for any $\Theta_i,\Theta_j\in \Th$ with $\Theta_i\cap\Theta_j\not=\emptyset$, we get that the set
%\[(\RRR(\Theta_i)\cap K_f)\cup(\RRR(\Theta_j)\cap K_f)\]
%is connected. Therefore, assuming that $\theta$ and  $\eta$ is joined by a chain $\Theta_{k_1},\ldots,\Theta_{k_r}$, then the set
%\[\cup_{i=1}^r(\Theta_{k_i}\cap K_f),\]
%which consists of critical internal rays, is connected and joins $\g(\theta)$ and $\g(\eta)$.
%\end{proof}

\begin{definition}[unlinked equivalence in the $f$-plane]\label{unlink-f}
We say that two points $z_1,z_2$ of $\C\setminus \bigcup_{i=1}^{m+n}\RRR(\Theta_i)$ are \emph{unlinked equivalent in the $f$-plane} if they belong to a common connected component of $\C\setminus \RRR(\Theta_i)$  for every $\Theta_i\in\Th$.
\end{definition}

Looking at the circle at infinity we immediately derive that %the following facts, one can also refer to \cite[Chp.II, \S\ 1, 2]{Poi1}.
two external rays $\RRR(\theta)$ and $\RRR(\eta)$  are in a common unlinked equivalence class in the $f$-plane  if and only if $\theta$ and $\eta$ are in a common unlinked equivalence class on $\T$. This provides a canonical correspondence between the unlinked equivalence classes on $\T$ and the ones in the $f$-plane. We then
denote as  $$ V_1,\ldots, V_d$$ the unlinked equivalence classes in the $f$-plane with $ V_k$ corresponding to $I_k, k\in\{1,\ldots,d\}$ (see Figure \ref{partition}). By Lemma \ref{property-R(c)} and Definition \ref{unlink-f}, the boundary of each $V_k$ consists of internal/external rays, and $\partial V_k\cap K_f$ consists of either critical points in $J_f$ or critical internal rays.
 \begin{figure}
\begin{tikzpicture}
\node at (-4,0) {\includegraphics[width=6cm]{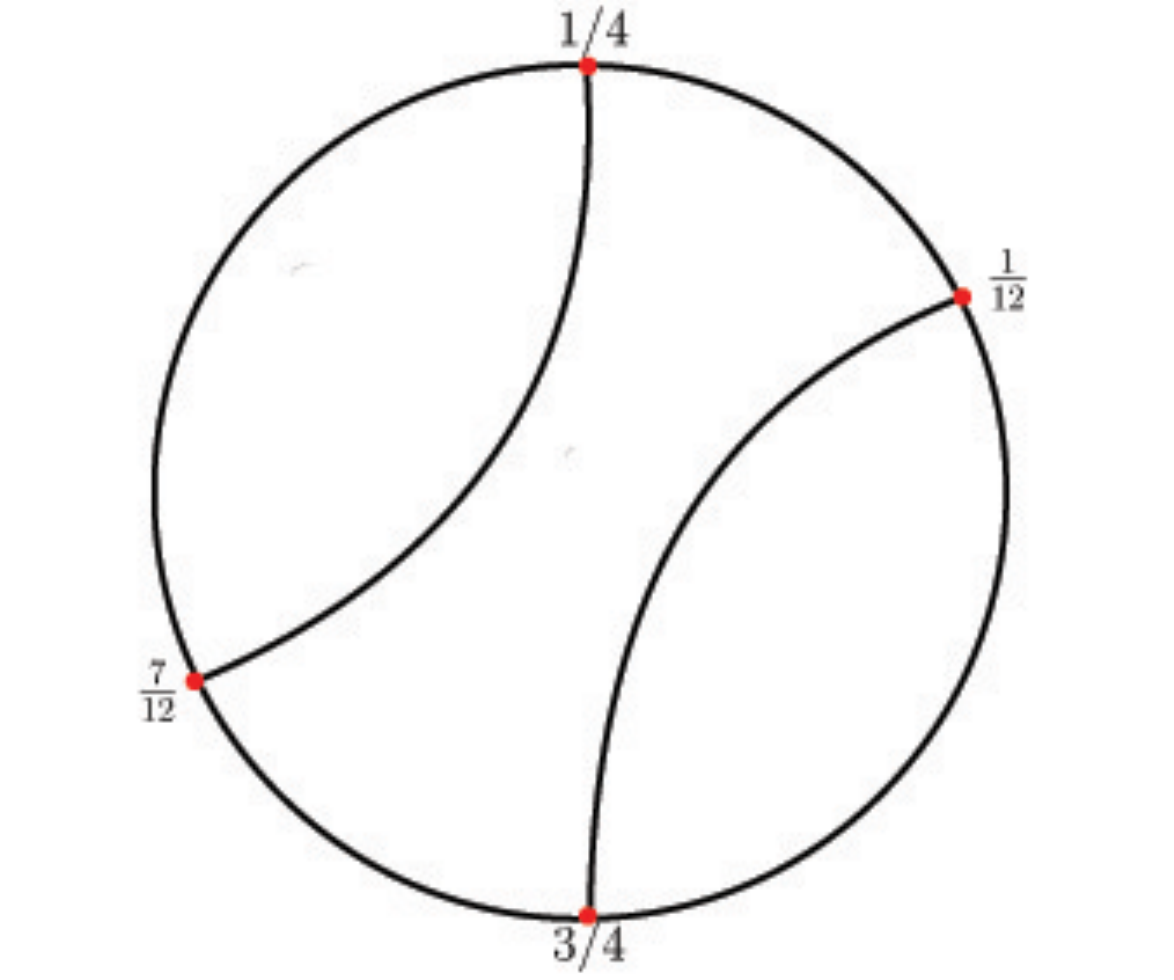}};
\node at (3.5,0) {\resizebox{8cm}{4.5cm}{\includegraphics{}}};
\node at (3.1,2){\footnotesize{$\RRR(\frac{1}{4})$}};
\node at (7.2,2){\footnotesize{$\RRR(\frac{1}{12})$}};
\node at (0.85,-2){\footnotesize{$\RRR(\frac{7}{12})$}};
\node at (3.85,-2){\footnotesize{$\RRR(\frac{3}{4})$}};
\node at (2,0){\footnotesize{$U_A$}};
\node at(5,0){\footnotesize{$U_B$}};
\node at(0,1.25){\footnotesize{$V_1$}};
\node at(2.5,-2){\footnotesize{$V_2$}};
\node at(4.5,2){\footnotesize{$V_2$}};
\node at(6.5,-1.5){\footnotesize{$V_3$}};
\node at(-5.5,1){\footnotesize{$I_1$}};
\node at (-4,0){\footnotesize{$I_2$}};
\node at (-2.5,-1){\footnotesize{$I_3$}};
%\ruler{8}{2}
\end{tikzpicture}
 \caption{This example comes from \cite[Section I.4.4]{Poi1}. The cubic polynomial $f(z)=z^3-\frac{3}{2}z$ has a critical portrait $\Th=\big\{\Theta(U_A)=\{\frac{1}{4},\frac{7}{12}\},\Theta(U_B)=\{\frac{3}{4},\frac{1}{12}\}\big\}$. It induces a partition on the unit circle (the left figure), and the corresponding partition in the $f$-plane.}\label{partition}
\end{figure}

\begin{lemma}\label{injective-f}
Fix any unlinked equivalence class $ V$ in the $f$-plane and denote its closure by $\ov{ V}$. Then:
\begin{enumerate}
\item For two distinct points $z_1,z_2\in \ov{ V}\cap K_f$, the regulated arc $[z_1,z_2]\subset \ov{ V}$.
\item The polynomial $f$ maps $ V$ bijectively onto $\C\setminus f(\partial V)$.
 %which is a complement of a finite number of external rays and internal rays.
\item If $f(z)=f(w)$ for  $z\not=w\in \ov{ V}\cap K_f$, then the regulated arc $[z,w]$ belongs to $\partial  V$.
\item For any $z,w\in \ov{ V}\cap K_f$, the restriction of $f$ on $[z,w]\setminus\partial  V$ is injective, and its image is contained in a regulated arc of $f([z,w])$.
\end{enumerate}
\end{lemma}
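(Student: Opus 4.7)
My plan is to prove parts (1)--(4) in order, with (1) and (2) serving as the foundation for (3) and (4).

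For part (1), I would exploit the structure of regulated arcs together with Lemma~\ref{property-R(c)}. Since $[z_1,z_2]\subset K_f$ and the regulated arc meets each closed Fatou component $\overline U$ in at most two internal-ray segments joined at the center of $U$, any intersection of $[z_1,z_2]$ with $\RRR(\Theta_i)$ must occur at points of $\RRR(\Theta_i)\cap K_f$. By Lemma~\ref{property-R(c)}, these points are either the critical point $c_j$ (when $\Theta_i=\Theta_j(c_j)$) or pieces of critical internal rays contained in some $\overline U$ (when $\Theta_i=\Theta(U)$). A case analysis then shows that $[z_1,z_2]$ cannot pass from one sector of $\C\smm\RRR(\Theta_i)$ to another: at $c_j$ the two tangent directions of the arc approach via landings of external rays or from a single Fatou component, hence lie in one sector bounded by $\RRR_j(c_j)$; inside $\overline U$, the at most two internal-ray segments of $[z_1,z_2]\cap\overline U$ fit within one sector of $\overline U\smm\RRR(U)$ by the star-like structure of $\RRR(U)\cap\overline U$.

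For part (2), the plan is a branched-covering argument. The critical points of $f$ are centers of the star-like sets $\RRR_j(c_j)$ and $\RRR(U)$, all contained in $\bigcup_i\RRR(\Theta_i)$, so $f|_V$ is a local homeomorphism. Since $f$ is proper on $\C$, the restriction $f|_V$ is a finite-sheeted unramified covering onto its image. The number of sheets is computed at infinity: the corresponding arc $I_k\subset\T$ has length $1/d$ (Lemma~\ref{equivalent}) and $\tau$ is injective on $I_k$, so $f|_V$ has degree $1$ and is a homeomorphism onto $f(V)$. To identify $f(V)$ with $\C\smm f(\partial V)$, one checks $\partial f(V)\subset f(\partial V)$ (from continuity and properness), the connectedness of $\C\smm f(\partial V)$ (each $f(\RRR(\Theta_i))$ being a single extended or external ray), and concludes since $f(V)$ is then open and relatively closed inside a connected set.

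For parts (3) and (4), I first observe that by (2), if $z\in V$ then $f(z)\in\C\smm f(\partial V)$, which forces $w\in V$ and hence $z=w$, a contradiction; so both $z,w\in\partial V$, and by (1) the arc $[z,w]$ lies in $\overline V$. For (3), suppose for contradiction that some point of $[z,w]$ lies in $V$, and take a maximal open sub-arc $\gamma((t_1,t_2))$ through it contained in $V$, so that $\gamma(t_1),\gamma(t_2)\in\partial V$. By (2), $f\circ\gamma$ is injective on $(t_1,t_2)$ with image in $\C\smm f(\partial V)$, while the endpoint values $f(\gamma(t_1))$ and $f(\gamma(t_2))$ lie in $f(\partial V)$. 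Combining this with Proposition~\ref{tree-map} and the loop identity $f(\gamma(0))=f(\gamma(1))$ through combinatorial analysis of all such excursions yields a contradiction. For (4), injectivity of $f|_{[z,w]\smm\partial V}$ is immediate from (2) combined with (1); the image statement follows by applying Proposition~\ref{tree-map} on each connected component of $[z,w]\smm\partial V$, on which $f$ is injective hence monotone, and fitting the resulting sub-arcs together as a regulated sub-arc of $f([z,w])$.

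The main obstacle is expected to be part (1): the intuitive claim that a regulated arc stays within one unlinked class requires a careful case analysis because the arc may touch $\RRR(\Theta_i)$ at critical points or along internal rays without actually crossing to a different sector, and distinguishing these possibilities demands precise bookkeeping of tangent directions near $c_j$ and of internal-ray-segment endpoints inside critical Fatou components. Part (3) is also delicate, as the contradiction from an internal excursion requires tracking multiple maximal sub-arcs simultaneously against the loop identity $f(z)=f(w)$.
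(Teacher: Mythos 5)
Your part (1) is workable and close to the paper's argument (the paper phrases it as: $\ov V=\bigcap_i\ov{Y_i}$ where $Y_i$ is the component of $\C\setminus\RRR(\Theta_i)$ containing $V$, and $[z_1,z_2]\subset\ov{Y_i}$ because $\partial Y_i\cap K_f$ is only a point of $J_f$ or two internal rays). The serious problems are in (2), and in (3)--(4) you defer to placeholders exactly where the content lies. For (2): the restriction of a proper map to an open set is in general \emph{not} a covering onto its image (for $z\mapsto z^2$ on $\C\setminus[0,+\infty)$ the fibres over the image have cardinality $1$ or $2$), so ``finite-sheeted unramified covering onto its image'' does not follow from properness of $f$ on $\C$ plus the absence of critical points in $V$. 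To make any degree count work you must first know $f(V)\cap f(\partial V)=\emptyset$ --- which is also needed merely for the statement that $f$ maps $V$ \emph{into} $\C\setminus f(\partial V)$ --- and your sketch never addresses this. The paper proves it by a dedicated argument: a hypothetical $z\in V\cap K_f$ with $f(z)\in f(\partial V)$ is pushed along an internal ray to $J_f$, and then the external ray of $f(\partial V)$ landing at $f(z)$ lifts to an external ray landing at $z$, contradicting the already-established bijection $f(V\cap U(\infty))=U(\infty)\setminus f(\partial V)$. Your claimed connectedness of $\C\setminus f(\partial V)$ is also unjustified: $f(\partial V)$ is a union of \emph{several} external/extended rays coming from different elements of $\Th$, and nothing prevents two of them from terminating at a common point (two distinct critical points or critical Fatou components with the same image), in which case their union together with $\infty$ contains a closed curve and separates the plane. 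The paper's proof is explicitly written not to assume connectedness: it chooses a basepoint $z_W\in V\cap U(\infty)$ in \emph{each} component $W$ of $\C\setminus f(\partial V)$ and builds the inverse by path lifting within each $W$. Even granting connectedness, ``open and relatively closed'' only yields surjectivity onto $\C\setminus f(\partial V)$, not injectivity and not the containment $f(V)\subset\C\setminus f(\partial V)$.

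For (3), ``combinatorial analysis of all such excursions yields a contradiction'' is not an argument. The paper's mechanism is a counting one: since $f([z,w])$ is a tree and $f(z)=f(w)$, a point of $f([z,w])$ has at least two $f$-preimages on $[z,w]$, so an interior point $x\in V\cap[z,w]$ shares its image with some other point $y\in[z,w]$, and the first claim of (3) then forces $x\in\partial V$, a contradiction; nothing in your sketch supplies a substitute for this step. For (4), ``fitting the resulting sub-arcs together'' is precisely what must be proved: in a tree the images $E_i=f(A_i)$ of the closures of the components of $[z,w]\setminus\partial V$ could a priori overlap or branch off one another. The paper uses (3) to show that the $E_i$ are arcs with pairwise disjoint interiors, and then proves the key claim that each $F_i=f(B_i)$ (image of the intermediate piece $B_i\subset\partial V$) meets $\bigcup_j E_j$ only at the two matching endpoints $f(w_i)$ and $f(z_{i+1})$; only with that claim does the chain $E_1,[f(w_1),f(z_2)],E_2,\ldots$ assemble into a single regulated arc of $f([z,w])$ containing $f([z,w]\setminus\partial V)$. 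Your outline gives no reason why the images line up along one arc rather than branching inside the tree.
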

\begin{proof}
%(1).
\begin{enumerate}
\item  For each $i\in\{1,\ldots,m+n\}$, denote by $Y_i$ the connected component of $\C\setminus\RRR(\Theta_i)$ which contains $ V$. By Definition \ref{unlink-f} we get $ V=\cap_{i=1}^{m+n} Y_i$. By Lemma \ref{property-R(c)}, we further get $\ov{ V}=\cap_{i=1}^{m+n} \ov{Y_i}$. It follows that $z,w$ belong to each $\ov{Y_i}$. Note that the intersection of $\partial Y_i$ and $K_f$ is either a point in $J_f$ or two internal rays of a  Fatou component, then $[z,w]$ is contained in every $\ov{Y_i}$.  Consequently, we have $[z,w]\subset \cap_{i=1}^{m+n}\ov{Y_i}=\ov{ V}$.

% \noindent (2).
\item Note that the boundary of $V$ consists of external/internal rays. It follows that the image $f(\partial  V)$ is the union of finitely many external or internal rays. Let $I$ denote the unlinked equivalence class on $\T$ that corresponds to $ V$. From the correspondence between $I$ and $ V$, and Proposition \ref{common}.(2), we get that $f$ maps $ V\cap U(\infty)$ bijectively onto the region $U(\infty)\setminus f(\partial  V)$.

We claim that $f( V)\subset \C\setminus f(\partial  V)$. On the contrary, let $z\in  V\cap K_f$ such that $f(z)\in f(\partial  V)$. If $z\in U$, a Fatou component, denote by $c$ the center of $U$. Since $\partial V\cap K_f$ consists of internal rays and $z\in V$, then, if $z=c$, the entire $U$ is contained in $V$; otherwise, there is a unique internal ray of $U$ passing through $z$, which is contained in $V\cup\{c\}$. Anyway, we can pick an internal ray $r_z$ of $U$ with $r_z\subset V\cup\{c\}$. So, by choosing the landing point of $r_z$ if necessary, we can assume $z\in J_f$. In this case, there is an external ray contained in $f(\partial  V)$ landing at $f(z)$, and its lift based at $z$ is also an external ray. As $z\in V$,  all external rays landing at $z$ belong to $ V\cap U(\infty)$. It contradicts to the fact that $f( V\cap U(\infty))=U(\infty)\setminus f(\partial  V)$. The claim is then proven.

Now, we fix a point $w_W\in U(\infty)$ in each component $W$ of $\C\setminus f(\partial  V)$ and denote by $z_W$ the unique preimage of $w_W$ in $ V\cap U(\infty)$. Given any connected component $W$ of $\C\setminus f(\partial  V)$, and any point $w\in W$, we pick an arc $\g_w\subset W$ joining $w$ and $w_W$. We assert that $\g_w$ has a unique lift in $ V$ starting from $z_W$.  Firstly, let $\G$ denote the component of $f^{-1}(\g_w)$ that contains $z_W$. Then $\G\subset V$ since $z_W\in V$ and $\g_w \cap f(\partial  V)=\emptyset$. Note also that $V$ contains no critical points of $f$, so $f:\G\to\g_w$ is a covering, and hence a homeomorphism. We denote by $h(w)$ the end point of this lift. Since $W$ is simply-connected,  the point $h(w)$ is independent on the choice of $\g_w$. We then get a map $h:\C\setminus f(\partial  V)\to  V$. This map is easily checked to be the inverse  of $f: V\to \C\setminus f(\partial V)$.

%\noindent (3).
\item Let $z,w$ be two distinct points in $\ov{ V}\cap K_f$ with $f(z)=f(w)$. We first claim that both $z$ and $w$ belong to $\partial  V$. Otherwise we can find two points $z'$ and $w'$ in $ V$ near $z$ and $w$, respectively, such that $f(z')=f(w')$ and this contradicts assentation  (2).

  Assume now that $z,w\in \partial  V$ and let $[z,w]$ be the regulated arc joining $z,w$. By point (1), $[z,w]\subset \ov{ V}$. We just need to show that no points of $[z,w]$ are in $ V$. On the contrary, suppose that $x\in  V\cap [z,w]$. Note that the image of $[z,w]$ is a tree, so the fact $f(z)=f(w)$ implies that each point in $f([z,w])$ has at least two preimages in $[z,w]$ by $f$. Then there is  a point $y\in[z,w]$ such that $f(x)=f(y)$. It contradicts the claim above.

%\noindent (4).
 \item  The former assertion is a direct consequence of (3).
Note that each connected component of $[z,w]\cap \partial  V$ is either one point or a closed segment, so we
 denote by
 $$A_1:=[z_1,w_1],\ldots,A_m:=[z_m,w_m],\ m\geq0,$$
 the closures of the connected component of $[z,w]\setminus\partial  V$ such that $A_i$ separates $A_{i-1}$ and $A_{i+1}$ for each $i\in [2,m-1]$. Here $m=0$ represents the case that $[z,w]\subset \partial  V$. Obviously, if $m\geq1$, each $A_i$ is a closed arc. For each $i\in[1,m-1]$, we denote by  $B_i$ the connected component of $[z,w]\cap\partial  V$  between $A_i$ and $A_{i+1}$ (see the left one in Figure \ref{map}).
\begin{figure}
 \begin{center}
 \includegraphics[width=15cm]{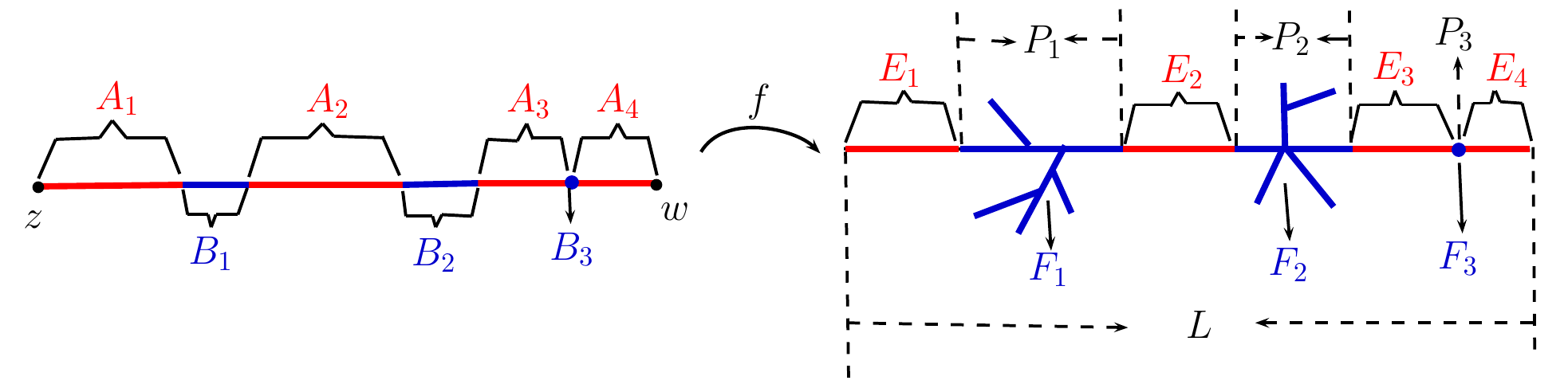}
  \caption{The decomposition of $[z,w]$ and its image in (4)}\label{map}
  \end{center}
\end{figure}

For each $i\in\{1,\ldots,m\}$, set $E_i:=f(A_i)$. According to (3), we get that $E_i=[f(z_i),f(w_i)]$ and their interior are pairwise disjoint. We also set $F_i:=f(B_i),i\in\{1,\ldots,m\}$. Clearly, each $F_i$ is a tree or point containing $f(w_i)$ and $f(z_{i+1})$ (see the right one in Figure \ref{map}).

We claim that except  these two points, the set $F_i$ is disjoint from $\bigcup_{j=1}^{m}E_j$. To see this, let $p\in B_i$ with $f(p)\in E_j$. Since $E_j=f(A_j)$, there exists a point $q\in A_j$ such that $f(q)=f(p)$. By (3) and the discussion above, the point $q$ also belongs to $B_i$. It implies that either $j=i$ and $q=w_i$ or $j=i+1$ and $q=z_{i+1}$. Then the claim is proven.

Consequently, we denote by $P_j:=[f(w_j),f(z_{j+1})]$ an arc joining $E_j$ and $E_{j+1}$ for each $j\in\{1,\ldots,m-1\}$. Then $P_j\subset F_j$ and $P_j\cap \big(\cup_{i=1}^{m}E_i\big)=\{f(w_j),f(z_{j+1})\}$. Therefore $$L:=\big(\cup_{i=1}^mE_i\big)\cup\big(\cup_{j=1}^{m-1}P_j\big)$$ is a regulated arc and contained in $f([z,w])$ (see Figure \ref{map}).
\end{enumerate}
\end{proof}

\section{The core entropy and Thurston's entropy algorithm}\label{section-algorithm}

The purpose of this section is to define the notions in the following diagram and provide the background for proving Theorem \ref{Thurston-algorithm}.

 \[
 \begin{tikzpicture}
   \matrix[row sep=0.8cm,column sep=4cm] {
     \node (Gammai) {$ \text{critical portrait  }\mathbf{\Theta}$}; &
       \node (Gamma) {$\log\rho(\mathbf{\Theta})$}; \\
     \node (S2i) {p.f polynomial $f$}; &
       \node (S2) {$h(H_f, f)$.}; \\
   };
   \draw[double equal sign distance] (Gamma) to node[auto=left,cdlabel] {\text{ equality by Thm. \ref{Thurston-algorithm}} } (S2);
   \draw[->] (S2i) to node[auto=right,cdlabel] {\text{the core entropy of $f$}} (S2);
   \draw[->] (Gammai) to node[auto=left,cdlabel] {\text{Thurston's  entropy algorithm}} (Gamma);
   \draw[->]  (S2i) to node[auto=right,cdlabel] {} (Gammai);
 \end{tikzpicture}
 \]

%More precisely, for any \pf polynomial $f$, it induces a Markov action on its Hubbard tree.
%The entropy of the action is called the {\em core-entropy} of the polynomial. Besides, the combinatorial quantity critical portrait, introduced above, is used to characterize the polynomial as dynamics.

% In order to combinatorially encode  and effectively compute  the core-entropy, W. Thurston developed an algorithm, that takes $\mathbf{\Theta}(f)$ as
% input, constructs a non-negative matrix $A_f$ (bypassing $f$ and $H_f$), and outputs its Perron-Frobenius leading eigenvalue $\rho(A_f)$.
% We will prove that the logarithm of this eigenvalue is  the core-entropy of the polynomial $f$.

\subsection{Separation sets}%with respect to formal critical portraits}\label{major}

% Among all degree $d$ formal critical portraits, there is a class of ones, called degree $d$ {\it primitive majors}, that are generic and play the essential roles in the  combinatorial aspect.

 In order to introduce Thurston's entropy algorithm, we need to consider the position of two points of $\T$ relatively to a critical portrait.

% \begin{figure}[htpb]
%\centering
%\includegraphics[width=14cm]{join.pdf}
%\caption{The left figure is a critical portrait  $\Th$, and its induced primitive major contains only one element: the polygon with red boundary in the middle figure. The green path in the right figure consists of portrait leaves of $\Th$ and joins $\theta,\theta'$.} \label{join}
%\end{figure}

Let $\Th=\{\Theta_1,\ldots,\Theta_s\}$ be a critical portrait.
 Given two angles $x,y\in \T$ (not necessarily different) and an element $\Theta\in\Th$, we say that the leaf $\ov{xy}$ \emph{crosses} $hull(\Theta)$ if $x,y\not\in \Theta$ and $\ov{xy}\cap hull(\Theta)\not=\emptyset$. In this case, the angles $x, y$ are said to be \emph{separated} by $\Theta$.

\begin{definition}[separation set]\label{separation-set}
Given an ordered pair of angles $x,y$ in $\T$, we say that its \emph{separation set} (relatively to $\Th$) is
$(k_1, \dots, k_p)$ if the leaf $\ov{xy}$ successively crosses $hull(\Theta_{k_1}),\ldots hull(\Theta_{k_p})$ from $x$ to $y$, where $\Theta_{k_1},\ldots,\Theta_{k_p}\in \Th$, and no other elements of $\Th$ separate $x,y$.
The angles $x$ and $y$ are called \emph{non-separated} by $\Th$ if its separation set is empty.
\end{definition}

\begin{lemma}\label{separation}
Two angles  $x,y\in\T$ are non-separated by $\Th$ if and only if there exist an unlinked equivalence class $I$ on $\T$ and two elements $\Theta_i,\Theta_j\in\Th$ intersecting $\ov{I}$ such that $x,y\in I\cup \Theta_i\cup \Theta_j$.
%In this case, $\tau(x)=\tau(y)$ if and only if $x\equiv_\Th y$.
\end{lemma}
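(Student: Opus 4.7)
The plan exploits the decomposition of $\ov{\D}$ by the hulls $hull(\Theta_k)$ into $d$ hyperbolically convex closed complementary regions $\ov{D_1},\ldots,\ov{D_d}$, with $D_m\cap\T=I_m$. A preliminary observation I use repeatedly is: if $\Theta_k\cap\ov{I_m}\neq\emptyset$, then $hull(\Theta_k)$ has at least one edge on $\partial D_m$, because any $\theta\in\Theta_k\cap\ov{I_m}$ is an endpoint of an arc of $I_m$, and exactly one of the two edges of $hull(\Theta_k)$ emanating from $\theta$ borders that arc of $\T$, hence lies on $\partial D_m$.

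\emph{For the $(\Leftarrow)$ direction,} let $D_m$ be the region with $D_m\cap\T=I$ and set $P:=hull(\Theta_i)\cup\ov{D_m}\cup hull(\Theta_j)$; by the preliminary observation, the three convex pieces of $P$ are glued along shared edges. The hypothesis yields $x,y\in P$, and I will argue $\ov{xy}\subset P$: being a geodesic, the chord can only pass between the three convex pieces by crossing shared edges, since any detour into an adjacent region or hull would force it to reverse direction to reach its endpoint. Now let $\Theta_k\in\Th$ with $x,y\notin\Theta_k$. Pairwise disjointness of hulls implies that $hull(\Theta_k)\cap P$ consists of at most isolated vertices (shared with $\Theta_i$ or $\Theta_j$) and possibly one edge (shared with $\partial D_m$). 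The chord $\ov{xy}\subset P$ cannot meet any such vertex (its interior lies in $\D$, vertices lie on $\partial\D$, and the endpoints $x,y\notin\Theta_k$) nor cross any such edge (that would exit $P$, contradiction). Hence $\ov{xy}\cap hull(\Theta_k)=\emptyset$, and $x,y$ are non-separated.

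\emph{For the $(\Rightarrow)$ direction,} I trace the chord $\ov{xy}$. At $x$, its initial direction into $\D$ lies in the angular sector of at most one hull $\Theta_{i_x}\ni x$ (by disjointness of hull interiors at the shared vertex $x$); if no such sector contains it, the chord enters a complementary region immediately at $x$. The key geometric fact: for any other hull $\Theta_b\ni x$ with $\Theta_b\neq\Theta_{i_x}$, the vertices $\Theta_b\setminus\{x\}$ all lie in one arc of $\T\setminus\{x,y\}$, and hence $hull(\Theta_b)\cap\ov{xy}=\{x\}$; this follows from the pairwise disjointness in the critical portrait, which forces $\Theta_b\setminus\{x\}$ into an arc disjoint from the arc containing $\Theta_{i_x}\setminus\{x\}$ at $x$, together with the fact that the direction to $y$ lies in the latter arc. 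Combined with the non-separation hypothesis (which excludes any hull without $x$ or $y$ from meeting $\ov{xy}$), the portion of $\ov{xy}$ outside $hull(\Theta_{i_x})\cup hull(\Theta_{j_y})$ avoids all hulls, so by connectedness it lies in a single $\ov{D_m}$. Set $I:=I_m$; take $\Theta_i:=\Theta_{i_x}$ (which borders $D_m$ via its exit edge) if $\Theta_{i_x}$ exists, and otherwise let $\Theta_i$ be any element of $\Th$ with an edge of $\partial D_m$ adjacent to $x$ (which exists by the preliminary observation when $x\in\cup\Theta_k$; any hull bordering $D_m$ works if $x\in I_m$). Define $\Theta_j$ analogously at $y$; this produces the required $I,\Theta_i,\Theta_j$, since $\Theta_i\cap\ov I,\Theta_j\cap\ov I$ are nonempty and $x,y\in I\cup\Theta_i\cup\Theta_j$.

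The main obstacle is the wedge-disjointness step in $(\Rightarrow)$: rigorously verifying that when two elements of $\Th$ share only one vertex on $\T$, their remaining vertices lie in disjoint arcs of $\T$ at that vertex, and deducing that the chord $\ov{xy}$ meets the ``non-leading'' hull only at the shared vertex. This reduces to the pairwise disjointness condition in the critical portrait together with the standard fact that two distinct hyperbolic geodesics meet in at most one point.
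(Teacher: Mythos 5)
Your ``preliminary observation'' is the crux, and it is false. If several hulls share a vertex $\theta\in\ov{I_m}$, their tangent sectors at $\theta$ are pairwise disjoint, and only the hull whose edge at $\theta$ is angularly closest to the arc of $I_m$ ending at $\theta$ contributes an edge to $\partial D_m$; every other hull with vertex $\theta$ meets $\ov{D_m}$ only in the point $\theta$. So the three pieces of your $P$ need not be glued along edges, $\ov{xy}\not\subset P$ in general, and the $(\Leftarrow)$ argument collapses. This is not a repairable defect of your proof: the $(\Leftarrow)$ implication is false as stated. In the paper's own Example~\ref{no-well}, with $\Th=\{\Theta(U)=\{0,\tfrac13\},\ \Theta(-1)=\{\tfrac13,\tfrac23\}\}$, take $I=(0,\tfrac13)$, so $\ov I=[0,\tfrac13]$ and both elements meet $\ov I$ at $\tfrac13$; then $x=\tfrac23\in\Theta(-1)$ and $y=\tfrac16\in I$ satisfy the right-hand side of the lemma, yet $\tfrac16$ and $\tfrac23$ lie on opposite sides of the chord joining $0$ and $\tfrac13$ and neither belongs to $\Theta(U)$, so $\Theta(U)$ separates $x$ from $y$. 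Your $(\Rightarrow)$ direction has a symmetric problem at its last step: the exit edge of $hull(\Theta_{i_x})$ does lie on $\partial D_m$, but its endpoints lie in $\ov{D_m}\cap\T$, which can be strictly larger than $\ov{I_m}$ (it may contain isolated cusp vertices where two boundary edges of $D_m$ meet on $\T$), so ``borders $D_m$'' does not give $\Theta_{i_x}\cap\ov{I_m}\neq\emptyset$. Indeed $(\Rightarrow)$ also fails: for the degree-$10$ portrait of chords $\{0,\tfrac{4}{10}\},\{0,\tfrac{7}{10}\},\{\tfrac1{10},\tfrac2{10}\},\{\tfrac2{10},\tfrac3{10}\},\{\tfrac3{10},\tfrac4{10}\},\{\tfrac5{10},\tfrac6{10}\},\{\tfrac6{10},\tfrac7{10}\},\{\tfrac8{10},\tfrac9{10}\},\{\tfrac9{10},0\}$, the pair $x=\tfrac7{10}$, $y=\tfrac9{20}$ is non-separated, $y$ forces $I=(\tfrac4{10},\tfrac5{10})$, and neither element containing $\tfrac7{10}$ meets $[\tfrac4{10},\tfrac5{10}]$. (The paper offers no argument for this lemma beyond ``it follows from the definition'', so the error originates in the statement, not in your reading of it.)

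Both directions, and essentially your whole argument, become correct if ``$\Theta_i$ intersects $\ov I$'' is replaced throughout by ``$hull(\Theta_i)$ has an edge on $\partial D$'', where $D$ is the complementary region carrying $I$ (equivalently, $\Theta_i$ contains both endpoints of an edge of $\partial D$), and $I$ is enlarged to $\ov D\cap\T$. Then $hull(\Theta_i)\cup\ov D\cup hull(\Theta_j)$ really is glued along full edges, the leaf $\ov{xy}$ decomposes into a segment in $hull(\Theta_i)$, one in the convex set $\ov D$, and one in $hull(\Theta_j)$, and your crossing analysis goes through; conversely the exit edges of the leaf supply exactly such $\Theta_i,\Theta_j$. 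This corrected version is also all the paper ever uses: in Corollary~\ref{non-separated} the sets $\Theta_{k_i},\Theta_{k_{i+1}}$ are consecutive hulls crossed by $\ov{xy}$, so each has a full edge on the boundary of the region the leaf traverses between them, and Lemma~\ref{injective-g} needs only the (corrected) forward direction. So the real obstacle is not the ``wedge-disjointness'' step you flagged at the end, but the fact that meeting $\ov I$ in a single point is simultaneously too weak a hypothesis for $(\Leftarrow)$ and too strong a conclusion for $(\Rightarrow)$.
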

\begin{proof}
It follows directly from the definition of non-separated.% and $\equiv_\Th$-equivalence. %and Proposition \ref{common}.(2).
\end{proof}

By this lemma, it is easy to see that
\begin{corollary}\label{non-separated}
If $x,y\in \T$ has the separation set $(k_1,\ldots,k_p)\not=\emptyset$, then, by setting $\theta_0:=x,\theta_{p+1}:=y$ and the choice of any angle $\theta_i\in\Theta_{k_i}$, each pair of angles $\theta_i,\theta_{i+1}$ for $i\in\{0,\ldots,p\}$ is non-separated by $\Th$.
\end{corollary}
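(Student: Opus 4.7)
The plan is to produce, for each consecutive pair $(\theta_i,\theta_{i+1})$, an unlinked equivalence class on $\T$ together with (at most two) members of $\Th$ satisfying the hypothesis of Lemma~\ref{separation}.

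First I would cut the hyperbolic leaf $\overline{xy}$ at its $p$ transverse intersections with $hull(\Theta_{k_1}),\ldots,hull(\Theta_{k_p})$, obtaining $p+1$ consecutive open sub-arcs $s_0,\ldots,s_p$ read from $x$ to $y$. By the maximality built into the definition of separation set, no other element of $\Th$ has its hull meeting $\overline{xy}$ transversely, so each open arc $s_i$ lies in a single connected component $R_i$ of $\overline{\D}\setminus\bigcup_{\Theta\in\Th}hull(\Theta)$. By Proposition~\ref{common}(1), $R_i$ corresponds to a unique unlinked equivalence class $I_i:=R_i\cap\T$ on $\T$. This is the class one then feeds into Lemma~\ref{separation}.

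For $1\le i\le p-1$ the sub-arc $s_i$ has its two endpoints on $hull(\Theta_{k_i})$ and $hull(\Theta_{k_{i+1}})$ respectively, so both of these hulls lie on $\partial R_i$, and consequently $\Theta_{k_i}$ and $\Theta_{k_{i+1}}$ each meet $\overline{I_i}$. Since $\theta_i\in\Theta_{k_i}$ and $\theta_{i+1}\in\Theta_{k_{i+1}}$, Lemma~\ref{separation} applied with $I_i,\Theta_{k_i},\Theta_{k_{i+1}}$ gives immediately that $\theta_i$ and $\theta_{i+1}$ are non-separated. The two end-cases $i=0$ and $i=p$ are symmetric, so I would only treat $i=0$: if $x\notin\bigcup_{\Theta\in\Th}\Theta$ then $x\in I_0$ and we apply Lemma~\ref{separation} using $I_0$ together with $\Theta_{k_1}$ (taken for both required members); if instead $x\in\Theta_\ell$ for some $\ell$, necessarily distinct from $k_1,\ldots,k_p$ by the definition of crossing, then $hull(\Theta_\ell)$ borders $R_0$ at the initial endpoint $x$ of $s_0$, so $\Theta_\ell\cap\overline{I_0}\ne\emptyset$ and Lemma~\ref{separation} applies with the pair $\Theta_\ell,\Theta_{k_1}$.

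The main obstacle is purely geometric bookkeeping: making precise the intuitive picture ``between consecutive crossings by hulls of $\Th$, the leaf lies in a single complementary region'', and handling the possibility that $x$ or $y$ is itself a vertex of some $hull(\Theta)$. The pairwise disjointness of the hulls (Proposition~\ref{define-portrait}(2)) is exactly what makes each intersection of $\overline{xy}$ with a $hull(\Theta_{k_j})$ a single transverse crossing, so that the list of sub-arcs $s_0,\ldots,s_p$ is well-defined; from there the conclusion is simply a matter of reading off which hulls border the adjacent region $R_i$ and invoking Lemma~\ref{separation}.
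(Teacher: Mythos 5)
Your proposal takes the same route the paper intends (the paper gives no argument beyond ``by Lemma~\ref{separation} it is easy to see''), and for the interior pairs $1\le i\le p-1$ your bookkeeping is complete: any $\Theta_\ell$ containing $x$ (resp.\ $y$) meets the leaf $\ov{xy}$ in a connected subsegment containing the endpoint $x$ (resp.\ $y$), by convexity and the pairwise disjointness of the hulls, so that subsegment precedes the first crossing (resp.\ follows the last), and each $s_i$ with $1\le i\le p-1$ really is disjoint from every hull and sits in one region $R_i$ bordered by $hull(\Theta_{k_i})$ and $hull(\Theta_{k_{i+1}})$.

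The one step that does not survive scrutiny is the claim that $s_0$ (and symmetrically $s_p$) lies in a single complementary region. If $x\in\Theta_\ell$ with $\#\Theta_\ell\ge 3$ and $y$ lies in a gap of $\T\setminus\Theta_\ell$ not adjacent to $x$, the leaf $\ov{xy}$ leaves $x$ through the \emph{interior} of the polygon $hull(\Theta_\ell)$ and exits through an edge $\ov{ab}$ with $a,b\in\Theta_\ell\setminus\{x\}$; then $s_0$ contains a nontrivial segment of $hull(\Theta_\ell)$, it is not contained in any component of $\ov{\D}\setminus\bigcup_{\Theta\in\Th}hull(\Theta)$, and $hull(\Theta_\ell)$ borders the relevant region through the edge $\ov{ab}$ rather than ``at the initial endpoint $x$'' ($x$ is not even in the closure of that region). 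A concrete instance: $d=5$, $\Th=\bigl\{\{0,\tfrac15,\tfrac35\},\{\tfrac14,\tfrac{9}{20}\},\{\tfrac{7}{10},\tfrac{9}{10}\}\bigr\}$, $x=0$, $y=\tfrac{3}{10}$, whose separation set consists of the single element $\{\tfrac14,\tfrac{9}{20}\}$, while the leaf from $0$ to $\tfrac{3}{10}$ first traverses the interior of $hull(\{0,\tfrac15,\tfrac35\})$. The repair is immediate and does not change your conclusion: take $R_0$ to be the component containing the portion of $s_0$ lying strictly between the exit edge of $hull(\Theta_\ell)$ and the first crossing with $hull(\Theta_{k_1})$; both $hull(\Theta_\ell)$ and $hull(\Theta_{k_1})$ border $R_0$, hence $\Theta_\ell$ and $\Theta_{k_1}$ meet $\ov{I_0}$, and since $x\in\Theta_\ell$ and $\theta_1\in\Theta_{k_1}$, Lemma~\ref{separation} applies exactly as you intend. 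So the gap is local and easily closed; the overall strategy and the invocation of Lemma~\ref{separation} are correct.
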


\subsection{Thurston's entropy algorithm }\label{algorithm2}

%\noindent In this part, we will show how the algorithm works on a  {\it rational} critical portrait.

A critical portrait is said to be  \emph{rational} if each of its elements contains only rational angles. For instance, any weak critical marking of a \pf polynomial is rational.
Throughout this subsection, we fix a rational critical portrait
\[\Th=\{\Theta_1,\ldots,\Theta_s\}.\]

Recall that $\text{crit}(\Th)=\cup_{i=1}^s\Theta_i$ and $\text{post}(\Th)=\cup_{n\geq1}\tau^n(\text{crit}(\Th))$.
 We define a set $\SSS=\SSS_\Th$, consisting of all unordered pairs
$\{x,y\}$ with $x\not=y\in \text{post}(\Th)$ if $\#{\rm post}(\Th)\geq 2$, and consisting of only $\{x,x\}$ if ${\rm post}(\Th)=\{x\}$. Note that in the latter case $x$ is fixed by $\tau$. Since $\Th$ is rational, the sets $\text{post}(\Th)$ and $\SSS$ are finite and non-empty.
The following is the procedure of Thurston's entropy algorithm acting on the rational critical portrait ${\Th}$.
\begin{enumerate}
\item Let $\Sigma$ be the abstract linear space over $\R$ generated by the elements of $\SSS$.
%For simplicity, we let each trivial pair $\{x,x\}$ with $x\in {\rm post}(\Th)$ represent the zero vector in $\Sigma$.

\item  Define a linear map $\mathcal{A}: \Sigma\longrightarrow \Sigma$ such that for any basis vector $\{x,y\}\in \SSS$,
\begin{enumerate}
\item $\mathcal{A}(\{x,y\} )=\{\tau(x),\tau(y)\}$ if $x,y$ are non-separated by ${\Th}$; and
\item $\mathcal{A}(\{x,y\} )=\sum\limits_{i=0}^{p}\mathcal{A}(\{\theta_i, \theta_{i+1}\})$, where $\theta_0:=x,\theta_{p+1}:=y$ and $\theta_i\in\Theta_{k_i}\in\Th$,
if the ordered pair $(x,y)$ has the separation set $(k_1,\ldots,k_p)\not=\emptyset$.
%as follows. We stipulate that all pairs as $\{x,x\}$ appeared here are considered the zero of $\Sigma_\Theta$. Then for any basis vector $\{x,y\}\in O_\Theta$, if $x$ and $y $ are non-separated by $\Theta$, set $\mathcal{A}_\Theta(\{x,y\} )=\{\tau(x),\tau(y)\}$; if $x$ and $y$ are and only are separated by $\Xi_{i_1},\ldots,\Xi_{i_r}$ ($r\geq1$) in counterclockwise direction from $x$ to $y$, set
 %   $$\mathcal{A}_\Theta(\{x,y\} )=\{\tau(x),  v_{i_1}(1)\} + \{v_{i_1}(1),v_{i_2}(1)\}+\cdots+\{v_{i_{r-1}}(1),v_{i_r}(1)\}+\{v_{i_r}(1),\tau(y)\}.$$
 \end{enumerate}

\item  Denote by $A=A_{\Th}$ the matrix of $\mathcal{A}$ in the basis  $\SSS$. It is a non-negative matrix. Compute its leading non-negative eigenvalue $\rho({\Th})$ (such an eigenvalue exists by the Perron-Frobenius theorem).
\end{enumerate}

\begin{definition}\label{alto} {\rm (Thurston's entropy algorithm)} Take a rational critical portrait $\Th$ as input and  $ \log \rho(\Th)$ as output. (It is easy to see that $A$ is not nilpotent therefore $\rho(\Th)\ge 1$).\end{definition}

\begin{remark}
\begin{enumerate}
\item By Corollary \ref{non-separated}, each $\AAA(\{\theta_i,\theta_{i+1}\})$ in (2).(b) has been defined in (2).(a).
\item By Lemma \ref{separation} and the definition in (2).(a), each $\AAA(\{\theta_i,\theta_{i+1}\})$, and hence $\AAA(\{x,y\})$, in (2).(b) is independent on the choice of $\theta_i\in\Theta_{k_i}$.
\end{enumerate}
\end{remark}

\begin{example}
Here and in all the examples $d=3$.
Let $\Th=\{\ \{0,1/3\},\{7/15,4/5\}\ \}$. Then the set ${\rm post}(\Th)=\{0,1/5,2/5,3/5,4/5\}$
%\begin{figure}
 \begin{center}
  \includegraphics[scale=0.58]{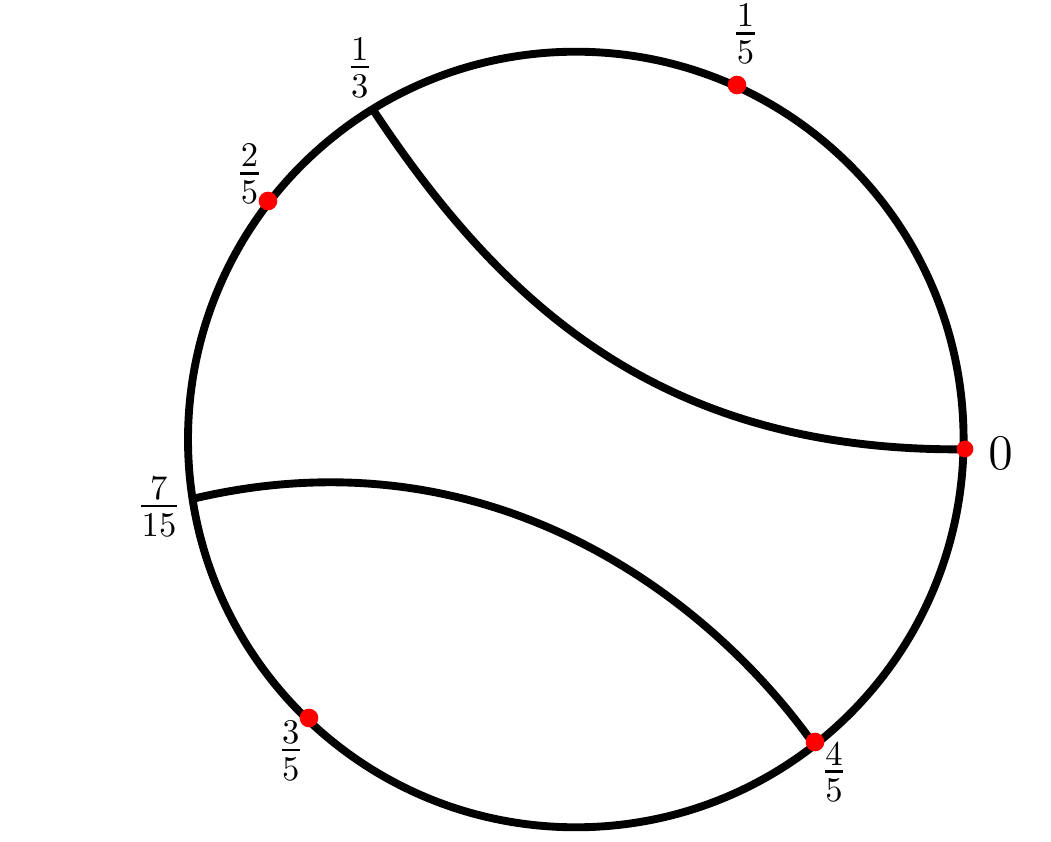}
  %\caption{The deformation of the unit circle by shrinking a degree $7$ primitive major}\label{shrink}
  \end{center}
%\end{figure}
gives rise to an abstract linear space $\Sigma$ with  basis:
$$\SSS= \Bigl\{\{0,\frac{1}{5}\},\{0,\frac{2}{5}\},\{0,\frac{3}{5}\},\{0,\frac{4}{5}\},\{\frac{1}{5},\frac{2}{5}\},
\{\frac{1}{5},\frac{3}{5}\},\mystrut\{\frac{1}{5},\frac{4}{5}\},\{\frac{2}{5},\frac{3}{5}\},\{\frac{2}{5},\frac{4}{5}\},\{\frac{3}{5},\frac{4}{5}\}\Bigr\}$$
 The linear map $\AAA$ acts on the basis vectors as follows:
$$ \begin{array}{l}
\tiny{\Big\{0,\dfrac{1}{5}\Big\}\rightarrow\Big\{0,\dfrac{3}{5}\Big\},\quad \Big\{0,\dfrac{2}{5}\Big\}\rightarrow\Big\{0,\dfrac{1}{5}\Big\}, \quad \Big\{0,\dfrac{3}{5}\Big\}\rightarrow\Big\{0,\dfrac{2}{5}\Big\}+\Big\{\dfrac{2}{5},\dfrac{4}{5}\Big\},
\quad \Big\{0,\dfrac{4}{5}\Big\}\rightarrow\Big\{0,\dfrac{2}{5}\Big\}},\\[15pt]
\tiny{ \Big\{\dfrac{1}{5},\dfrac{2}{5}\Big\}\rightarrow\Big\{0,\dfrac{3}{5}\Big\}+\Big\{0,\dfrac{1}{5}\Big\}, \quad  \Big\{\dfrac{1}{5},\dfrac{3}{5}\Big\}\rightarrow\Big\{0,\dfrac{3}{5}\Big\}+\Big\{0,\dfrac{2}{5}\Big\}+\Big\{\dfrac{2}{5},\dfrac{4}{5}\Big\}, \quad \Big\{\dfrac{1}{5},\dfrac{4}{5}\Big\}\rightarrow\Big\{0,\dfrac{3}{5}\Big\}+\Big\{0,\dfrac{2}{5}\Big\}},\\[15pt]
\tiny{\Big\{\dfrac{2}{5},\dfrac{3}{5}\Big\}\rightarrow\Big\{\dfrac{1}{5},\dfrac{2}{5}\Big\}+\Big\{\dfrac{2}{5},\dfrac{4}{5}\Big\},\quad \Big\{\dfrac{2}{5},\dfrac{4}{5}\Big\}\rightarrow\Big\{\dfrac{1}{5},\dfrac{2}{5}\Big\}\quad  \Big\{\dfrac{3}{5},\dfrac{4}{5}\Big\}\rightarrow\Big\{\dfrac{4}{5},\dfrac{2}{5}\Big\}}. \end{array}$$
We compute $\log\rho(\Th)=1.395$.
\end{example}

%Let $\Th$ be the primitive major associated with $\Theta$. Then the Thurston's entropy algorithm also works on $\Th$. The following proposition tells us the actions of this algorithm on both $\Theta$ and $\Th$ give the same outputs.
%\begin{proposition}\label{same-combinatorial1}
%$\rho(\Theta)=\rho(\Th)$.
%\end{proposition}
%\begin{proof}
%We first note that the set $\{\tau^i(\Xi_k)\mid k\in[1,t],i\in\N_+\}$ is equal to the set $\{\tau^i(\Theta_j)\mid j\in[1,s],i\in\N_+\}$, which implies $O_\Theta=O_{\Th}$. Besides, if $x,y\in O_\Theta$ are separated by $\Xi_{i_1},\ldots,L_{i_r}$ in counterclockwise direction from $x$ to $y$, then there exist
%$\Theta_{k_1},\ldots,\Theta_{k_m}\in \Th$ and a partition $\{\mathfrak{P}_1,\ldots,\mathfrak{P}_m\}$ of the number $1,\ldots,r$ such that for any $\Xi\in\{1,\ldots,m\},j\in \mathfrak{P}_\Xi$, we have $\Xi_{i_j}\subset \Theta_{k_\Xi}$. Therefore, $x,y$ are separated and only separated by $\Theta_{k_1},\ldots,\Theta_{k_m}$ corresponding to $ \Th$. It follows that
%\begin{eqnarray*}
%\AAA_\Theta(\{x,y\})&=&\{\tau(x),\tau(L_{i_1})\}+\{\tau(L_{i_1}),\tau(L_{i_2})\}+\cdots+\{\tau(L_{i_{r-1}}),\tau(L_{i_r})\}+\{\tau(L_{i_r}),\tau(y)\}\\[5pt]
%&=&\{\tau(x),\tau(\Theta_{k_1})\}+\{\tau(\Theta_{k_1}),\tau(\Theta_{k_2})\}+\cdots+\{\tau(\Theta_{k_{m-1}}),\tau(\Theta_{k_m})\}+\{\tau(\Theta_{k_m}),\tau(y)\}\\[5pt]
%&=&\AAA_{ \Th}(\{x,y\})
%\end{eqnarray*}
%Then the two incidence matrixes $A_\Theta,A_{ \Th}$ are equal, and hence $\rho(\Theta)=\rho( \Th)$.
%\end{proof}

\subsection{Relating Thurston's entropy algorithm to polynomials}\label{T-A}

%In 2011, W. Thurston manufactured the algorithm  above to effectively computes the core entropy of any \pf  polynomial without actually computing the Hubbard tree.

 In this part, we will give an intuitive feeling of the relation between the output in the algorithm above and the core entropy of \pf polynomials, and leave the detailed proof to the next section.

Let $f$ be a \pf polynomial of degree $d\geq2$ and $\Th$  a weak critical marking of $f$ as constructed in Section \ref{section-critical-portrait}. It is known that the core entropy of $f$, i.e., the topological entropy of $f$ on $H_f$, is equal to  $\log \rho$ with $\rho$ being the leading eigenvalue of the incidence matrix  $D_{(H_f,f)}$ when considering  the Markov partition of $H_f$ by its edges. Instead, if one looks at the arcs of $H_f$ between any two postcritical points rather than the edges, the action of $f$ on $H_f$ induces another incidence matrix $A_f$ which turns out to have  the same leading eigenvalue $\rho$.

The advantage of this approach lies in the fact that each postcritical point of $f$ corresponds an angle in ${\rm post}(\Th)$ so that any arc in $H_f$ between postcritical points can be combinatorially represented by an angle pair (not necessarily unique). Thus, intuitively, the action of $f$ on these arcs induces a linear map on the space generated by the angle pairs with angles in ${\rm post}(\Th)$, which is the map $\AAA$ in the algorithm. Note that the matrix $A_\Th$ in the algorithm is in general  larger than $A_f$ because one postcritical point of $f$ usually corresponds to several angles in ${\rm post}(\Th)$. What we need to do in this paper is to show that these two matrices  have the same leading eigenvalue.

In the quadratic case, a complete proof of Theorem \ref{Thurston-algorithm} can be found in \cite[Theorem~13.9]{TG}.
The idea of the proof is the following:
\begin{enumerate}
\item Construct a topological graph $G$ and a Markov action $L:G\to G$   such that  the incidence matrix $D_{(G,L)}$ is exactly $A_{\theta}$ (the matrix in the algorithm).
\item Construct a continuous, finite-to-one, and surjective semi-conjugacy $\Phi$ from $L:G\to G$ to $ f_{}:H_f\to H_f$, and then apply Proposition \ref{Do4}.
\end{enumerate}
 We may thus conclude that
\[\log\rho(A_{\theta})=\log\rho(D_{(G,L)})\overset{\text{Prop. \ref{entropy-formula}}}=h(G,L)\overset{\text{Prop. \ref{Do4}}}=h(H_f,f).\]

 In the higher degree case, the basic idea is similar, but there are several extra difficulties to overcome. The main problem is that  a  semi-conjugacy from $L:G\to G$ to $f:H_f\to H_f$ is much more difficult to construct, as we will see in the next section. The specific reason will be explained in the rest of this section.

 Let $f$ be a \pf polynomial with a weak critical marking $\Th=\{\Theta_1,\ldots,\Theta_{m+n}\}$.  Recall the definition of ${\rm post}(\Th)$ given in the table of Section \ref{section-critical-portrait}.
%\[\text{$\text{crit}(\Th)=\cup_{k=1}^{m+n} \Theta_k$ and $\text{post}(\Th)=\cup_{n\geq1}\tau^n(\text{crit}(\Th))$}.\]

Let $G$  be a topological graph with
vertex set $V_G:=\text{post}(\Th),$ and  edge set
 $E_{G}:=\big\{e(x,y)|\ x\not=y\in V_{G}\big\}$. In other words, $G$ is an (undirected) topological  \emph{complete graph} with vertex set $V_G$. Note that when ${\rm post}(\Th)=\{x\}$ the graph $G$ consists of only one vertex $x$. In this case we specially consider $x$ as the (trivial) edge of $G$ to achieve consistency in the following statement.

Mimicking the action of the linear map $\AAA$ in the algorithm, we define a Markov map $L:G\rightarrow G$ as follows. For simplicity, for any $x\in V_G$, we set $e(x,x):=x$, a vertex of $G$.
Let $e(x,y)$ be an edge of $G$. If $x,y$  are non-separated by $\Th$,  let $L$ map $e(x,y)$ monotonously onto the edge or vertex $e(\tau(x),\tau(y))$ of $G$, with $x$ mapped to $\tau(x)$ and $y$ mapped to $\tau(y)$.
If $x,y$ has the separation set $(k_1,\ldots,k_p)\not=\emptyset$,
then subdivide the edge $e(x,y)$ into $p+1$ non-trivial arcs $\delta(z_i,z_{i+1}), i\in[0,p],$ with $z_0:=x$ and $z_{p+1}:=y$, and
let $L$ map each arc $\delta(z_i,z_{i+1})$ monotonously onto $e(\tau(\theta_i),\tau(\theta_{i+1}))$  with $z_i$ mapped to $\tau(\theta_i)$ and $z_{i+1}$ mapped to $\tau(\theta_{i+1})$, where $\theta_0:=x$, $\theta_{p+1}:=y$ and $\theta_i\in \Theta_{k_i}$ for each $i\in[1,p]$.
%\begin{figure}[htbp]\centering
%\includegraphics[width=7cm]{3.4.eps}
%\caption{}\label{L-map}
%\end{figure}

 It is easily seen that the edges of $G$ are in a one-to-one correspondence to the pairs in $\SSS$ under the correspondence $e(x,y)\to \{x,y\}$.
 From the definition of $L$,
 the incidence matrix of $(G,L)$ is exactly the matrix $A_\Th$ in the algorithm. It follows from Propositions \ref{Do3} and \ref{entropy-formula} that
 \begin{proposition}\label{equal-0}
The topological entropy $h(G,L)$ equals to $\log\rho(\Th)$.
\end{proposition}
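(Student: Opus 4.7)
The plan is to derive $h(G,L)=\log\rho(\Th)$ by applying Proposition \ref{entropy-formula} directly to the pair $(G,L)$. Two things need to be checked: first, that $L:G\to G$ is a Markov graph map in the sense of Definition \ref{def:markov}; second, that its incidence matrix $D_{(G,L)}$, computed with respect to the edges $E_G=\{e(x,y)\}$, coincides with the algorithmic matrix $A_\Th$ of Section \ref{algorithm2}. Both checks amount to unwinding the construction of $L$.

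For the Markov property, I would take the distinguished finite set $A\subset G$ required by Definition \ref{def:markov} to be the union of $V_G=\mathrm{post}(\Th)$ with all subdivision points $z_i$ introduced on the separated edges. The definition of $L$ sends each such $z_i$ to $\tau(\theta_i)$ for some $\theta_i\in\Theta_{k_i}\subset\mathrm{crit}(\Th)$; by Proposition \ref{define-portrait}(1) this value is independent of the particular $\theta_i\in\Theta_{k_i}$ chosen, and it lies in $\tau(\mathrm{crit}(\Th))\subset\mathrm{post}(\Th)=V_G$. Each connected component of $G\setminus A$ is then the interior of either a non-separated full edge $e(x,y)$ or a subdivision arc $\delta(z_i,z_{i+1})$, on which $L$ is monotone by construction, so $L$ is Markov.

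For the incidence matrix, I would enumerate the edges of $G$ by the basis $\SSS$ and compare coefficients case by case. If $e(x,y)$ is non-separated, then $L(e(x,y))=e(\tau(x),\tau(y))$ covers that target edge exactly once (or collapses to a vertex if $\tau(x)=\tau(y)$, covering nothing), matching the action $\AAA(\{x,y\})=\{\tau(x),\tau(y)\}$. If instead $e(x,y)$ has separation set $(k_1,\dots,k_p)\neq\emptyset$, then $L(e(x,y))=\bigcup_{i=0}^{p}e(\tau(\theta_i),\tau(\theta_{i+1}))$, each of the $p+1$ subdivision arcs covering its own target edge once; the multiplicity with which any edge $e(u,v)$ appears in this image therefore equals the number of indices $i$ for which $\{u,v\}=\{\tau(\theta_i),\tau(\theta_{i+1})\}$, which is precisely the coefficient of $\{u,v\}$ in $\AAA(\{x,y\})=\sum_{i=0}^{p}\{\tau(\theta_i),\tau(\theta_{i+1})\}$. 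Hence $D_{(G,L)}=A_\Th$.

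Combining the two checks with the observation in Definition \ref{alto} that $\rho(\Th)\geq 1$, so that $D_{(G,L)}=A_\Th$ is not nilpotent, Proposition \ref{entropy-formula} yields $h(G,L)=\log\rho(D_{(G,L)})=\log\rho(\Th)$. The argument is essentially definitional, so there is no serious obstacle; the only point requiring mild care is the bookkeeping in the separated case, where the same target pair $\{u,v\}$ may arise from several consecutive subdivision arcs and its multiplicity in $D_{(G,L)}$ must be verified to match the one built into $\AAA$. Proposition \ref{Do3} plays no essential role at this stage; its use will come later in the comparison between $(G,L)$ and the Hubbard-tree dynamics $(H_f,f)$, which is the genuinely hard step of the whole program.
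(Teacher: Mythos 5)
Your proposal is correct and follows essentially the same route as the paper, which simply observes that the edges of $G$ correspond bijectively to the pairs in $\SSS$, that by construction the incidence matrix $D_{(G,L)}$ is exactly $A_\Th$, and then invokes Proposition \ref{entropy-formula} together with the non-nilpotency noted in Definition \ref{alto}. You merely fill in the definitional bookkeeping (the Markov property via the subdivision points and the multiplicity count in the separated case) that the paper leaves implicit, and your remark that Proposition \ref{Do3} is not actually needed here is accurate.
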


So we only need to prove  $h(G,L)=h(H_f,f)$.
 Motivated  by Proposition \ref{Do4}, we try to construct a continuous, finite-to-one, and surjective semi-conjugacy  from $L:G\to G$ to $ f:H_f\to H_f$. A {\it priori}, the following way seems feasible. Divide the vertices of  $G$ into two parts:
\begin{equation}\label{V-G-F}
V_{G,\FFF}:= \text{post}(\Th_\FFF)\quad\text{and}\quad
V_{G,\JJJ}:= \text{post}(\Th_\JJJ),
\end{equation}
 where the definitions of $\text{post}(\Th_\FFF)$ and $\text{post}(\Th_\JJJ)$ are given in the table of Section \ref{section-critical-portrait}.  We remark that both of $V_{G,\FFF}$ and $V_{G,\JJJ}$ are finite since
the map $f$ is postcritically finite.
 %For simplicity, we call $V_{G,\FFF}$ the {\bf Fatou-type} vertex set, \emph{resp}. $V_{G,\JJJ}$ the {\bf Julia-type} vertex set.
For each  vertex $x\in V_{G,\FFF}$, the ray $\RRR(x)$ supports a Fatou component, denoted by $U_x$, whereas for each  vertex $y\in V_{G,\JJJ}$, the ray $\RRR(y)$ lands at a postcritical point.   We are then prompted to first define a map
 $\chi:V_{G}\to {\rm post}(f)$ such that $\chi(x)$ is the center of $U_x$ if $x\in V_{G,\FFF}$ and $\chi(x)=\gamma(x)$ if $x\in V_{G,\JJJ}$, and then continuously extend $\chi$ to a map from $G$ to $H_f$, also denoted by $\chi$,  such that $\chi$ maps each edge $e(x,y)$ monotonously onto the regulated arc $[\chi(x),\chi(y)]\subset H_f$.

 This construction of $\chi$ indeed works in the quadratic case, so the idea of the proof shown above can be directly realized.  However, in the higher degree case, it may happen that the ray $\RRR(x)$ with $x\in V_{G,\FFF}$ supports simultaneously two Fatou components, or that an angle $y$ belongs to  $V_{G,\FFF}\cap V_{G,\JJJ}$ (see Example \ref{no-well}). It means that such $\chi$ is not always well defined, so that the construction of the projection from $G$ to $H_f$ described above need not work. This is the key point that causes difficulties of proving Theorem \ref{Thurston-algorithm} in the higher degree case.

 \begin{example}\label{no-well}
 The first example comes from \cite[Section I.4.4]{Poi1}. Consider the cubic polynomial $f(z)=z^3-\frac{3}{2}z$. It is easy to see that its critical points are $\pm\frac{\sqrt{2}}{2}$ and they are interchanged by $f$ (see the left one of Figure \ref{intersect}). We may choose a critical portrait of $f$ as
 \[\Th=\Bigl\{\ \Theta(U_A)=\{7/12,1/4\},\ \Theta(U_B)=\{11/12,1/4\}\ \Bigr\}.\]
 The ray $\RRR(1/4)$ supports the Fatou components $U_A$ and $U_B$ simultaneously.

 We then consider the cubic polynomial $f(z)=z^3+\frac{3}{2}z^2$.  One of its critical points $0$ is fixed, and another critical point $-1$ is mapped to a repelling fixed point $1/2$ (the right one of Figure \ref{intersect}). We may choose a critical portrait of $f$ as
 \[\Th=\Bigl\{\ \Theta(U)=\{0,1/3\};\ \Theta(-1)=\{1/3,2/3\}\ \Bigr\}.\]
 In this case, the angle $1/3$ belongs to both $\Theta(U)$ and $\Theta(-1)$.
 \begin{figure}
\begin{tikzpicture}
\node at (-4,0) {\includegraphics[width=8.5cm]{}};
\node at (3.5,0) {\includegraphics[width=7cm]{}};
\node at (-3.6,1.75){\footnotesize{$\RRR(\frac{1}{4})$}};
\node at (-6.85,-1.75){\footnotesize{$\RRR(\frac{7}{12})$}};
\node at (-1.15,-1.75){\footnotesize{$\RRR(\frac{11}{12})$}};
\node at (-5.5,0){\footnotesize{$U_A$}};
\node at (-2.5,0){\footnotesize{$U_B$}};
\node at (7.5,0){\footnotesize{$\RRR(0)$}};
\node at (2.5,2.2){\footnotesize{$\RRR(\frac{1}{3})$}};
\node at (2.5,-2.2){\footnotesize{$\RRR(\frac{2}{3})$}};
\node at (2.95,0){\footnotesize{$-1$}};
\node at (4.5,0){\footnotesize{$U$}};
%\ruler{8}{2}
\end{tikzpicture}
 \caption{Badly mixed cases of critical portraits}\label{intersect}
\end{figure}
 \end{example}

\section{The proof of Theorem \ref{Thurston-algorithm}}\label{proof}
 By Proposition \ref{equal-0}, we just need to verify the equality $h(G,L)=h(H_f,f)$.
The idea is as follows (using the notations in Section \ref{section-algorithm}).

To solve the problem that $\chi:G\to H_f$ is not well defined, we reduce the Hubbard tree $H_f$ to another finite tree $T$ by collapsing critical and postcritical internal rays so that $\phi\circ\chi:G\to T$ is well defined, where $\phi$ denotes the quotient map from $H_f$ to $T$. Meanwhile, the Markov map $f: H_f\to H_f$ descends to a Markov map $g:T\to T$ fulfilling that $\phi\circ f=g\circ \phi$. Then, to establish $h(H_f,f)=h(G,L)$,  on one hand we prove that $h(H_f,f)=h(T,g)$, which is based on an analysis of the incidence matrices of $(H_f,f)$ and $(T,g)$; on the other hand we verify that $h(T,g)=h(G,L)$, for which we basically use Proposition \ref{Do4}.
Following this strategy,  our proof is divided into several steps.

Throughout this section, we fix a \pf polynomial $f$ and a weak critical marking of $f$
\[\Th=\Bigl\{\Theta_1(c_1),\ldots,\Theta_m(c_m);\Theta(U_1),\ldots,\Theta(U_n)\Bigr\}\triangleq\{\Theta_1,\ldots,\Theta_{m+n}\}.\]

%Recall that  $V_{G,\FFF}$ denotes the Fatou-type vertex set of $G$, i.e., the union of $\text{crit}(\Th_\FFF)$ and $\text{post}(\Th_\FFF)$.

\subsection {The construction of the quotient map $\phi$}\label{1}

 Recall the definition of critical/postcritical internal rays given in Section \ref{section-critical-portrait}.
By collapsing the critical and postcritical internal rays, we reduce the Hubbard tree $H_f$ to a tree $T$.

Precisely,  the relation $\sim$ is defined on $\C$ such that $z\sim w$ if and only if $z=w$ or $z$ and $w$ are contained in  a path constituted by critical or postcritical internal rays.
It is clear that $\sim$ is an equivalence relation on $\C$. We denote by $\phi:\C\to\C/_\sim$ the quotient map.
\begin{lemma}\label{quotient}
The quotient space $\C/_\sim$ is homeomorphic to $\C$.
\end{lemma}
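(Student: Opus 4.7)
The plan is to invoke Moore's decomposition theorem for $S^2$. First, compactify to $S^2 = \C\cup\{\infty\}$ and extend $\sim$ by declaring $\{\infty\}$ its own equivalence class, yielding a decomposition $\mathcal{D}$ of $S^2$. The strategy is to check that $\mathcal{D}$ is upper semi-continuous and that each decomposition element is a non-separating continuum; Moore's theorem then gives $S^2/\mathcal{D}\cong S^2$, and removing the image of $\infty$ produces $\C/{\sim}\cong \C$.

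First I would describe the equivalence classes. Each non-trivial class is a maximal connected union of critical and postcritical internal rays (together with their landing points and the relevant Fatou centres) glued at shared endpoints. By Lemma \ref{basic-property}(3) these rays are eventually periodic under $f$, so there are only finitely many of them; in particular there are only finitely many non-trivial equivalence classes, and each is a compact connected finite union of arcs --- a finite topological graph embedded in $\C$.

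The decisive and most delicate step is to upgrade ``finite graph'' to ``finite tree'', so that each non-trivial class $E$ is non-separating in $S^2$. Inside the closure of any single Fatou component $U$, all critical or postcritical internal rays of $U$ emanate from the centre of $U$, so within $\overline{U}$ they form a star and cannot create a cycle; for critical $U$ this uses Lemma \ref{basic-property}(4) to control which rays occur inside $\overline{U}$, and Lemma \ref{basic-property}(5) further limits the periodic ones. Two such stars in distinct Fatou components can only be concatenated at common boundary landing points, and the combinatorial constraints on the weak critical marking --- notably Lemma \ref{basic-property}(4)--(5) together with the near-disjointness of the convex hulls of the $\Theta_i$'s from Proposition \ref{define-portrait}(2) --- prevent chains of such concatenations from closing up into a topological loop. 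This is the step where I expect to spend most of the work.

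Once each non-trivial class is known to be a finite tree, it is automatically a non-separating continuum in $S^2$. Upper semi-continuity of $\mathcal{D}$ then follows from finiteness: given any $z$ and any open $V\supset [z]$, one may shrink $V$ to be disjoint from the finitely many non-trivial classes other than $[z]$, obtaining an open saturated neighbourhood of $[z]$ inside $V$. Moore's theorem therefore applies and produces $S^2/\mathcal{D}\cong S^2$; restricting to the complement of the image of $\infty$ finishes the proof by delivering the required homeomorphism $\C/{\sim}\cong \C$.
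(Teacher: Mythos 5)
Your proposal follows essentially the same route as the paper: pass to the one-point compactification, observe that each non-trivial class is a finite tree (hence a compact, connected, non-separating set) and that the decomposition is closed/upper semi-continuous, and invoke Moore's theorem. The only difference is that the step you flag as the most delicate --- ruling out loops in a chain of internal-ray stars --- is immediate from the paper's regulated-arc machinery: a connected union of internal rays is allowably connected, hence coincides with the regulated hull of finitely many points in $K_f$, which is a finite tree by the Douady--Hubbard proposition quoted in Section 3.2, so no extra combinatorial work is needed there.
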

\begin{proof}
Note that each $\sim$-class with more than one point is a finite tree, so the relation $\sim$ clearly satisfies the following $4$ properties:
\begin{enumerate}
\item there are at least two
  distinct equivalence classes;
\item it is closed as a subset of $\widehat{\C}\times \widehat{\C}$ equipped with
the product topology;
\item each equivalence class is a compact connected set;
\item the complementary component of each equivalence class is connected.
\end{enumerate}
Moore's Theorem (\cite{M}) says that if an equivalence relation on $\widehat{\C}$ has these $4$ properties, then the quotient space of $\widehat{\C}$ modulo this equivalence relation is homeomorphic to $\widehat{\C}$. By applying the theorem to the one-point
compactification of $\C$, we get that $\C/_\sim$ is homeomorphic to $\C$.
\end{proof}

For simplicity, we assume that $\C/_\sim=\C$, and $\phi$ is chosen to be  identity outside a small neighborhood of the filled in Julia set $K_f$.

\begin{definition}[essential $\sim$-equivalence classes]\label{essential}
 A $\sim$-equivalence class $\x$ is called \emph{essential} if it is either \emph{non-trivial}, meaning that it contains more than one point, or a point in $ {\rm crit}(f)\cup  {\rm post}(f)$.
\end{definition}

Thus $\phi(z)=\phi(w)$ with $z\not=w\in\C$ if and only if $z,w$ belong to an essential $\sim$-class. Note also that each  essential $\sim$-equivalence class is either a finite tree or a point. In the latter case, it is a critical or postcritical point in the Julia set.

%For $\alpha\in I$, set $H_\alpha:=\x_\alpha\cap H_f$. We claim that
%\begin{proposition}\label{connected-subtree}
%Each $H_\alpha$ is a connected subtree of $H_f$ constituted by several internal rays of the Fatou components contained in $\x_\alpha$, and $f(H_\alpha)\subset H_\beta$ for some $\beta\in I$.
%\end{proposition}
%\begin{proof}
%Let $z,w$ be two distinct point of $H_\alpha$. Within $\x_\alpha$, we can pick an arc $[z,w]_{\x_\alpha}$ joining $z$ and $w$ such that its intersection with each Fatou component in $\x_\alpha$ consisting of (at most two) segments of internal rays. This is possible because the Fatou components contained in $\x_\alpha$ is finite. Thus $[z,w]_{\x_\alpha}$ is also a regulated arc in $K_f$, and we have $[z,w]_{\x_\alpha}\subset H_f$. Then $H_\alpha$ is path-connected.

%For each Fatou component $U\subset \x_\alpha$, $\ov{U}\cap H_f$ is a regulated arc in $\ov{U}$. It, by the definition of regulated arc in $K_f$, consists of one or two internal rays of $U$. Since $\x_\alpha$ is constituted by the closures of finite Fatou components, then $H_\alpha$ is the union of a few internal rays.

%Note that $f(\x_\alpha)$ is connected, hence it is contained in some non-trivial equivalence class $K_\beta, \beta\in I$. It follows that $f(H_\alpha)\subset f(\x_\alpha)\cap H_f\subset K_\beta\cap H_f=H_\beta$.
%\end{proof}

We now define $T:=\phi(H_f)$. Since each non-trivial $\sim$-equivalence class $\x$  is a regulated tree in $K_f$, then, for any regulated tree $H\subset K_f$, we have $[z,w]\subset \x\cap H$ if $z,w\in \x\cap H$. The intersection of each $\x$ and $H$ is hence either empty, or one point or a sub-tree of $H$. It implies the proposition below.
\begin{proposition}\label{finite-tree}
The restriction of $\phi$ on any regulated tree within $K_f$ is monotone. In particular, $\phi:H_f\to T$ is monotone and $T$ is a finite tree in $\C$.
\end{proposition}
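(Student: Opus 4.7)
The plan is to derive both claims from the observation already recorded in the paragraph just above the statement. Fix a regulated tree $H \subset K_f$. For any essential $\sim$-equivalence class $\x$, the intersection $\x \cap H$ is either empty, a single point, or a regulated subtree of $H$; in each case it is connected. Since every nonempty fiber of $\phi|_H$ has the form $\x \cap H$, this shows $\phi|_H$ is monotone. Specializing $H = H_f$ gives monotonicity of $\phi : H_f \to T$.

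For the final assertion, I would first check that only finitely many essential $\sim$-classes meet $H_f$. The trivial essential classes are singletons in the finite set $\mathrm{crit}(f) \cup \mathrm{post}(f)$. Each non-trivial essential class is a finite tree built from critical or postcritical internal rays; the critical ones are indexed by the finite set $\mathrm{crit}(\Th_\FFF)$, and by Lemma \ref{basic-property}(3) each has a finite forward $f$-orbit, so the collection of critical and postcritical internal rays is finite in total. Consequently only finitely many essential classes $\x_1, \dots, \x_N$ meet $H_f$, and the subtrees $\x_i \cap H_f$ of $H_f$ are pairwise disjoint.

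Using the identification $\C/_\sim \cong \C$ from Lemma \ref{quotient}, the map $\phi|_{H_f}$ is precisely the quotient of the finite tree $H_f$ that collapses each $\x_i \cap H_f$ to a point. Each edge of $H_f$ is sent by the monotone $\phi|_{H_f}$ to a point or to an arc by Proposition \ref{tree-map}, so $T$ is a finite 1-complex; and since collapsing finitely many pairwise disjoint subtrees of a finite tree yields a finite tree (by an easy induction on $N$: contracting a single subtree of a finite tree to a point clearly preserves both the tree property and finiteness), $T$ is a finite tree embedded in $\C$. The only real point of care is the finiteness count for critical and postcritical internal rays; once this is in hand, the remainder is routine tree surgery.
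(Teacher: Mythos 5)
Your proposal is correct and follows essentially the same route as the paper, whose entire argument is the observation in the preceding paragraph that each non-trivial $\sim$-class is allowably connected, so its intersection with a regulated tree is empty, a point, or a subtree, making the fibers of $\phi|_H$ connected. The finiteness bookkeeping you add (finitely many critical and postcritical internal rays via Lemma \ref{basic-property}(3), hence finitely many collapsed subtrees) is exactly what the paper leaves implicit, and it is sound.
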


The vertex set of $T$ is defined to be $V_T=\phi(V_{H_f})$. To characterize the edge set of $T$, we define a subset $E^{\text{col}}_{H_f}$ of the edge set of $H_f$ by
\[E_{H_f}^{\text{col}}=\{e\in E_{H_f}\mid e\text{ is contained in an essential $\sim$-class}\}.\]
Then the edge set of $T$ is equal to $\{\phi(e)\mid e\in E_{H_f}\setminus E_{H_f}^{\text{col}}\}$.

\begin{lemma}\label{star-like}
There is an universal constant $N>0$, such that for any essential $\sim$-class $\xi$ and all integer $n>N$, the set $f^n(\x)$ is either a periodic point in $J_f$, or a star-like tree consisting of periodic internal rays.
\end{lemma}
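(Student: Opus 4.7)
The plan is to produce a universal threshold $N$ by exploiting the finiteness of the dynamical data attached to $\sim$, then handle separately the two types of essential classes from Definition \ref{essential}. The key inputs will be Lemma \ref{basic-property} parts (1), (2), (3), (5) together with the finiteness of $\text{crit}(f) \cup \text{post}(f)$: the set of critical/postcritical internal rays is finite and every such ray is eventually periodic; each critical/postcritical point of $f$ in $J_f$ has a finite, hence eventually periodic, orbit; and each Fatou component hosts at most one periodic critical/postcritical internal ray. I take $N$ to be the largest preperiod appearing among the finitely many critical/postcritical internal rays and the finitely many critical/postcritical points of $f$ in $J_f$.

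Fix $n > N$. If $\xi$ is a trivial essential class, then $\xi = \{z\}$ with $z \in \text{crit}(f) \cup \text{post}(f)$. Such $z$ cannot be the center of a Fatou component $U$: Lemma \ref{basic-property}(1) would then supply a critical/postcritical internal ray $r_U(\theta)$ through $z$, placing $z$ inside a non-trivial $\sim$-class. Hence $z \in J_f$, and by the choice of $N$, $f^n(z)$ is a periodic point of $J_f$, giving the first alternative.

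If instead $\xi$ is a non-trivial essential class, then $\xi$ is a finite tree whose edges are critical/postcritical internal rays. Iterating Lemma \ref{basic-property}(2), $f^n(\xi)$ is a connected union of postcritical internal rays, all periodic by the choice of $N$. Each such periodic ray has one endpoint at a Fatou center (an interior point of a Fatou component) and one at a landing point in $J_f$. By Lemma \ref{basic-property}(5), distinct periodic rays in $f^n(\xi)$ lie in distinct Fatou components, so two such rays can share only landing points in $J_f$; consequently every Fatou center present in $f^n(\xi)$ is a leaf. In particular $f^n(\xi)$ contains no cycles (any closed path would need to traverse a Fatou center, a dead end), so it is genuinely a finite tree.

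The main obstacle is showing that $f^n(\xi)$ is star-like, which reduces to proving it has at most one non-leaf vertex. I argue by contradiction: suppose two distinct non-leaf vertices $p, q$ existed; both must lie in $J_f$, since Fatou centers are leaves. The unique arc in $f^n(\xi)$ from $p$ to $q$ begins with a periodic ray having $p$ as its landing endpoint, so the other endpoint of that initial ray is necessarily a Fatou center; but such a center is a leaf, contradicting its presence strictly inside an arc between two distinct vertices. Hence $f^n(\xi)$ has at most one non-leaf vertex and is star-like, possibly degenerating to a single ray when only one Fatou component contributes. This yields the second alternative of the lemma.
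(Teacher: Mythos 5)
Your proof is correct and follows essentially the same route as the paper: reduce to the finiteness and eventual periodicity of the critical/postcritical internal rays and points via Lemma \ref{basic-property}, then invoke part (5) to place the periodic rays of $f^n(\xi)$ in pairwise distinct Fatou components. You merely spell out the final step --- that connectivity together with ``one ray per component'' forces the rays to meet only at a common landing point, hence form a star --- which the paper's proof leaves implicit.
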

\begin{proof}
Let $\x$ be any essential $\sim$-class. If $\x$ is a point, it is a critical or postcritical point in the Julia set, and hence $f^n(\x)$ is a periodic point in $J_f$ for sufficiently large $n$.

We now assume that $\x$ is not a point. It is then the union of several critical or postcritical internal rays. Note that such internal rays are eventually periodic by iterations of $f$ (Lemma \ref{basic-property}.(3)), so for any sufficiently large $n$, the set $f^n(\x)$ is a tree consisting of periodic postcritical internal rays. By Lemma \ref{basic-property}.(5), distinct internal rays in $f^n(\x)$ lie in different Fatou components. It implies that $f^n(\x)$ is a star-like tree.
\end{proof}

%Denote $\KKK$ the set constituted by $r_x,x\in V_{G,\FFF}$, and set $\KKK^\cup:=\cup_{x\in V_{G,\FFF}}r_x$. Then the non-trivial equivalence classes $\x_\alpha,\alpha\in I$, are just the connected components of $\KKK^\cup$. Since $f(r_x)=r_{\tau(x)}$, the set $\KKK$ has two other expression
%\[\KKK=\{r_{x}\mid x\in \text{crit}(\Theta_\FFF)\}\cup \{r_y\mid y\in \text{post}(\Theta_\FFF)\}=\bigcup_{n\geq0}f^n(\{r_x\mid x\in \text{crit}(\Theta_\FFF)\})\]

%{\noindent \it 3.} It follows directly from the fact that $\tau(\Theta)$ is a singleton and $f(r_x)=r_{\tau(x)}$.

 \subsection {The construction of a Markov map $g:T\to T$}\label{2}

 Clearly, the equivalence relation $\sim$ defined in Section \ref{1} is $f$-invariant, i.e., $x\sim y\Rightarrow f(x)\sim f(y)$. The polynomial $f:\C\to\C$ therefore descends to a map $f/_\sim:\C\to \C$ by $\phi$. Set $g:=f/_\sim$.  The following commutative graph holds.
  \begin{equation}\label{commutative0}\begin{array}{rcl}\C &\xrightarrow[]{\ f\ }  &\C \\ \phi\Big\downarrow &&\Big\downarrow \phi  \\ \C &  \xrightarrow[]{\ g\ } & \C\vspace{-0.1cm}.\end{array}
 \end{equation}
Note that $g\circ\phi=\phi\circ f$ is continuous, hence $g$ is continuous by the functorial property of the quotient topology.

 We call ${\rm crit}(g):=\phi( {\rm crit}(f))$ the \emph{critical set} of $g$, and ${\rm post}(g):=\phi( {\rm post}(f))$ the \emph{postcritical set} of $g$. For any $\theta\in \T$, as $\phi$ is injective within the unbounded Fatou component, it maps the external ray $\RRR_f(\theta)$ to a simple curve, denoted by $\RRR_g(\theta)$, called the \emph{external ray of $g$ with angle $\theta$}.
 %Since we choose $\phi$ to be identity in the neighborhood of $\infty$, then $\RRR_f(\theta)$ and $\RRR_g(\theta)$ coincide in this region.
 Similarly, we denote by $\g_g(\theta)$ the landing point of $\RRR_g(\theta)$. Clearly, we have $\g_g(\theta)=\phi(\g_f(\theta))$ for every $\theta\in \T$.
With the semi-conjugacy $\phi$, the map $g$ inherits many properties of the map $f$.

\begin{proposition}\label{property-g}
\begin{enumerate}
\item  ${\rm post}(g)=\cup_{n\geq1}g^n({\rm crit}(g)), \ g(V_T)\subset V_T$ and   ${\rm post}(g)\subset V_T$.
\item For each $\theta\in \T$, we have $g(\RRR_g(\theta))=\RRR_g(\tau(\theta))$ and $g(\g_g(\theta))=\g_g(\tau(\theta))$.
\item  For each $\theta\in {\rm crit}(\Th)$, \emph{resp.} ${\rm post}(\Th)$, the point $\g_g(\theta)$  belongs to ${\rm crit}(g)$, \emph{resp}. ${\rm post}(g)$.
\item The tree $T$ is $g$-invariant and the  map $g:T\to T$ is Markov.
\end{enumerate}
\end{proposition}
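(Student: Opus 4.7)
The plan is to establish the four assertions in order. All of them follow from the commutative diagram \eqref{commutative0} together with the $f$-invariance of $\sim$ and the monotonicity of $\phi$ on regulated subtrees of $K_f$ (Proposition \ref{finite-tree}).

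For (1), commutativity gives $\phi \circ f^n = g^n \circ \phi$, so ${\rm post}(g) = \phi({\rm post}(f)) = \bigcup_{n\geq 1} g^n(\phi({\rm crit}(f))) = \bigcup_{n\geq 1} g^n({\rm crit}(g))$. The inclusion $g(V_T) \subset V_T$ reduces to $f(V_{H_f}) \subset V_{H_f}$, which follows from Proposition \ref{Hubbard-tree} (edges of $H_f$ map homeomorphically onto unions of edges, forcing endpoints to map to endpoints); then ${\rm post}(g) \subset V_T$ follows from ${\rm post}(f) \subset V_{H_f}$. Part (2) translates through $\phi$: since $\phi$ acts as the identity outside a neighborhood of $K_f$, it sends $\RRR_f(\theta)$ homeomorphically onto $\RRR_g(\theta)$, and $g(\RRR_g(\theta)) = \phi(f(\RRR_f(\theta))) = \phi(\RRR_f(\tau(\theta))) = \RRR_g(\tau(\theta))$, whence the statement for landing points follows by continuity. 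For (3), if $\theta \in {\rm crit}(\Th_\JJJ)$ then $\g_f(\theta) \in {\rm crit}(f) \cap J_f$, so $\g_g(\theta) = \phi(\g_f(\theta)) \in {\rm crit}(g)$; and if $\theta \in \Theta(U) \subset {\rm crit}(\Th_\FFF)$, then $r_U(\theta)$ is a critical internal ray (Definition \ref{critical-internal-ray}), so the essential $\sim$-class of $\g_f(\theta)$ contains the critical center $c_U$, giving $\g_g(\theta) = \phi(c_U) \in {\rm crit}(g)$. The postcritical case then follows by iterating (2): for $\theta = \tau^n(\theta')$ with $\theta' \in {\rm crit}(\Th)$, one has $\g_g(\theta) = g^n(\g_g(\theta')) \in \bigcup_{n \geq 1} g^n({\rm crit}(g)) = {\rm post}(g)$.

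For (4), $g(T) \subset T$ is immediate from $g(T) = \phi(f(H_f)) \subset \phi(H_f) = T$. To verify that $g$ is Markov, I take $V_T$ itself as the Markov set; the condition $g(V_T) \subset V_T$ is part (1), so the remaining task is to show that $g$ is monotone on each component of $T \setminus V_T$, i.e., on each edge $\phi(e')$ with $e' \in E_{H_f} \setminus E_{H_f}^{\text{col}}$. For any $y \in T$,
\[
(g|_{\phi(e')})^{-1}(y) \;=\; \phi\bigl((\phi \circ f|_{e'})^{-1}(y)\bigr),
\]
since $x \sim x'$ implies $f(x) \sim f(x')$, so $g(\phi(x)) = \phi(f(x))$ is independent of the representative $x \in \phi^{-1}(\phi(x)) \cap e'$. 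Now $f|_{e'}$ is a homeomorphism onto the regulated subtree $f(e') \subset H_f$ (Proposition \ref{Hubbard-tree}), and $\phi$ restricted to $f(e')$ is monotone (Proposition \ref{finite-tree}), so $\phi \circ f|_{e'}$ is monotone; hence $(\phi \circ f|_{e'})^{-1}(y)$ is connected, and its image under the continuous map $\phi$ is also connected. This yields monotonicity of $g|_{\phi(e')}$.

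The main subtlety I anticipate is that $\phi|_{e'}$ need not be injective even for $e' \notin E_{H_f}^{\text{col}}$: an edge can meet an essential $\sim$-class along a proper sub-arc, for instance when it leaves a Fatou component $U$ along a critical internal ray through a non-vertex point $\g_f(\theta) \in \partial U$ with $\theta \in \Theta(U)$. This defeats the naive formula $g|_{\phi(e')} = \phi \circ f|_{e'} \circ (\phi|_{e'})^{-1}$, which is why the preimage-level identity displayed above is essential: it uses only monotonicity of the pieces, never injectivity of $\phi|_{e'}$.
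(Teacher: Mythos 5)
Your proof is correct and follows the paper's route for parts (1)--(3) almost verbatim: (1) and (2) are read off the commutative square $\phi\circ f=g\circ\phi$, and (3) splits into the $\Th_\JJJ$ case (where $\g_f(\theta)$ is itself critical/postcritical) and the $\Th_\FFF$ case (where a critical/postcritical internal ray, collapsed by $\phi$, joins $\g_f(\theta)$ to the center of the Fatou component); your handling of ${\rm post}(\Th)$ by iterating (2) is a harmless variant of the paper's direct appeal to Lemma \ref{basic-property}.(1). The only genuine divergence is in (4): the paper takes the finer Markov set coming from the arc system $\Delta_{H_f}$ of Proposition \ref{Hubbard-tree} (the pieces on which $f$ is already a homeomorphism onto an edge) and projects it to $\Delta_T$, so that monotonicity of $g$ on each piece is inherited with almost no work, whereas you take the coarser set $V_T$ and prove monotonicity on each edge $\phi(e')$ directly via the preimage identity $(g|_{\phi(e')})^{-1}(y)=\phi\bigl((\phi\circ f|_{e'})^{-1}(y)\bigr)$. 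Your version buys an explicit treatment of the subtlety you flag --- that $\phi|_{e'}$ need not be injective even for $e'\notin E_{H_f}^{\text{col}}$ --- which the paper's one-line assertion that $g$ maps each element of $\Delta_T$ ``either to a vertex of $T$ or monotonously onto an edge of $T$'' leaves implicit; the paper's version buys a Markov partition that is immediately compatible with the later incidence-matrix bookkeeping. Both are valid, and the identity you display is exactly the right way to push monotonicity through the quotient without injectivity.
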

\begin{proof}
{(1), (2).}  One just need to notice that ${\rm crit}(g),{\rm post}(g)$ and $V_T$ are the images of $ {\rm crit}(f), {\rm post}(f)$ and $V_{H_f}$ by $\phi$. To complete the argument is straightforward using the commutative graph (\ref{commutative0}).

{\noindent (3).}  For any $\theta\in {\rm crit}(\Th)$, \emph{resp.} ${\rm post}(\Th)$, if $\theta\in {\rm crit}(\Th_\FFF)$, \emph{resp.} ${\rm post}(\Th_\FFF)$, by Lemma \ref{basic-property}.(1), there exists a Fatou component $U$ such that a critical internal ray, \emph{resp.} postcritical internal ray $r_{U}(\theta)$ joins $\g_f(\theta)$ and a critical point \emph{resp.} postcritical point of $f$ (the center of $U$); and  if $\theta\in {\rm crit}(\Th_\JJJ)$, \emph{resp.} ${\rm post}(\Th_\JJJ)$, then $\g_f(\theta)\in{\rm crit}(f)$, \emph{resp.} ${\rm post}(f)$. In both case $\phi$ maps $\g_f(\theta)$ to a point of ${\rm crit}(g)$, \emph{resp.} ${\rm post}(g)$.

{\noindent (4).} From the commutative graph (\ref{commutative0}), we get $g(T)=\phi\circ f(H_f)\subset\phi(H_f)=T$. By Proposition \ref{Hubbard-tree}, the tree $H_f$ can be broken into a system of arcs $\Delta_{H_f}$ such that the restriction of $f$ on each one of $\Delta_{H_f}$ is homeomorphic to an edge of $H_f$. Projected by $\phi$, the set of arcs
$$\Delta_T:=\{\phi(\delta)\mid \delta\in\Delta_{H_f}\text{ but } \delta\nsubseteqq\text{ any essential $\sim$-class}\}$$
forms a decomposition of $T$. And by the commutative graph (\ref{commutative0}), the map $g$ maps each one of $\Delta_T$  either to a vertex of $T$ or monotonously onto an edge of $T$. Then $g:T\to T$ is  Markov.
\end{proof}

\subsection{The entropies $h(H_f,f)$ and $h(T,g)$ are equal}

 We have shown that $f:H_f\to H_f$ and $g:T\to T$ are both Markov maps, so, by Proposition \ref{entropy-formula}, it is enough to show that the spectral radii of the
 incidence matrices $D_{(H_f,f)}$ and $D_{(T,g)}$ are equal. To do this, we need the lemma below.

Recall that $E_{H_f}^{\text{col}}=\{e\in H_f\mid e\text{ is contained in an essential $\sim$-class}\}$. By a \emph{cycle} in $E_{H_f}^{\text{col}}$ we mean a subset of $E_{H_f}^{\text{col}}$
in the form \vspace{-5pt}
\[O=\{e_0=e_n,e_1=f(e_0),\ldots, e_n=f(e_{n-1})\}\]

\begin{lemma}\label{f-invariant}
All edges in $E_{H_f}^{\text{col}}$ are attracted by cycles, i.e., for each $e\in E_{H_f}^{\text{col}}$, the sequence of
 iterates $f^n(e)$ ($n\geq0$) eventually falls on the union of cycles in $E_{H_f}^{\text{col}}$.
\end{lemma}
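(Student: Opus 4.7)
The plan is to observe that $f$ induces on $E_{H_f}^{\text{col}}$ the combinatorial structure of a finite directed graph in which every vertex has at least one outgoing arrow, and then to invoke a standard finite-graph argument to conclude that every forward orbit terminates in a cycle.

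The first step is to show that $E_{H_f}^{\text{col}}$ is $f$-invariant in the following sense: for any $e\in E_{H_f}^{\text{col}}$, every edge of $H_f$ contained in $f(e)$ lies again in $E_{H_f}^{\text{col}}$. Since $e$ is a non-degenerate arc, the essential $\sim$-class containing it cannot be a single point, hence by Definition \ref{essential} it must be non-trivial; by the construction of $\sim$, a non-trivial class is a connected union of critical and postcritical internal rays. By Proposition \ref{Hubbard-tree}, $f(e)$ is a union of edges of $H_f$, and by Lemma \ref{basic-property}.(2) the image under $f$ of a critical or postcritical internal ray is again a postcritical internal ray. Therefore $f(e)$ is contained in a connected union of postcritical internal rays, i.e.\ in an essential $\sim$-class, so each edge of $H_f$ lying in $f(e)$ belongs to $E_{H_f}^{\text{col}}$.

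The second step is to encode this dynamics as a directed graph $\mathcal{G}$ whose vertex set is the finite set $E_{H_f}^{\text{col}}$ (finite because $H_f$ has only finitely many edges) and in which there is an arrow $e \to e'$ whenever $e'$ is an edge contained in $f(e)$. By Step 1 together with Proposition \ref{Hubbard-tree}, the set $f(e)$ is a non-empty union of edges, all of which lie in $E_{H_f}^{\text{col}}$; thus every vertex of $\mathcal{G}$ has out-degree at least $1$. Moreover a cycle of $\mathcal{G}$ in the graph-theoretic sense is precisely what the lemma calls a cycle in $E_{H_f}^{\text{col}}$.

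The third and final step is combinatorial. Let $\mathcal{C}\subset E_{H_f}^{\text{col}}$ denote the union of the vertices of all cycles of $\mathcal{G}$. The sub-digraph induced on $E_{H_f}^{\text{col}}\setminus \mathcal{C}$ is acyclic, so any walk confined to it has length bounded by the cardinality of $E_{H_f}^{\text{col}}\setminus \mathcal{C}$; combined with the fact that every vertex has out-degree at least $1$, this forces every sufficiently long forward walk in $\mathcal{G}$ to enter $\mathcal{C}$ and stay there. Consequently, for all $n$ larger than some bound depending only on $|E_{H_f}^{\text{col}}|$, every edge appearing in $f^n(e)$ lies in $\mathcal{C}$, which is exactly the statement of the lemma. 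The only non-combinatorial input of the argument is the invariance established in Step 1, and since it follows directly from Lemma \ref{basic-property}.(2), I do not anticipate any genuine obstacle.
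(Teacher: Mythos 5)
There is a genuine gap in Step 3, and a related misidentification in Step 2. Your Step 1 (forward invariance of $E_{H_f}^{\text{col}}$) is fine and agrees with the paper, but the purely combinatorial conclusion you draw from ``finite digraph with out-degree $\geq 1$'' is false in general, and the extra structure needed to repair it is exactly what the paper's proof supplies. First, the paper's notion of a cycle is $O=\{e_0=e_n,\,e_1=f(e_0),\ldots,e_n=f(e_{n-1})\}$, i.e.\ it requires $f(e_i)$ to be \emph{exactly} the single edge $e_{i+1}$; a graph-theoretic cycle of your digraph $\mathcal{G}$ only requires $e_{i+1}\subset f(e_i)$, so the two notions do not coincide unless you show that out-degree is $1$ along the relevant orbits. (This distinction matters downstream: in Proposition \ref{equal} the blocks $M_j$ have spectral radius $1$ precisely because each $f(e_i)$ is a single edge.) Second, the claim that every sufficiently long walk ``enters $\mathcal{C}$ and stays there'' does not follow from out-degree $\geq 1$ plus acyclicity outside $\mathcal{C}$: a vertex lying on a cycle may also have an out-arrow leaving $\mathcal{C}$. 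Concretely, the digraph with arrows $a\to b$, $b\to a$, $a\to c$, $c\to d$, $d\to d$ has every vertex of out-degree $\geq 1$, yet for every odd $n$ there is a length-$n$ walk from $a$ ending at $c\notin\mathcal{C}$; since $f^n(e)$ is the union over \emph{all} walks of length $n$, the conclusion of the lemma would fail for such a transition structure. So finiteness and out-degree $\geq 1$ alone cannot prove the statement.

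What closes the gap (and what the paper actually proves) is a structural fact about the edges themselves: each $e\in E_{H_f}^{\text{col}}$ is a union of critical/postcritical internal rays, and since every such ray contains a center of a Fatou component, which is a vertex of $H_f$, the edge $e$ consists of at most \emph{two} internal rays. Hence $f(e)$ is either a single edge of $E_{H_f}^{\text{col}}$ or the union of exactly two edges, each of which is a \emph{single} postcritical internal ray. After such a split, each branch is one internal ray and maps forward to one internal ray (out-degree exactly $1$ from then on), and postcritical internal rays are eventually periodic by Lemma \ref{basic-property}.(3). So along any orbit the branching occurs at most once, every branch thereafter is deterministic, and all branches land in genuine cycles (in the paper's strict sense) after a uniformly bounded time. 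Your argument, as written, never rules out repeated branching or branching out of a cycle, which is precisely where it breaks down.
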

\begin{proof}
Let $e\in E_{H_f}^{\text{col}}$. It is by definition the union of several critical or postcritical internal rays. So is $f(e)$. It follows that $f(e)$ is the union of edges in $E_{H_f}^{\text{col}}$.
Notice that each internal ray in $e$ contains a critical or postcritical point, which belongs to $V_{H_f}$,  then the edge $e$, and hence $f(e)$, is  constituted by one or two critical/postcritical internal rays. It follows that $f(e)$ is either still an edge in $E_{H_f}^{\text{col}}$ or the union of two edges in $E_{H_f}^{\text{col}}$ which are both postcritical internal rays.
Repeating this process, we encounter the following two cases:
 \begin{enumerate}
 \item All $f^n(e)$ are edges. In this case $e$ is eventually periodic because $H_f$ is a finite tree.
 \item For some $k\geq1$, the set $f^k(e)$ is the union of two edges in $E_{H_f}^{\text{col}}$, which are both postcritical internal rays. Since every postcritical internal ray is eventually periodic, the iteration of $e$ finally falls on the union of cycles  in $E_{H_f}^{\text{col}}$.
 \end{enumerate}
Then the lemma is proven.
\end{proof}

\begin{proposition}\label{equal}
The entropies $h(H_f,f)$ and $h(T,g)$ are equal.
\end{proposition}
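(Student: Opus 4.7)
The plan is to exploit an invariant partition of $E_{H_f}$ into collapsed and non-collapsed edges, put the incidence matrix $D_{(H_f,f)}$ in block triangular form, and compare the two diagonal blocks.

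Concretely, I would set $E_1:=E_{H_f}^{\text{col}}$ and $E_2:=E_{H_f}\setminus E_1$. The first observation (implicit in the proof of Lemma \ref{f-invariant}) is that for every $e\in E_1$, $f(e)$ is a union of critical or postcritical internal rays and therefore a union of edges of $E_1$. Consequently, arranging the rows and columns with $E_1$ listed before $E_2$, the incidence matrix takes the block form
$$
D_{(H_f,f)}=\begin{pmatrix} B_{11} & 0 \\ B_{21} & B_{22}\end{pmatrix},
$$
whose spectrum is the union of the spectra of $B_{11}$ and $B_{22}$.

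Next, I would identify the lower-right block with $D_{(T,g)}$. The edge set of $T$ was defined as $\{\phi(e):e\in E_2\}$, and the correspondence $e\mapsto\phi(e)$ should be shown to be a bijection (distinct edges of $H_f$ not contained in essential $\sim$-classes can meet such classes only at vertices, so their $\phi$-images meet only at vertices of $T$). From $g\circ\phi=\phi\circ f$ one has $g(\phi(e))=\phi(f(e))$ for $e\in E_2$; the $E_1$-pieces in the decomposition of $f(e)$ are crushed to vertices of $T$ while the $E_2$-pieces become edges of $T$ with matching multiplicity, whence $B_{22}=D_{(T,g)}$.

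Finally, I would bound $\rho(B_{11})\le 1$ using Lemma \ref{f-invariant}: the forward orbit of any $e\in E_1$ eventually lands on a union of cycles in $E_{H_f}^{\text{col}}$, on which $f$ sends each edge to a single edge without further splitting, so the number of edges (with multiplicity) in the decomposition of $f^n(e)$ stays bounded in $n$. This gives $\|B_{11}^n\|=O(1)$ and hence $\rho(B_{11})\le 1$. Combining the three ingredients finishes the proof: if $B_{22}$ is not nilpotent then, being a nonnegative integer matrix, $\rho(B_{22})\ge 1\ge\rho(B_{11})$, so $\rho(D_{(H_f,f)})=\rho(B_{22})$ and Proposition \ref{entropy-formula} yields $h(H_f,f)=\log\rho(B_{22})=h(T,g)$; if $B_{22}$ is nilpotent then $h(T,g)=0$, while $\rho(D_{(H_f,f)})\le 1$ forces $h(H_f,f)=0$ as well. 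The most delicate step will be the bijectivity of $e\mapsto\phi(e)$ on $E_2$ together with the preservation of combinatorial multiplicities, which rest on Proposition \ref{finite-tree} and the precise structure of essential $\sim$-classes given in Definition \ref{essential}.
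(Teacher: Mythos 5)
Your proposal is correct and follows essentially the same route as the paper: order the edges by $E_{H_f}^{\text{col}}$ versus its complement, observe via Lemma \ref{f-invariant} that this gives a block-triangular incidence matrix whose non-collapsed diagonal block is $D_{(T,g)}$, and show the collapsed block has spectral radius $1$. The only cosmetic difference is that the paper bounds the collapsed block by passing to the union of cycles via Proposition \ref{Do3} and computing the cyclic permutation matrices explicitly, whereas you bound $\|B_{11}^n\|$ directly; both rest on the same Lemma \ref{f-invariant}.
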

\begin{proof}
Set $$Y:=\bigcup\ \{e\mid e\in E_{H_f}^{\text{col}}\}.$$
Then $f(Y)\subset Y$ by Lemma \ref{f-invariant}, and  $f:Y\to Y$ is a Markov map. Denoting the incidence matrix of $(Y,f)$ by $M$, it follows from Proposition \ref{entropy-formula} that $h(Y,f)=\log\rho(M)$.

Arranging the edges of $H_f$ in the order  $E_{H_f}^{\text{col}},\ E_{H_f}\setminus E_{H_f}^{\text{col}}$,
 the incidence matrix of $(H_f,f)$ takes the form
\[D_{(H_f,f)}=\left(\begin{array}{cc}
M&\star\\
\mathbf{0}&B
\end{array}\right)
\]
where $\mathbf{0}$ denotes a zero-matrix.
Note that the matrix $B$  is exactly the incidence matrix of $(T,g)$, so it is enough to prove  $\rho(M)=1$, or, equivalently, $h(Y,f)=0$.

 We denote by $O_1,\ldots, O_k$ the cycles in $E_{H_f}^{\text{col}}$, and set $O=\bigcup_{i=1}^k O_i$. Clearly $f(O)\subset O$ and $f:O\to O$ is a Markov map.  Using Lemma \ref{f-invariant}, Propositions \ref{Do3} and \ref{entropy-formula}, we have
 \[h(Y,f)=h(O,f)=\log\rho(D_{(O,f)}).\]
It suffices  to show that $\rho(D_{(O,f)})=1$.

Group the edges of $H_f$ in $O$ in the order $O_1,\ldots, O_k$. Then the incidence matrix $D_{(O,f)}$ takes the form
\[\left(\begin{array}{ccc}
M_1&&\\
&\ddots&\\
&&M_k
\end{array}\right)
\]
Fix $j\in[1,k]$. Denote the cycle of edges in $O_j$ by
$e_0\mapsto e_1\mapsto\cdots\mapsto e_n=e_0.$
If we further arrange the edges in $O_j$ in the order
$e_0,\ e_1,\ \ldots,\ e_n,$
 the matrix $M_{j}$ takes the form
\[\left(\begin{array}{ccccc}
0&&&1\\
1&0&&&\\
&\ddots&\ddots&\\
&&1&0\\
\end{array}\right)
\]
Clearly, $\rho(M_{j})=1$.
\end{proof}

%By now, we finish one part of the proof of Proposition \ref{Thurston-algorithm}, that is, $h(H_f,f)=h(T,g)$. In the following, we concentrate to the other part of the proof: $h(G,L)=h(T,g)$.

\subsection{The partition induced by $\Th$ in the $g$-plane}\label{partition-in-g}

In Section \ref{partition-in-f} we gave the partition  in the $f$-plane induced by $\Th$.
 Using the projection $\phi$, we can transfer the partition in the $f$-plane to the $g$-plane.

 For any $\Theta\in\Th$, since $\phi$ collapses each critical and postcritical internal ray, then, by the construction of $\RRR_f(\Theta)$ given in \eqref{eq:1}, all external rays in the $g$-plane with arguments in $\Theta$ land at a common critical point
\begin{equation}\label{eq:critical}
c_\Theta:=\phi(\RRR_f(\Theta)\cap K_f)
\end{equation}
of $g$, and
\begin{equation}\label{intersection}
\RRR_g(\Theta):=\phi(\RRR_f(\Theta))=\big(\cup_{\theta\in\Theta}\RRR_g(\theta)\big)\cup\{c_\Theta\}.
\end{equation}
Similarly, we can also define the unlinked equivalence relation in the $g$-plane.

\begin{definition}[unlink equivalent in the $g$-plane]
 Two points $z,w\in \C\setminus\cup_{i=1}^{m+n}\RRR_g(\Theta_i)$ are said to be \emph{unlink equivalent in the $g$-plane} if they belong to a common component of $\C\setminus \RRR_g(\Theta_i)$ for every $\Theta_i\in\Th$.
\end{definition}

Note that the intersection of each $\RRR_f(\Theta_i)$ and $K_f$ is either a critical point or the union of critical internal rays, then the following result is straightforward.

\begin{proposition}\label{partition-g}
Let $V_1,\ldots,V_d$ be the unlinked equivalence classes in the $f$-plane.
Set $W_i:=\phi(V_i), i=1,\ldots,d$. Then each $W_i$ is an unlinked equivalence class in the $g$-plane. And we have $\ov{W_i}=\phi(\ov{V_i})$ and $\partial W_i=\phi(\partial V_i)$.
\end{proposition}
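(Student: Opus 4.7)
The plan is to deduce all three assertions from the single preimage identity $\phi^{-1}(\RRR_g(\Theta_j))=\RRR_f(\Theta_j)$ for every $j\in\{1,\ldots,m+n\}$, together with general topological properties of the quotient map $\phi$.

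The first step is to verify this preimage identity. The inclusion $\RRR_f(\Theta_j)\subset\phi^{-1}(\RRR_g(\Theta_j))$ is immediate from $\RRR_g(\Theta_j)=\phi(\RRR_f(\Theta_j))$. For the converse, I would consider $z$ with $\phi(z)\in\RRR_g(\Theta_j)$, so that $z\sim z'$ for some $z'\in\RRR_f(\Theta_j)$. Since $\phi$ is the identity outside a small neighborhood of $K_f$, one may assume $z\in K_f$ and lies with $z'$ in a non-trivial essential $\sim$-class, i.e., a connected tree of critical or postcritical internal rays. Then using Lemma \ref{basic-property}---especially part (4), which forces the critical or postcritical internal rays inside the closure of a critical Fatou component $U$ to be precisely the rays $r_U(\theta)$ with $\theta\in\Theta(U)$---one checks that any such tree meeting $\RRR_f(\Theta_j)$ remains inside $\bigcup_k\RRR_f(\Theta_k)\cap K_f$, whence $z\in\RRR_f(\Theta_j)$.

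Once the preimage identity is in hand, taking complements gives $\phi^{-1}(\C\setminus\bigcup_j\RRR_g(\Theta_j))=\C\setminus\bigcup_j\RRR_f(\Theta_j)=V_1\sqcup\cdots\sqcup V_d$, and surjectivity of $\phi$ yields $\C\setminus\bigcup_j\RRR_g(\Theta_j)=W_1\cup\cdots\cup W_d$. Each $W_i$ is connected as the continuous image of a connected set. The $W_i$'s are pairwise disjoint: if $\phi(v_i)=\phi(v_j)$ for $v_i\in V_i$ and $v_j\in V_j$, then the essential class joining them stays inside $V_1\sqcup\cdots\sqcup V_d$ by the same rigidity argument, hence inside a single $V_k$ by connectedness, forcing $i=j$. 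This decomposes $\C\setminus\bigcup_j\RRR_g(\Theta_j)$ into exactly $d$ connected components $W_1,\ldots,W_d$, so each $W_i$ is an unlinked equivalence class in the $g$-plane.

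For $\ov{W_i}=\phi(\ov{V_i})$, continuity of $\phi$ gives $\phi(\ov{V_i})\subset\ov{W_i}$, and since by Lemma \ref{quotient} the map $\phi$ extends to a quotient map between compact Hausdorff spaces on $\widehat\C$ it is automatically closed; thus $\phi(\ov{V_i})$ is closed and contains $W_i$, yielding $\ov{W_i}\subset\phi(\ov{V_i})$. For $\partial W_i=\phi(\partial V_i)$, since $W_i$ is open we have $\partial W_i=\ov{W_i}\setminus W_i$; writing $\phi(\ov{V_i})=W_i\cup\phi(\partial V_i)$ and noting that $\phi(\partial V_i)\subset\bigcup_j\RRR_g(\Theta_j)$ is disjoint from $W_i$, we conclude. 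The main obstacle is the preimage identity itself, into which the combinatorial content of the weak critical markings (Lemma \ref{basic-property}) is concentrated; the remaining arguments are formal topology.
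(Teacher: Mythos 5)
Your overall strategy is reasonable and close in spirit to what the paper intends (the paper offers no real proof here, only the remark that each $\RRR_f(\Theta_i)\cap K_f$ is a critical point or a union of critical internal rays, hence collapsed by $\phi$). But the identity you make the cornerstone of the argument, $\phi^{-1}(\RRR_g(\Theta_j))=\RRR_f(\Theta_j)$ for each individual $j$, is false, and the paper's own Example \ref{no-well} shows it. For $f(z)=z^3-\tfrac{3}{2}z$ with $\Th=\{\Theta(U_A)=\{7/12,1/4\},\ \Theta(U_B)=\{11/12,1/4\}\}$, the critical internal rays $r_{U_A}(1/4)$ and $r_{U_B}(1/4)$ share the landing point $\g(1/4)$, so all four critical internal rays lie in a single $\sim$-class; hence $c_{\Theta(U_A)}=c_{\Theta(U_B)}$, and $\phi^{-1}\bigl(\RRR_g(\Theta(U_A))\bigr)$ contains $r_{U_B}(11/12)$, which is not in $\RRR_f(\Theta(U_A))$. (The second example of \ref{no-well}, where the class of the critical point $-1$ absorbs the critical internal rays of $U$, gives another counterexample.) Your own justification only delivers that the fiber over $c_{\Theta_j}$ stays inside $\bigcup_k\RRR_f(\Theta_k)\cap K_f$; the concluding ``whence $z\in\RRR_f(\Theta_j)$'' is a non sequitur.

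The good news is that everything downstream only uses the union version $\phi^{-1}\bigl(\bigcup_j\RRR_g(\Theta_j)\bigr)=\bigcup_j\RRR_f(\Theta_j)$, so the proof is repairable by working with the union throughout. Two caveats remain. First, even the union version requires that an essential $\sim$-class meeting some $\RRR_f(\Theta_j)\cap K_f$ be entirely contained in $\bigcup_k\bigl(\RRR_f(\Theta_k)\cap K_f\bigr)$; Lemma \ref{basic-property}(4) only controls internal rays inside critical Fatou components, so you must also rule out a postcritical internal ray of a non-critical Fatou component attaching to a critical internal ray at a common landing point. This is exactly the point the paper's ``straightforward'' glosses over, and it is where the real content lies. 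Second, pairwise disjoint connected sets covering $\C\setminus\bigcup_j\RRR_g(\Theta_j)$ are not automatically its connected components: you need each $W_i$ to be open, which follows from $\phi^{-1}(W_i)=V_i$ (a consequence of disjointness together with the union identity) and the fact that $\phi$ is a quotient map; and you should add a word on why the $d$ components of the complement of the union coincide with the unlinked classes, which are defined via each $\RRR_g(\Theta_i)$ separately (mirror Proposition \ref{common}(1)). Your derivation of $\ov{W_i}=\phi(\ov{V_i})$ and $\partial W_i=\phi(\partial V_i)$ from closedness of $\phi$ on the sphere is fine.
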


With these preparations, we can prove the key lemma below. Note that for any angle $x\in V_G$, by (1) and (3) of Proposition \ref{property-g}, the landing point $\g_g(x)$ of the ray $\RRR_g(x)$ belongs to $T$. %which serves the construction of a semi-conjugacy from $L:G\to G$ to $g:T\to T$.

\begin{lemma}\label{injective-g}
 Let $x,y$ be two distinct points in $V_G$.
  \begin{enumerate}
  \item If $x,y$ are separated by $\Theta\in \Th$, then $[\g_g(x),\g_g(y)]_T$ intersects the critical point $c_\Theta$ of $g$ given in \eqref{eq:critical}.
  \item If $x,y$ are non-separated by $\Th$, then $\g_g(x)$ and $\g_g(y)$ belong to the closure of  an unlinked equivalence class  in the $g$-plane, and $g$ maps $[\g_g(x),\g_g(y)]_T$ monotonously onto $[\g_g(\tau(x)),\g_g(\tau(y))]_T$.
%(We allow $[\g_g(x),\g_g(y)]_T$ or $[\g_g(\tau(x)),\g_g(\tau(y))]_T$  to be reduced to one point).
  \end{enumerate}
\end{lemma}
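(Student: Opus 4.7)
I will treat the two parts separately, making essential use of the partition of the $g$-plane set up in Section~\ref{partition-in-g} and the analogous structure on the $f$-side recorded in Lemma~\ref{injective-f}.

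For part (1), I would argue topologically. By \eqref{intersection} the set $\RRR_g(\Theta)$ is a star-like graph whose rays meet only at the critical point $c_\Theta$, and hence separates $\C$ into $\#\Theta$ open components. Since $x,y$ are separated by $\Theta$ on the circle, the external rays $\RRR_g(x)$ and $\RRR_g(y)$ lie in distinct components, so their landing points $\g_g(x),\g_g(y)$ lie in the closures of different components. Any continuous path from $\g_g(x)$ to $\g_g(y)$ must then meet $\RRR_g(\Theta)$; but $[\g_g(x),\g_g(y)]_T\subset T\subset \phi(K_f)$, while $\RRR_g(\Theta)\setminus\{c_\Theta\}$ lies in the basin of infinity. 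The only possible intersection point is therefore $c_\Theta$.

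For part (2), I would lift the situation to the $f$-plane. By Lemma~\ref{separation} and Proposition~\ref{common}(1), there is an unlinked equivalence class $V$ in the $f$-plane whose closure contains $\RRR_f(x)$ and $\RRR_f(y)$, so $\g_f(x),\g_f(y)\in\ov V\cap K_f$. Projecting by $\phi$ and invoking Proposition~\ref{partition-g}, with $W:=\phi(V)$ I obtain $\g_g(x),\g_g(y)\in\ov W$, which gives the first assertion. For the monotonicity, choose lifts $z_x,z_y\in H_f$ of $\g_g(x),\g_g(y)$: if $x\in\mathrm{post}(\Th_\JJJ)$ take $z_x=\g_f(x)$, and if $x\in\mathrm{post}(\Th_\FFF)$ take $z_x$ to be the center of the postcritical Fatou component supported by $\RRR_f(x)$ (similarly for $y$). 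In both cases $z_x,z_y\in\ov V\cap K_f$, so by Lemma~\ref{injective-f}(1) the regulated arc $[z_x,z_y]$ lies in $\ov V\cap K_f$. Proposition~\ref{finite-tree} then makes $\phi$ monotone on $[z_x,z_y]$, and Proposition~\ref{tree-map} identifies its image as an arc in $T$ connecting $\g_g(x)$ and $\g_g(y)$, which by uniqueness equals $[\g_g(x),\g_g(y)]_T$.

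It remains to see that $g$ acts monotonously on this arc and with the correct image. Apply Lemma~\ref{injective-f}(4) to $[z_x,z_y]$: $f$ is injective on $[z_x,z_y]\setminus\partial V$, and $f([z_x,z_y])$ is contained in a regulated arc. The intersection $[z_x,z_y]\cap\partial V$ decomposes into pieces that are either single critical points of $f$ in $J_f$ or critical internal rays, and these are precisely the essential $\sim$-classes intersecting $[z_x,z_y]$, hence collapsed to points by $\phi$. Using the commutative relation $g\circ\phi=\phi\circ f$, it follows that $g$ is injective on $\phi([z_x,z_y])=[\g_g(x),\g_g(y)]_T$, hence monotone, with image $\phi(f([z_x,z_y]))$. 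This image is a subarc of $T$ with endpoints $g(\g_g(x))=\g_g(\tau(x))$ and $g(\g_g(y))=\g_g(\tau(y))$, so by uniqueness it equals $[\g_g(\tau(x)),\g_g(\tau(y))]_T$. The main obstacle is the bookkeeping in the last step: one must carefully check that the ``bad'' segments of $[z_x,z_y]\cap\partial V$, where $f$ may fail to be injective, correspond \emph{exactly} to single essential $\sim$-classes, so that $\phi$ converts the non-injectivity of $f$ on $\partial V$ into a genuine injectivity of the induced map $g$ on the quotient arc.
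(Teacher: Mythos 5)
Your part (1) is correct; it is the paper's argument transported to the $g$-plane (the paper instead shows directly that the connected set $\RRR_f(\Theta)$ must meet $[\g_f(x),\g_f(y)]$ and then pushes forward by $\phi$), and either version works. There is also a small imprecision at the start of your part (2): when $x\in\Theta(U)$ lies on the ``far side'' of $hull(\Theta(U))$ from the unlinked class $I$, the ray $\RRR_f(x)$ and even its landing point $\g_f(x)\in\partial U$ need \emph{not} lie in $\ov V$; what saves the statement is that the whole set $\RRR_f(\Theta(U))\cap K_f$ is collapsed by $\phi$ to the single point $c_{\Theta(U)}\in\phi(\ov V)$, which is how the paper phrases it. This is repairable, but as written your choice of lifts $z_x,z_y$ is not justified to land in $\ov V\cap K_f$ in that case.

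The genuine gap is in the last step of part (2): you deduce monotonicity of $g$ on $[\g_g(x),\g_g(y)]_T$ from a claimed \emph{injectivity}, and that injectivity is false in general. Your mechanism accounts for only one source of non-injectivity of $g$, namely the folding of $f$ along $[z_x,z_y]\cap\partial V$, which is indeed neutralized because those pieces lie in essential $\sim$-classes and are collapsed by $\phi$. But there is a second source you do not address: $\phi$ also collapses essential $\sim$-classes met by the \emph{image} $f([z_x,z_y])$ whose preimages inside $[z_x,z_y]$ are not themselves collapsed. Concretely, a nondegenerate subarc $[p,q]\subset A_i$ on which $f$ is injective and which $\phi$ does not collapse (for instance an internal ray of a preimage Fatou component that is not critical or postcritical) can have $f([p,q])$ contained in a single postcritical internal ray; then $\phi([p,q])$ is a nondegenerate subarc of $[\g_g(x),\g_g(y)]_T$ on which $g$ is constant. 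So $g$ is monotone but not injective, and your route to monotonicity breaks down. This is precisely why the paper never argues injectivity: it fixes a point in the image, takes its $\phi$-fiber $Y$ (a point or a regulated tree), intersects it with the regulated arc $L\supset\bigcup_i E_i$ constructed in Lemma \ref{injective-f}(4) to obtain a segment $R$, pulls $R$ back to a segment $R'\subset[z,w]$ using the injectivity of $f$ on $\bigcup_i A_i$, and identifies the $g$-fiber with $\phi\bigl(R'\cap\bigcup_i A_i\bigr)$, which is a point \emph{or a closed segment} --- connected in either case. That fiber analysis is the missing content, and the ``bookkeeping'' you defer is not about the $\partial V$ segments at all but about the collapsing that happens downstream in the image.
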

\begin{proof}% (1).
\begin{enumerate}
\item By Proposition \ref{finite-tree}, the map $\phi$ is monotonous when restricted on $[\g_f(x),\g_f(y)]$, so $\phi\big([\g_f(x),\g_f(y)]\big)=[\g_g(x),\g_g(y)]_T$ by Proposition \ref{tree-map}.  Since $c_\Theta=\phi(\RRR_f(\Theta)\cap K_f)$, we remain to show that
$\RRR_f(\Theta)\cap[\g_f(x),\g_f(y)]\not=\emptyset$.

By looking at the circle at infinity,  the rays $\RRR_f(x)$ and $\RRR_f(y)$ lie in different components of $\C\setminus\RRR_f(\Theta)$.  As $\RRR_f(\Theta)$ is connected, then the intersection of $\RRR_f(\Theta)$ and $[\g_f(x),\g_f(y)]$ is non-empty.

%{\noindent (2).}
 \item By Lemma \ref{separation}, there exist an unlink equivalence $I_k$ on $\T$ and two elements $\Theta_i,\Theta_j\in \Th$ intersecting $\ov{I_k}$ such that $x,y\in I_k\cap\Theta_i\cap\Theta_j$. Correspondingly, in the dynamical plane of $f$, the points $\g_f(x)$ and $\g_f(y)$ belong to $\ov{V_k}\cup\RRR_f(\Theta_i)\cup\RRR_f(\Theta_j)$. Note that $\phi(\RRR_f(\Theta_i)\cap K_f)$ and $\phi(\RRR_f(\Theta_j)\cap K_f)$ are points contained in $\phi(\ov{V_k})$, then $\g_g(x)$ and $\g_g(y)$ belong to $\ov{W_k}=\phi(\ov{V_k})$.

To prove the latter part of (2), note that $g(\g_g(\theta))=\g_g(\tau(\theta))$ for any $\theta\in \T$ (Proposition \ref{property-g}), so, using Proposition \ref{tree-map}, we just need to prove that $g|_{[\g_g(x),\g_g(y)]}$ is monotone.

Let $W$ be an unlinked equivalence class in the $g$-plane with $\g_g(x),\g_g(y)\in\ov{W}$. Since $\g_g(x),\g_g(y)$ also belong to $ T$,  there exist two points $z,w\in \ov{V}\cap H_f$ such that $\phi(z)=\g_g(x)$ and $\phi(w)=\g_g(y)$, where $V$ is the unlinked equivalence class in the $f$-plane with $\phi(V)=W$. As $\phi|_{[z,w]}$ is monotone (Proposition \ref{finite-tree}), then, by Proposition \ref{tree-map}, we have $\phi([z,w])=[\g_g(x),\g_g(y)]_T$.

We apply the formula $\phi\circ f=g\circ \phi$ on $[z,w]$, and use the notations in the proof of  Lemma  \ref{injective-f}.(4). Recall that $A_1,\ldots,A_m$ denotes
the closures of the connected component of $[z,w]\setminus\partial  V$ such that each $A_i$ separates $A_{i-1}$ and $A_{i+1}$, and $E_i=f(A_i)$ for $i=1,\ldots,m$.
Note first that $[\g_g(x),\g_g(y)]_T=\phi([z,w])=\phi(\bigcup_{i=1}^m A_i)$ and
\begin{equation}\label{eq*}
g([\g_g(x),\g_g(y)]_T)=g\circ\phi(\cup_{i=1}^m A_i)=\phi\circ f(\cup_{i=1}^m A_i)=\phi(\cup_{i=1}^m E_i).
\end{equation}
Let $y$ be any point in $g([\g_g(x),\g_g(y)]_T)$. We will show that $(g|_{[\g_g(x),\g_g(y)]})^{-1}(y)$ is connected. Note that the fiber $Y:=\phi^{-1}(y)$ is either a point or a regulated tree in $K_f$. Denote by $L$ the regulated arc containing all $E_i, i=1,\ldots,m,$ constructed in the proof of Lemma \ref{injective-f}.(4). Then
\begin{equation}\label{eq4}
Y\cap \big(\cup_{i=1}^m E_i\big)=(Y\cap L)\cap \big(\cup_{i=1}^m E_i\big)=R\cap \big(\cup_{i=1}^m E_i\big),
\end{equation}
where $R:=Y\cap L$  is  a segment in $L$. There is thus a segment $R'$ in $[z,w]$ such that $f(R'\cap \big(\cup_{i=1}^m A_i\big))=R\cap \big(\cup_{i=1}^m E_i\big)$. Since $f$ is injective on $\bigcup_{i=1}^m A_i$, then
\begin{equation}\label{eq5}
f^{-1}(R\cap \big(\cup_{i=1}^m E_i\big))\cap \big(\cup_{i=1}^m A_i\big)=R'\cap \big(\cup_{i=1}^m A_i\big).
\end{equation}
By (\ref{eq*}), (\ref{eq4}) and (\ref{eq5}), the set  $\phi(R'\cap \bigcup_{i=1}^m A_i)$ is exactly the fiber $$(g|_{[\g_g(x),\g_g(y)]_T})^{-1}(y)=g^{-1}(y)\cap [\g_g(x),\g_g(y)]_T.$$
And  it is by the construction of $A_i$ either a point or a closed segment. Thus, $g|_{[\g_g(x),\g_g(y)]_T}$ is monotone
as we wishes to show.
\end{enumerate}
\end{proof}

\subsection{The construction of a projection  $\Phi:G\to T$}\label{6}

Recall that $G$ is a topological complete  graph with the
vertex set ${\rm post}(\Th)$.
Proposition \ref{property-g}.(3) says that each external ray in the $g$-plane with argument in $V_G$ lands at a postcritical point of $g$.  We then define a map $\Phi:V_G\to {\rm post}(g)$ by $\Phi(x)=\g_g(x)$ for $x\in V_G$, where $\g_g(x)$ denotes the landing point of $\RRR_g(x)$.

\begin{lemma}\label{property-phi}
The map $\Phi$ is surjective, and satisfies $g\circ\Phi=\Phi\circ\tau$.
\end{lemma}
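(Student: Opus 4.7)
The plan is to verify the two claims separately; the semi-conjugacy identity will be essentially a restatement of Proposition \ref{property-g}.(2), while surjectivity requires a short case analysis depending on whether the relevant critical point lies in the Julia set or in a critical Fatou component.

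First I would note that $\tau(V_G)\subset V_G$: by definition, $V_G={\rm post}(\Th)=\bigcup_{n\geq1}\tau^n({\rm crit}(\Th))$ is forward invariant under $\tau$, so the composition $\Phi\circ\tau$ makes sense. For any $x\in V_G$, Proposition \ref{property-g}.(2) gives $g(\gamma_g(x))=\gamma_g(\tau(x))$, i.e.\ $g\circ\Phi(x)=\Phi\circ\tau(x)$, which is the desired semi-conjugacy.

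For surjectivity, fix $p\in{\rm post}(g)=\phi({\rm post}(f))$ and choose $q\in{\rm post}(f)$ with $\phi(q)=p$. Write $q=f^n(c)$ for some $n\geq 1$ and some critical point $c$ of $f$. I would split according to where $c$ sits. If $c\in J_f$, then $c=c_j$ for some $j$, so any angle $\theta\in\Theta_j(c_j)\subset{\rm crit}(\Th_\JJJ)$ satisfies $\gamma_f(\theta)=c$. Setting $\eta:=\tau^n(\theta)\in{\rm post}(\Th_\JJJ)\subset V_G$, Proposition \ref{property-g}.(2) together with $\gamma_g(\theta)=\phi(\gamma_f(\theta))$ gives
\[
\Phi(\eta)=\gamma_g(\tau^n(\theta))=g^n(\gamma_g(\theta))=\phi(f^n(\gamma_f(\theta)))=\phi(f^n(c))=\phi(q)=p.
\]
If instead $c$ lies in a critical Fatou component $U$ (so $c$ is the center of $U$), pick any $\theta\in\Theta(U)\subset{\rm crit}(\Th_\FFF)$. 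By Lemma \ref{basic-property}.(1)--(2), the internal ray $r_U(\theta)$ is a critical internal ray and $f^n(r_U(\theta))=r_{f^n(U)}(\tau^n(\theta))$ is a postcritical internal ray joining $f^n(c)=q$ to the landing point $\gamma_f(\tau^n(\theta))$. Since $\phi$ collapses postcritical internal rays, $\phi(q)=\phi(\gamma_f(\tau^n(\theta)))=\gamma_g(\tau^n(\theta))$, so setting $\eta:=\tau^n(\theta)\in{\rm post}(\Th_\FFF)\subset V_G$ gives $\Phi(\eta)=p$.

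The main subtlety is the Fatou case: one must actually use that critical internal rays map forward to postcritical internal rays (Lemma \ref{basic-property}.(2)) and then get collapsed by $\phi$, so that the center $q$ and the boundary landing point $\gamma_f(\tau^n(\theta))$ are identified in the quotient. Once this identification is in hand, both cases are uniform and surjectivity follows. Everything else is a bookkeeping check using the commutative diagram \eqref{commutative0} and the definition of $\gamma_g$.
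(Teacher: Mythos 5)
Your proof is correct and follows essentially the same route as the paper: the semi-conjugacy is read off from Proposition \ref{property-g}.(2), and surjectivity comes from producing, for each postcritical point, an angle in $V_G$ whose ray lands there or is joined to it by a postcritical internal ray that $\phi$ collapses. Your case split on whether the critical point lies in $J_f$ or is the center of a critical Fatou component merely spells out the existence claim that the paper asserts ``by the construction of $\Th$,'' so it is the same argument with more detail.
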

 \begin{proof}
By the construction of $\Th$, for each postcritical point $z$,  there exists an angle $x\in V_G$ such that either $\g_f(x)=z$ or a postcritical internal ray $r_{U}(x)$ joins $z$ and $\g_f(x)$.  In both cases, we have
$\phi(z)=\phi(\g_f(x))=\g_g(x)=\Phi(x).$
Since ${\rm post}(g)=\phi({\rm post}(f))$, then $\Phi$ is surjective.
 By Proposition \ref{property-g}.(2), we get
$g\circ\Phi(x)=g(\g_g(x))=\g_g(\tau(x))=\Phi\circ\tau(x)$ for each $x\in V_G$.
\end{proof}

 We continuously extend $\Phi$ to a map, also denoted by $\Phi$,  from $G$ to $T$ such that $\Phi$ maps each edge $e(x,y)$ of $G$ monotonously onto $[\Phi(x),\Phi(y)]_T$.
The case that $\Phi(x)=\Phi(y)$ may happen, in which  $[\Phi(x),\Phi(y)]_T$  reduces to a point. We thus obtain a projection from $G$ to $T$.

 \begin{proposition}\label{surjective}
 The projection $\Phi:G\to T$ is surjective.
 \end{proposition}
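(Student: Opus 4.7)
The plan is to argue that the image $\Phi(G)$ contains every endpoint of $T$ and then use the complete-graph structure of $G$ to conclude. Because $\Phi$ maps each edge $e(x,y)$ monotonously onto the regulated arc $[\Phi(x),\Phi(y)]_T$, and since by Lemma \ref{property-phi} we have $\Phi(V_G)={\rm post}(g)$, the image decomposes as
\[
\Phi(G)=\bigcup_{x,y\in V_G}[\Phi(x),\Phi(y)]_T=\bigcup_{p,q\in{\rm post}(g)}[p,q]_T.
\]
Since in a finite tree every point lies on an arc between two endpoints, it suffices to show that every endpoint of the finite tree $T$ belongs to ${\rm post}(g)$.

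The key step is this endpoint analysis, which I would carry out via the quotient map $\phi:H_f\to T$. Let $p$ be an endpoint of $T$ and set $\xi:=\phi^{-1}(p)\cap H_f$. By Proposition \ref{finite-tree}, the restriction $\phi|_{H_f}$ is monotone, so $\xi$ is a connected subtree (possibly a single point) of $H_f$. For any $q\in T\setminus\{p\}$, the fiber $\phi^{-1}(q)\cap H_f$ is likewise connected and disjoint from $\xi$, hence sits entirely inside one component of $H_f\setminus\xi$. Consequently the images under $\phi$ of distinct components of $H_f\setminus\xi$ are pairwise disjoint connected subsets whose union is $T\setminus\{p\}$. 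But $T\setminus\{p\}$ is connected, since $p$ is an endpoint, so $H_f\setminus\xi$ must be connected, which in a finite tree is only possible when $\xi$ contains an endpoint $z$ of $H_f$.

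Endpoints of $H_f$ lie in ${\rm post}(f)$, because $H_f$ is the regulated hull of ${\rm post}(f)$, and therefore $p=\phi(z)\in\phi({\rm post}(f))={\rm post}(g)$, as required. Assembling the pieces yields
\[
T=\bigcup_{p,q\in{\rm post}(g)}[p,q]_T=\Phi(G),
\]
proving surjectivity. The degenerate case ${\rm post}(\Th)=\{x\}$ (where both $G$ and $T$ collapse to a point) is trivial. The main obstacle is the endpoint claim: while superficially intuitive, it genuinely requires combining the monotonicity of $\phi|_{H_f}$, the connectedness of all fibers, and the tree-topological observation that removing an ``interior'' subtree of a finite tree necessarily disconnects it.
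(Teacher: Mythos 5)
Your proof is correct and follows essentially the same route as the paper: reduce to showing that every endpoint of $T$ lies in ${\rm post}(g)$, then use that $G$ is a complete graph on a vertex set mapping onto ${\rm post}(g)$. The paper merely asserts that $\phi$ carries endpoints of $H_f$ onto endpoints of $T$, whereas you actually prove it via the fiber argument; the only imprecision is that ``pairwise disjoint connected sets with connected union'' does not by itself force a single piece --- you also need each image $\phi(C_i)$ of a component of $H_f\setminus\xi$ to be open in $T$ (which does follow, since every fiber of $\phi|_{H_f}$ over a point distinct from $p$ is connected and hence lies in a single $C_i$), after which the conclusion is immediate.
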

\begin{proof}
The Hubbard tree is by definition the regulated hull of postcritical points. It follows that the endpoints of $H_f$ belong to $ {\rm post}(f)$. Note that $\phi$ maps the endpoints of $H_f$ onto those of $T$, then the endpoints of $T$ belong to ${\rm post}(g)$. Since $G$ is a complete graph,
the arc $[p,q]_T$ is contained in $\Phi(G)$ for any endpoints $p,q$ of $T$.
We only need to invoke the fact that each edge of $T$ is contained in a regulated arc $[p,q]$ with $p,q$ two endpoints.
\end{proof}

In the construction, the projection $\Phi$ is only required to be monotone on each edge of $G$, so it is not necessarily a semi-conjugacy from $L:G\to G$ to $g:T\to T$. One may ask whether we can  impose some additional  conditions on the extension  such that $\Phi$ becomes a semi-conjugacy.
 The answer might be yes, but we will not work on this. One reason is that to change $\Phi$ into a semi-conjugacy, we need to carefully  analyse where $g$ is not injective on each edge of $T$ and  correspondingly modifying $L$ piecewise on each edge of $G$, which is a tedious work. The other reason, which is crucial, is that even if we modify $\Phi$ to a semi-conjugacy, it is  generally not finite to one, because we can only require that $\Phi$ is monotone, but not a homeomorphism, restricted on each edge of $G$, so that Proposition \ref{Do3} is not available.

 Instead, we will suitably modify $\Phi$ on each edge of $G$ such that the relation $g\circ \Phi=\Phi\circ f$ is satisfied in a weaker sense (see Lemma \ref{commutative2} below).

%For the convenience of the discussion below, we introduce a map $\iota$ from the vertex set $V_G$ to the symbol set $\{\FFF,\JJJ\}$ such that
%\[\iota(x)=\left\{
 %            \begin{array}{ll}
  %             \FFF, & \hbox{if $x\in V_{G,\FFF}\setminus V_{G,\JJJ}$} \\
   %            \JJJ, & \hbox{if $x\in V_{G,\JJJ}$}
    %         \end{array}
     %      \right.
%\quad x\in V_G\]
%It is easy to check that $f(\psi_{\iota(x)}(x))=\psi_{\iota(\tau(x))}(\tau(x))$. With this notation, the map $\Phi$ can be defined as $\Phi(x)=\phi\circ\psi_{\iota(x)}(x)$ for $x\in V_G$.

Recall the definition $L$  in Section \ref{T-A}.
If we specify that a separation set with the form $(k_1,\ldots,k_0)$ is empty, then the action of $L$ on an edge $e(x,y)\in E_G$ can be uniformly expressed as follows:

Let the ordered pair $x,y$ have the separation set $(k_1,\ldots,k_p), p\geq0$, i.e., the leaf $\ov{xy}$ successively crosses $hull(\Theta_{k_1}),\ldots,hull(\Theta_{k_p})$ from $x$ to $y$, and no other elements of $\Th$ separate $x$ and $y$. Subdivide the edge $e(x,y)$ into $p+1$  arcs $\delta(z_i,z_{i+1}), i\in[0,p],$ with $z_0:=x$ and $z_{p+1}:=y$, such that these arcs are non-trivial if $x\not=y$. And then
let $L$ map each arc $\delta(z_i,z_{i+1})$ monotonously onto $e(\tau(\theta_i),\tau(\theta_{i+1}))$, where $\theta_0:=x$, $\theta_{p+1}:=y$ and $\theta_i\in \Theta_{k_i}$ for each $i\in[1,p]$.

\begin{definition}[subdivision arcs of $G$]\label{subdivision}
For any edge $e(x,y)$ of $G$ with $x\not=y$, a non-trivial arc $\delta(z_i,z_{i+1})\subset e(x,y)$ described above is called a subdivision arc of $G$.
\end{definition}

 For a subdivision arc of $G$, we see from the action of $L$ that it is mapped either  monotonously onto an edge of $G$ or onto a vertex of $G$.

%\item Let two distinct vertices $x,y\in V_{G}$ be non-separated by $\Theta$, then $L$ maps $e(x,y)$ onto $e(\tau(x),\tau(y))$.
%Since $x,y$ are non-separated by $\Theta(f)$, the regulated arc $[\Phi(x),\Phi(y)]\subset T$ (possibly trivial) belongs to a same region $\phi(V_i)$. It means $g([\Phi(x),\Phi(y)])=[g(\Phi(x)),g(\Phi(y))]$, and hence
%\begin{eqnarray*}
 % g\circ\Phi(e(x,y))  &=& g([\Phi(x),\Phi(y)])=[g(\Phi(x)),g(\Phi(y))]=[\Phi(\tau(x)),\Phi(\tau(y))]\\
%&=& \Phi(e(\tau(x),\tau(y)))=\Phi\circ L(e(x,y))
%\end{eqnarray*}
%So we can redefine $L$ on the edge $e(x,y)$ by lifting $ g_{}$ such that  $g\circ\Phi=\Phi\circ L$ on  $e(x,y)$.

\begin{lemma}\label{commutative2}
We can modify the projection $\Phi$ on each subdivision arc $\delta$ of $G$ such that $\Phi|_\delta$ is either injective or a constant map, and the following equation holds:
$$g\circ \Phi(\delta)=\Phi \circ L(\delta).$$
\end{lemma}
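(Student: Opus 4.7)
The plan is to redefine $\Phi$ on each subdivision arc $\delta = \delta(z_i,z_{i+1})$ inside an edge $e(x,y)$ so that its image is precisely the regulated arc in $T$ joining two canonically prescribed points. The key observation is that, although the subdivision involves a non-canonical choice of representatives $\theta_j \in \Theta_{k_j}$, the relevant landing data does not depend on these choices: by Proposition \ref{define-portrait}.(1) each $\tau(\Theta_{k_j})$ is a singleton, and from \eqref{eq:critical} every ray $\RRR_g(\eta)$ with $\eta\in\Theta_{k_j}$ lands at the common critical point $c_{\Theta_{k_j}}$ of $g$, so $\g_g(\theta_j)=c_{\Theta_{k_j}}$ depends only on $k_j$. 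Setting $\theta_0:=x$, $\theta_{p+1}:=y$, the points $p_j:=\g_g(\theta_j)$ are therefore unambiguously defined for $j=0,\ldots,p+1$, and $p_0=\Phi(x)$, $p_{p+1}=\Phi(y)$ by the original definition of $\Phi$ on vertices.

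For each subdivision arc $\delta(z_i,z_{i+1})$, I would redefine $\Phi|_\delta$ to be a monotone map from $\delta$ onto the regulated arc $[p_i,p_{i+1}]_T$ sending $z_i\mapsto p_i$ and $z_{i+1}\mapsto p_{i+1}$. If $p_i=p_{i+1}$ the target collapses to a point and $\Phi|_\delta$ becomes constant; otherwise the target is an honest arc and the monotone surjection can be realised as a homeomorphism, so $\Phi|_\delta$ is injective. The redefinitions on consecutive subdivision arcs agree at a shared interior vertex $z_i$ because both sides prescribe the value $p_i$, and at $z_0,z_{p+1}$ they match the original values $\Phi(x),\Phi(y)$. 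Moreover, by Lemma \ref{injective-g}.(1) the arc $[\Phi(x),\Phi(y)]_T$ passes through each $c_{\Theta_{k_j}}$, and since the leaf $\ov{xy}$ crosses $hull(\Theta_{k_1}),\ldots,hull(\Theta_{k_p})$ in that order, these critical points occur along $[\Phi(x),\Phi(y)]_T$ in the same order. Hence the concatenation $\bigcup_{i=0}^p [p_i,p_{i+1}]_T$ is precisely the original monotone image $[\Phi(x),\Phi(y)]_T$, and the new $\Phi$ is still a valid projection on $G$.

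It remains to verify the commutative relation on $\delta$. Corollary \ref{non-separated} says that $\theta_i,\theta_{i+1}$ are non-separated by $\Th$, so Lemma \ref{injective-g}.(2) gives
\[
g\circ\Phi(\delta)\;=\;g\bigl([p_i,p_{i+1}]_T\bigr)\;=\;[\g_g(\tau(\theta_i)),\g_g(\tau(\theta_{i+1}))]_T.
\]
On the other side, $L$ sends $\delta$ monotonously onto $e(\tau(\theta_i),\tau(\theta_{i+1}))$ (a single vertex if the two endpoints coincide), and by the same construction $\Phi$ maps this edge onto the same regulated arc $[\g_g(\tau(\theta_i)),\g_g(\tau(\theta_{i+1}))]_T$, giving $\Phi\circ L(\delta)$ equal to $g\circ\Phi(\delta)$. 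The main (and essentially only) obstacle is the well-definedness and gluing issue: one must be sure that the various possible choices of $\theta_j$ produce the same target and that the pieces concatenate into a coherent extension on each full edge $e(x,y)$. Both are resolved by the collapsing property of $\phi$ (angles in $\Theta_{k_j}$ share the landing point $c_{\Theta_{k_j}}$), by Proposition \ref{define-portrait}.(1) (uniqueness of $\tau(\theta_j)$), and by Lemma \ref{injective-g}.(1) (the critical points $c_{\Theta_{k_j}}$ lie on $[\Phi(x),\Phi(y)]_T$ in the prescribed order).
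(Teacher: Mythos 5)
Your proposal is correct and follows essentially the same route as the paper's proof: redefine $\Phi$ on each subdivision arc $\delta(z_i,z_{i+1})$ to be a homeomorphism (or constant map) onto $[\g_g(\theta_i),\g_g(\theta_{i+1})]_T$, using Lemma \ref{injective-g}.(1) to locate the critical points $c_{\Theta_{k_j}}$ on $[\Phi(x),\Phi(y)]_T$, and then Corollary \ref{non-separated} together with Lemma \ref{injective-g}.(2) to get $g([\g_g(\theta_i),\g_g(\theta_{i+1})]_T)=[\g_g(\tau(\theta_i)),\g_g(\tau(\theta_{i+1}))]_T=\Phi\circ L(\delta)$. Your explicit treatment of the independence from the choice of $\theta_j\in\Theta_{k_j}$ and of the gluing at shared subdivision points is a welcome addition but does not change the argument.
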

\begin{proof}
Let $e(x,y)$ be any edge of $G$, and the ordered pair $x,y$ have the separation set $(k_1,\ldots,k_p)$ with $p\geq0$.  Then $e(x,y)$ contains $p+1$ subdivision arcs $\delta(z_i,z_{i+1}), i=0,\ldots p,$ with $z_0:=x$ and $z_{p+1}:=y$. We set $\theta_0:=x,\theta_{p+1}:=y$, and pick an angle $\theta_i$ in each $\Theta_{k_i}\in \Th$.

By Lemma \ref{injective-g}.(1), the arc $[\Phi(x),\Phi(y)]_T=[\g_g(x),\g_g(y)]_T$
(possibly reduced to a point) successively passes through the points
\[c_{\Theta_{k_i}}=\phi(\RRR_f(\Theta_{k_i})\cap K_f)=\g_g(\theta_i)=\Phi(\theta_i),\quad i=1,\ldots,p.\]
  It follows that $[\Phi(x),\Phi(y)]_T$ also contains $p+1$ successive subdivision sets $[\Phi(\theta_i),\Phi(\theta_{i+1})]_T, i=0,\ldots p,$ where each of them is either an arc or a point.

  We now modify $\Phi:e(x,y)\to [\Phi(x),\Phi(y)] $ on each subdivision arc of $G$ in $e(x,y)$ such that \begin{enumerate}
\item $\Phi(\delta(z_i,z_{i+1}))=[\Phi(\theta_i),\Phi(\theta_{i+1})]$ with $\Phi(z_i)=\Phi(\theta_i)$ and $\Phi(z_{i+1})=\Phi(\theta_{i+1})$, for each $i=0,\ldots,p$.
\item If $\Phi(\theta_i)\not=\Phi(\theta_{i+1})$, we require that $\Phi:\delta(z_i,z_{i+1})\to [\Phi(\theta_i),\Phi(\theta_{i+1})]$ is a homeomorphism.
\end{enumerate}
   %to the point $\Phi(\theta_i)$ if $\Phi(\theta_i)=\Phi(\theta_{i+1})$, and  maps $\delta(z_i,z_{i+1})$ homeomorphically onto $[\Phi(\theta_i),\Phi(\theta_{i+1})]_T$, otherwise.
   By Corollary \ref{non-separated} and Lemma \ref{injective-g}.(2), we get that $$g[\Phi(\theta_i),\Phi(\theta_{i+1})]_T=g[\g_g(\theta_i),\g_g(\theta_{i+1})]_T=[\g_g(\tau(\theta_i)),\g_g(\tau(\theta_{i+1}))]_T=
[\Phi(\tau(\theta_i)),\Phi(\tau(\theta_{i+1}))]_T$$
Therefore, after this modification, we have
\begin{eqnarray*}
g\circ\Phi(\delta(z_i,z_{i+1}))&=&g[\Phi(\theta_i),\Phi(\theta_{i+1})]_T=[\Phi(\tau(\theta_i)),\Phi(\tau(\theta_{i+1}))]_T\\
&=&\Phi\big(e(\tau(\theta_i),\tau(\theta_{i+1}))\big)=\Phi\circ L(\delta(z_i,z_{i+1})),
\end{eqnarray*}
which completes the proof.
\end{proof}

\subsection{The construction of a projection $\Psi:\G\to T$}\label{7}

{ To solve the problem that $\Phi:G\to T$ is generally not finite to one, we will in this part construct a quotient graph $\G$ of $G$ and a finite to one map $\Psi:\G\to T$.}
%After the modification of $\Phi$ following Lemma \ref{commutative2}, the restriction of $\Phi$ on each subdivision arc is either injective or a constant map.
\begin{definition}[collapsing subdivision arc]
A subdivision arc of $G$ is called a \emph{collapsing subdivision arc} if it is mapped by $\Phi$  to a point.
\end{definition}
Intuitively, by collapsing each collapsing subdivision arc to one point, we obtain a quotient graph $\G$, and the projection $\Phi:G\to T$ descends to a projection $\Psi:\G\to T$, which is injective on each edge of $\G$. We explain these facts in the following.

%Recall that $\Delta_G$ denotes the set of all subdivision arcs of $G$. We denote by $\Delta^{\text{col}}_G$ the set of all collapsing subdivision arcs of $G$.

We define a relation $\simeq$ on $G$ such that $p\simeq q$ if and only if either $p=q$ or $p$ and $q$ are contained in a path constituted by collapsing subdivision arcs of $G$. This relation is obviously an equivalence relation, so we denote by $\Gamma:=G/_\simeq$ the quotient space and by $\wp:G\to \G$ the quotient map.

\begin{proposition}\label{quotient-graph}
The topological space $\G$ is a topological graph, and the Markov map $L:G\to G$ descends to a Markov map $Q:\G\to \G$ by $\wp$.
 \begin{equation}\label{commutative4}\begin{array}{rcl}G &\xrightarrow[]{\ L\ }  &G \\ \wp\Big\downarrow &&\Big\downarrow \wp  \\ \G &  \xrightarrow[]{\ Q\ } & \G\vspace{-0.1cm}.\end{array}
 \end{equation}
\end{proposition}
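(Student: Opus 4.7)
The plan is to establish the proposition in three stages: first, analyze the structure of the $\simeq$-equivalence classes to show that $\G$ is a topological graph; second, prove that the relation $\simeq$ is $L$-invariant so that $L$ descends to a continuous map $Q$; and third, verify that $Q$ satisfies the Markov condition of Definition~\ref{def:markov}.

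For the first stage, I would refine the decomposition of $G$ by declaring every subdivision point (from Definition~\ref{subdivision}) to be a vertex; the resulting finite $1$-complex $G'$ has the subdivision arcs as its edges. Among these edges, the collapsing subdivision arcs form a finite closed sub-$1$-complex whose connected components are exactly the non-trivial $\simeq$-classes; any other $\simeq$-class is a singleton. Since we are contracting a finite union of connected, compact subgraphs of $G'$ to points, a direct local check shows that $\G = G/_\simeq$ inherits the structure of a finite topological graph, whose vertex set is the $\wp$-image of the vertex set of $G'$ and whose edges are the $\wp$-images of the non-collapsing subdivision arcs.

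For the second stage, the key claim is that if $\delta = \delta(z_i,z_{i+1})$ is a collapsing subdivision arc and $L(\delta)$ is not a single vertex of $G$, then every subdivision arc contained in $L(\delta)$ is again collapsing. Indeed, $\delta$ being collapsing means $\g_g(\theta_i) = \g_g(\theta_{i+1})$, where $\theta_i,\theta_{i+1}$ are the angles attached to $z_i,z_{i+1}$ in the construction of $L$. Applying $g$ and using $g\circ \Phi = \Phi\circ \tau$ from Lemma~\ref{property-phi} yields $\g_g(\tau(\theta_i)) = \g_g(\tau(\theta_{i+1}))$, so the $\Phi$-image of the edge $L(\delta) = e(\tau(\theta_i),\tau(\theta_{i+1}))$ degenerates to a point, forcing each subdivision arc inside it to be collapsing by Lemma~\ref{commutative2}. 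Iterating this observation along a chain of collapsing arcs shows that $p \simeq q$ implies $L(p) \simeq L(q)$, so $\simeq$ is $L$-invariant. The universal property of the quotient topology then produces a continuous map $Q:\G \to \G$ fitting into the commutative diagram \eqref{commutative4}.

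For the third stage, take as the distinguished finite set $A$ in Definition~\ref{def:markov} the image under $\wp$ of the vertex set of the refined graph $G'$. Each component of $\G \setminus A$ is the $\wp$-image of a single non-collapsing subdivision arc $\delta$ of $G'$. On such a $\delta$, the map $L$ acts either as a constant map onto a vertex of $G$ or as a homeomorphism onto an edge $e$ of $G$; composing with $\wp$ (which is monotone, since each $\simeq$-class is connected) we conclude that $Q$ is monotone on each component of $\G \setminus A$ and sends it either to a vertex or onto a union of edges of $\G$. Hence $Q$ is Markov. The main obstacle I anticipate is the bookkeeping in the first stage, namely verifying that after contracting the $\simeq$-classes the local structure near each identified vertex of $\G$ is still that of finitely many arcs meeting at a point; once this is in place, the remaining stages reduce to unpacking the definition of collapsing subdivision arc via Lemma~\ref{property-phi} and using that a composition of monotone maps is monotone.
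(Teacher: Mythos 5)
Your proposal is correct and follows essentially the same route as the paper: the $L$-invariance of $\simeq$ comes from the relation $\Phi\circ L=g\circ\Phi$ on subdivision arcs (Lemma \ref{commutative2}, together with $g(\g_g(\theta))=\g_g(\tau(\theta))$ from Proposition \ref{property-g}), and the Markov property of $Q$ comes from monotonicity on the $\wp$-images of non-collapsing subdivision arcs. The only divergence is cosmetic: you take the subdivision points as vertices of $\G$, whereas the paper keeps $V_\G=\wp(V_G)$ and $E_\G=\{\wp(\wt e)\mid \wt e\in E_G\setminus E_G^{\mathrm{col}}\}$ — a choice worth retaining, since the later identification of the incidence matrix of $(\G,Q)$ with the block $C$ of $D_{(G,L)}$ in Proposition \ref{equal-2} uses that the edges of $\G$ correspond to the non-collapsed edges of $G$.
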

\begin{proof}
We define $V_{\G}:=\wp(V_G)$ the vertex set of $\G$ and
$E_{\G}:=\{\wp(\wt{e})\mid \wt{e}\in E_G\setminus E_G^{\text{col}}\}$
the edge set of $\G$, where $E_G^{\text{col}}$ denotes the set of the edges of $G$ which are constituted by collapsing subdivision arcs. It is not difficult to check that the topological space $\G$ with vertex set $V_{\G}$ and edge set $E_{\G}$ satisfies the properties of being a topological graph (see Section \ref{entropy-result}).

Let $\wt{\delta}$ be any collapsing subdivision arc of $G$. By Lemma \ref{commutative2}, we have that  $\Phi\big(L(\wt{\delta})\big)=g\big(\Phi(\wt{\delta})\big)$ is a singleton. Then $L(\wt{\delta})$ is either a point or the union of some collapsing subdivision arcs. It means that the equivalence relation $\simeq$ is $L$-invariant. The map $L:G\to G$ hence descends to a continuous self map on $\G$, which is denoted by $Q$.

Clearly, the set of arcs
\begin{equation}\label{eq:subdivision-arc}
\{\text{$\wp(\wt{\delta})$: $\wt{\delta}$ is a subdivision arc, but not a collapsing subdivision arc of $G$}\}
\end{equation}
 forms a system of subdivision arcs of $\G$. By the formula $\wp\circ L=Q\circ \wp$ on $G$, the restriction of $Q$ on such a subdivision arc is either monotone onto an edge of $\G$, or a constant map.
 Hence $Q:\G\to \G$ is a Markov map.
\end{proof}

\begin{proposition}\label{semi-conjugacy}
 There exists a surjective and finite to one map $\Psi:\G\to T$ such that $\Psi\circ \wp=\Phi$ pointwise on $G$. %(see the following commutative graph).
\begin{equation}\label{commutative-3}
\xymatrix@R=0.5cm{
  G \ar[dd]_{\Phi} \ar[dr]^{\wp}             \\
                & \G \ar[dl]_{\Psi}         \\
  T               }
\end{equation}
\end{proposition}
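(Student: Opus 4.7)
The plan is to build $\Psi$ using the universal property of the quotient map $\wp$, and then to extract the two desired properties (surjectivity and the finite-to-one property) from the structure of $\Phi$ laid down in Lemma~\ref{commutative2}.

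First I would verify the compatibility condition $\Phi(p)=\Phi(q)$ whenever $p\simeq q$. By the definition of $\simeq$, it suffices to treat the case where $p$ and $q$ lie in a common collapsing subdivision arc $\wt\delta$ of $G$; but by construction $\Phi(\wt\delta)$ is a single point, so the condition holds. Hence $\Phi$ factors through the quotient, producing a continuous map $\Psi:\G\to T$ with $\Psi\circ\wp=\Phi$. Surjectivity is then immediate: by Proposition~\ref{surjective}, $\Phi(G)=T$, and since $\wp$ is surjective onto $\G$, we get $\Psi(\G)=\Psi(\wp(G))=\Phi(G)=T$.

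For the finite-to-one property, I would use the list of subdivision arcs of $\G$ described in \eqref{eq:subdivision-arc}, namely the images $\wp(\wt\delta)$ for $\wt\delta$ a \emph{non-collapsing} subdivision arc of $G$. There are only finitely many such arcs (since $G$ has finitely many edges, each subdivided into finitely many pieces). Given any non-collapsing $\wt\delta=\delta(z_i,z_{i+1})$, Lemma~\ref{commutative2} arranges that $\Phi$ restricts to a homeomorphism from $\wt\delta$ onto the arc $[\Phi(\theta_i),\Phi(\theta_{i+1})]_T\subset T$; consequently $\Psi$ restricts to a homeomorphism from the corresponding subdivision arc $\wp(\wt\delta)$ of $\G$ onto this same arc of $T$. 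In particular, $\Psi$ is injective on each subdivision arc of $\G$, so for any $t\in T$ the preimage $\Psi^{-1}(t)$ contains at most one point per subdivision arc of $\G$. Thus $\sup_{t\in T}\#\Psi^{-1}(t)$ is bounded by the (finite) number of subdivision arcs of $\G$, which gives the finite-to-one property.

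The only subtle point that needs care is the accounting at the vertices of $\G$: when several non-collapsing subdivision arcs of $G$ share an endpoint that is identified by $\wp$, and these endpoints all have the same $\Phi$-image in $V_T$, the fiber at that vertex of $T$ contains contributions from several subdivision arcs of $\G$ meeting there. But since $V_\G$ is finite and each vertex is shared by only finitely many subdivision arcs (as $\G$ is a finite topological graph by Proposition~\ref{quotient-graph}), the uniform finite bound survives; this is the step where one must be attentive, but it does not alter the conclusion.
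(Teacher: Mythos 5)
Your proposal is correct and follows essentially the same route as the paper: $\Psi$ is obtained by factoring $\Phi$ through $\wp$ (well-defined because $\Phi$ is constant on collapsing subdivision arcs), surjectivity comes from Proposition~\ref{surjective}, and finite-to-one-ness comes from injectivity of $\Psi$ on the finitely many pieces of $\G$ supplied by the modification in Lemma~\ref{commutative2}. The only cosmetic difference is that you check injectivity subdivision-arc by subdivision-arc while the paper checks it edge by edge; both yield the same uniform bound on fibers.
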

\begin{proof}
 For any $p\in \G$, we define $\Psi(p)=\Phi\circ\wp^{-1}(p)$. Note that $\wp^{-1}(p)$ is either a point or a connected set consisting of collapsing subdivision arcs. Since $\Phi$ maps each collapsing subdivision arc to a point,  then $\Phi\big(\wp^{-1}(p)\big)$ is always a singleton. It means that $\Psi$ is well defined. The surjection of $\Psi$ follows directly from the surjectivity of $\Phi$ given in Proposition \ref{surjective}.

%Let $p$ be a point of $T$. Since $\Phi:G\to T$ is surjective (Proposition \ref{surjective}), then there is a point $a\in G$ with $\Phi(a)=p$. By the commutative graph (\ref{commutative-3}), the point $\wp(a)$ is a preimage of $p$ by $\Psi$. Hence $\Psi$ is surjective.

To prove that $\Psi$ is finite to one, it is enough to show that its restriction on each edge of $\G$ is injective. Let $e\in E_\G,\ p,q\in e$ and $\Psi(p)=\Psi(q)$. By the definition of $E_\G$, we can pick an edge $\widetilde{e}$ of $G$ in $E_G\setminus E_G^{\text{col}}$ such that $\wp(\wt{e})=e$. Denote by $\tilde{p}$ and $\tilde{q}$,
 preimages on $\wt{e}$ by $\wp$ of $p$ and $q$, respectively.
 We then have $\Phi(\tilde{p})=\Phi(\tilde{q})$. According to the modified construction of $\Phi$ (Proposition \ref{commutative2}), it follows that $\tilde{p}$ and $\tilde{q}$ belong to a path constituted by collapsing subdivision arcs of $G$. Hence $p=\wp(\tilde{p})=\wp(\tilde{q})=q$. It means that $\Psi|_e$ is injective.
\end{proof}

\subsection{The entropies $h(\G,Q)$ and $h(T,g)$ are equal}\label{8}

{By Proposition \ref{semi-conjugacy}, the map $\Psi:\G\to T$ is surjective and finite to one. To prove $h(\G,Q)=h(T,g)$, we only need to
redefine $Q$ on each subdivision arcs of $\G$ such that $\Psi$ is a semi-conjugacy from $Q:\G\to\G$ to $g:T\to T$, and then apply Proposition \ref{Do4}.}

\begin{proposition}\label{equal-1}
The topological entropies $h(\G,Q)$ and $h(T,g)$ are equal.
\end{proposition}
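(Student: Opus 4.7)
The strategy is the one sketched in the paragraph preceding the proposition: modify $Q$ on each subdivision arc of $\G$ so that $\Psi$ becomes a genuine (pointwise) semi-conjugacy from the modified map to $g:T\to T$, and then invoke Proposition \ref{Do4} using that $\Psi$ is surjective and finite to one (Proposition \ref{semi-conjugacy}). Since the modification will preserve the incidence matrix of $(\G,Q)$, the topological entropy will be unchanged by Proposition \ref{entropy-formula}, so combining the two observations yields the desired equality.

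First I would record that $\Psi$ restricts to a homeomorphism from each closed edge of $\G$ onto a closed subarc of $T$: injectivity on edges is part of Proposition \ref{semi-conjugacy}, and continuity plus compactness of the edge provide the rest. Next, combining Lemma \ref{commutative2} with the relations $\Psi\circ\wp=\Phi$ and $\wp\circ L=Q\circ\wp$ (diagram \eqref{commutative4}), one obtains the setwise identity
\begin{equation*}
g\circ\Psi(\delta)=\Psi\circ Q(\delta)
\end{equation*}
for every subdivision arc $\delta$ of $\G$, where, by \eqref{eq:subdivision-arc}, subdivision arcs of $\G$ are exactly the images under $\wp$ of the non-collapsing subdivision arcs of $G$.

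Now I would redefine $Q$ on each such $\delta$ as follows. If $Q(\delta)$ is an edge $e$ of $\G$, both $\Psi|_\delta$ and $\Psi|_e$ are homeomorphisms onto subarcs of $T$; set $Q'|_\delta:=(\Psi|_e)^{-1}\circ g\circ \Psi|_\delta$, which is well defined by the setwise identity above. If $Q(\delta)$ is a single vertex, leave $Q'|_\delta=Q|_\delta$ unchanged. The endpoints of $\delta$ are vertices of $\G$ whose images are already in ${\rm post}(g)\subset V_T$, so the old and new values of $Q$ agree there and the pieces glue together to a continuous Markov map $Q':\G\to\G$. Because each subdivision arc of $\G$ still covers monotonously the same edge exactly once under $Q'$, the incidence matrix of $(\G,Q')$ coincides with that of $(\G,Q)$, hence $h(\G,Q')=h(\G,Q)$ by Proposition \ref{entropy-formula}.

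By construction $\Psi\circ Q'=g\circ\Psi$ holds pointwise on $\G$, so $\Psi$ is a surjective semi-conjugacy with uniformly bounded fibers by Proposition \ref{semi-conjugacy}, and Proposition \ref{Do4} gives $h(\G,Q')=h(T,g)$. Together with the equality of the previous paragraph, this yields $h(\G,Q)=h(T,g)$. The only delicate point is verifying that the piecewise redefinition produces a continuous map and preserves the Markov combinatorics; both follow from the fact that the modification fixes the values at subdivision endpoints and leaves invariant the data of which subdivision arc covers which edge of $\G$, so the genuine obstacle of the paper — constructing $\G$ and $\Psi$ in the first place so that $\Psi$ is finite to one despite the higher-degree pathologies of Example \ref{no-well} — has already been overcome in Sections \ref{6}--\ref{7}.
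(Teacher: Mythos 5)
Your proposal is correct and follows essentially the same route as the paper: establish the setwise identity $g\circ\Psi(\delta)=\Psi\circ Q(\delta)$ on each subdivision arc of $\G$ from Lemma \ref{commutative2} and the diagrams \eqref{commutative4} and \eqref{commutative-3}, redefine $Q$ on each such arc by lifting $g$ through the homeomorphisms $\Psi|_\delta$ and $\Psi|_{Q(\delta)}$, observe via Proposition \ref{entropy-formula} that the entropy is unchanged, and conclude with Proposition \ref{Do4}. Your extra remarks on gluing at subdivision endpoints and preservation of the Markov combinatorics are details the paper leaves implicit, but they do not change the argument.
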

\begin{proof}
 In \eqref{eq:subdivision-arc} we obtain a system of subdivision arcs for $\G$. Let $\delta=\wp(\wt{\delta})$ be such one. By the definition of the relation $\simeq$, the map $\wp:\wt{\delta}\to\delta$ is a homeomorphism.
By Proposition \ref{commutative2}, and the commutative graphs (\ref{commutative4}) and (\ref{commutative-3}), we have
$$g\circ\Psi(\delta)=g\circ\Phi(\wt{\delta})=\Phi\circ L(\wt{\delta})=\Psi\circ Q(\delta)$$
If $Q(\delta)$ is a singleton, the formula $g\circ\Psi=\Psi\circ Q$ naturally holds pointwise on $\delta$. Otherwise, $Q(\delta)$ is an edge of $\G$, and the maps $\Psi|_{\delta}$ and $\Psi|_{Q(\delta)}$ are both homeomorphisms onto their images. We then redefine $Q$ on $\delta$  by lifting $g:\Psi(\delta)\to \Psi(Q(\delta))$ along these two homeomorphisms, i.e. by setting   $ Q:=\Psi^{-1}\circ g\circ\Psi$ on $\delta$.
After such  modification of $Q$ on each subdivision arc of $\G$, we obtain the formula $g\circ\Phi=\Phi\circ Q$ pointwise on $\G$. By Proposition \ref{entropy-formula}, the topological entropy $h(\G,Q)$ is independent on the
precise choices of $Q$ as monotonous maps on the subdivision arcs of $\G$. Then using Proposition \ref{Do4}, we have $h(\G,Q)=h(T,g)$.
\end{proof}

 \subsection{The entropies $h(G,L)$ and $h(\G,Q)$ are equal}\label{9}

 To complete the proof of Theorem \ref{Thurston-algorithm}, it remains to show that $h(\G,Q)=h(G,L)$.
The idea of its proof  is similar to that of Proposition \ref{equal}.
Recall that $E_G^{\text{col}}=\{e\in E_G\mid \Phi(e)\text{ is a singleton}\}$. %We have the following result.

\begin{proposition}\label{collapsing-edge}
For each $e\in E_G^{\rm col}$,  $L(e)$ is either one point or the union of edges in $E_G^{\rm col}$.
\end{proposition}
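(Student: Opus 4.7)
The plan is to leverage the semicommutation $\Phi\circ L=g\circ\Phi$ on subdivision arcs, established in Lemma \ref{commutative2}, together with the definition $E_G^{\rm col}=\{e\in E_G\mid \Phi(e)\text{ is a singleton}\}$ and the fact that $L$ sends each subdivision arc either to a vertex of $G$ or monotonically onto an edge of $G$.

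First, I would fix $e=e(x,y)\in E_G^{\rm col}$ and consider its decomposition into subdivision arcs $\delta(z_0,z_1),\ldots,\delta(z_p,z_{p+1})$ determined by the separation set of the ordered pair $(x,y)$. Since $\Phi(e)$ is a singleton and each $\delta(z_i,z_{i+1})\subset e$, the image $\Phi(\delta(z_i,z_{i+1}))$ is also a singleton for every $i$.

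Second, by Lemma \ref{commutative2}, for every subdivision arc $\delta(z_i,z_{i+1})$ we have
\[
\Phi\bigl(L(\delta(z_i,z_{i+1}))\bigr)=g\bigl(\Phi(\delta(z_i,z_{i+1}))\bigr),
\]
and the right-hand side is a singleton. By the definition of $L$, the set $L(\delta(z_i,z_{i+1}))$ is either a single vertex of $G$ or an edge $e_i'=e(\tau(\theta_i),\tau(\theta_{i+1}))$ of $G$. In the latter case, $\Phi(e_i')$ is the singleton above, so $e_i'\in E_G^{\rm col}$ by definition.

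Finally, since $e$ is connected and $L$ is continuous, $L(e)=\bigcup_{i=0}^{p}L(\delta(z_i,z_{i+1}))$ is a connected subset of $G$. If every $L(\delta(z_i,z_{i+1}))$ is a vertex of $G$, connectedness forces all these vertices to coincide (adjacent subdivision arcs share an endpoint), so $L(e)$ is a single point. Otherwise, $L(e)$ contains at least one edge $e_i'\in E_G^{\rm col}$; by connectedness, any remaining vertex-image is an endpoint of an adjacent edge, so $L(e)$ is exactly the union of those edges $e_i'$, all lying in $E_G^{\rm col}$. There is no serious obstacle here; the only point requiring care is the bookkeeping between subdivision arcs that collapse to vertices and those that map onto edges, which is handled cleanly by the connectedness argument.
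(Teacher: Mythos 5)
Your proposal is correct and follows essentially the same route as the paper: decompose $e$ into subdivision arcs, apply Lemma \ref{commutative2} to see that $\Phi(L(\delta))=g(\Phi(\delta))$ is a singleton, and conclude that each $L(\delta)$ is either a point or an edge in $E_G^{\rm col}$. The concluding connectedness bookkeeping you add is harmless but not needed beyond what the paper states.
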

\begin{proof}
Let $e\in E_G^{\text{col}}$ and let $\delta\subset e$ be a subdivision arc of $G$. Then $\Phi(\delta)$ is a singleton. By Lemma \ref{commutative2}, we have that $\Phi\big(L(\delta)\big)=g\big(\Phi(\delta)\big)$ is a singleton. It follows that $L(\delta)$ is either a point or an edge of $G$ belonging to $E_G^{\text{col}}$.
\end{proof}

With this proposition, if we arrange the edges of $G$ in the order $E_G^{\text{col}}, E_G\setminus E_G^{\text{col}}$, the incidence matrix of $(G,L)$ takes the form
\[D_{(G,L)}=\left(\begin{array}{cc}
X&\star\\
\mathbf{0}&C
\end{array}\right)
\]
where $\mathbf{0}$ is a zero matrix.
Note that the matrix $C$  is exactly the incidence matrix of $(\G,Q)$, so it is enough to prove $\rho(X)=1$.

From the modification of $\Phi$ in Lemma \ref{commutative2}, we know that an edge $e(x,y)\in E_G^{\text{col}}$ if and only if $\Phi(x)=\Phi(y)$, or equivalently, $\g_g(x)=\g_g(y)$. And by the definition $\phi$, this happens if and only if $\g_f(x)$ and $\g_f(y)$ are contained in an essential $\sim$-class (see Definition \ref{essential}).

From now on and to the end, all argument take place in the $f$-plane, so we omit the subscript $f$ in all quantities for simplicity.

Let $K$ be a connected subset of $K_f$. An edge $e(x,y)$ of $G$ is called  \emph{corresponding} to $K$ if $\g(x),\g(y)\in K$. With this concept, any edge in $E_G^{\text{col}}$ corresponds to an essential $\sim$-class.

\begin{lemma}\label{correspond}
Let $e=e(x,y)\in E_G^{\text{col}}$ correspond to a connected subset $K$ of an essential $\sim$-class. Then all edges of $G$ contained in $L(e)$ correspond to $f(K)$.
\end{lemma}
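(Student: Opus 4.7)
The plan is to describe the edges of $G$ inside $L(e)$ explicitly and, for each endpoint of such an edge, exhibit a witness point in $K$ whose image under $f$ is that endpoint. If the separation set of $(x,y)$ is $(k_1,\ldots,k_p)$, the construction of $L$ in Section~\ref{T-A} shows that the edges of $G$ inside $L(e)$ are exactly $e_j:=e\bigl(\tau(\theta_j),\tau(\theta_{j+1})\bigr)$ for $j=0,\ldots,p$, with $\theta_0=x$, $\theta_{p+1}=y$, and $\theta_j\in\Theta_{k_j}$ for $1\le j\le p$. Proposition~\ref{define-portrait} makes $\tau(\Theta_{k_j})$ a singleton, so $\g_f(\tau(\theta_j))$ does not depend on the choice of $\theta_j\in\Theta_{k_j}$. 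The extremal endpoints satisfy $\g_f(\tau(x))=f(\g_f(x))\in f(K)$ and $\g_f(\tau(y))=f(\g_f(y))\in f(K)$, so the task reduces to producing, for each intermediate $1\le j\le p$, some $\theta_j\in\Theta_{k_j}$ with $\g_f(\theta_j)\in K$.

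To locate such a witness I plan to use a Jordan-curve separation argument. Choose an arc $\eta\subset K$ joining $\g_f(x)$ to $\g_f(y)$, available because $K$ is connected. The set $C:=\RRR_f(x)\cup\eta\cup\RRR_f(y)\cup\{\infty\}$ is a Jordan curve in $\wh{\C}$ with two complementary regions. Since $(x,y)$ is separated by $\Theta_{k_j}$, the connected set $\RRR_f(\Theta_{k_j})$ has points in both complementary regions of $C$ and must meet $C$; disjointness of external rays at distinct angles confines this intersection to $\eta\subset K$, and in fact to the $K_f$-part of $\RRR_f(\Theta_{k_j})$. When $\Theta_{k_j}\in\mathbf{\Theta}_\JJJ$ this $K_f$-part is a single critical point $c_{k_j}\in K$, and any $\theta\in\Theta_{k_j}$ works. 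When $\Theta_{k_j}=\Theta(U)\in\mathbf{\Theta}_\FFF$ this $K_f$-part is the star $\bigcup_{\theta\in\Theta(U)}r_U(\theta)$ at $c_U$; if $\eta$ meets some arm $r_U(\theta^\star)$ at an interior point, then the absence of branching in that arm forces $\eta$ to traverse the whole arm, hence to contain $\g_f(\theta^\star)$ with $\theta^\star\in\Theta(U)=\Theta_{k_j}$.

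The principal obstacle is the remaining sub-case of the critical Fatou case, where $\eta$ meets the star only at the centre $c_U$. Then $c_U\in K$ but $\eta$ may enter and exit $c_U$ along internal rays whose angles are not in $\Theta(U)$, leaving no immediate planar witness for a $\theta^\star\in\Theta(U)$ with $\g_f(\theta^\star)\in K$. I intend to resolve this by combining Lemma~\ref{commutative2}---which applied to $e\in E_G^{\text{col}}$ already yields $\g_f(\tau(\theta_j))\sim f(\g_f(x))$---with the observation that every critical arm $r_U(\theta^\star)$, $\theta^\star\in\Theta(U)$, is $\sim$-equivalent to $c_U$ and is collapsed by $f$ onto the common image arm $r_{f(U)}(\tau(\theta_j))$, so the presence $c_U\in K$ together with the identity $f(c_U)=c_{f(U)}$ will push the $\sim$-equivalence forward into $f(K)$ and force $\g_f(\tau(\theta_j))\in f(K)$. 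The degenerate case $\g_f(x)=\g_f(y)$, in which $K$ may reduce to a single point of ${\rm crit}(f)\cup{\rm post}(f)$, is handled analogously by reducing to the fact that every separator of such a collapsed pair shares the common landing point.
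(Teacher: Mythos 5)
Your overall strategy coincides with the paper's: reduce the claim to producing, for each $i$, an angle $\theta_i\in\Theta_{k_i}$ with $\g_f(\theta_i)\in K$, and locate this witness by a planar separation argument showing that an arc in $K$ joining $\g_f(x)$ to $\g_f(y)$ must meet $\RRR_f(\Theta_{k_i})\cap K_f$. Your treatment of the Julia case and of the extremal endpoints is fine. But the sub-case you correctly identify as ``the principal obstacle'' is not closed by your proposed patch, and this is a genuine gap. The relation ``$e(\tau(\theta_i),\tau(\theta_{i+1}))$ corresponds to $f(K)$'' means literally that $\g_f(\tau(\theta_i))\in f(K)$. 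From $c_U\in K$ you only get $c_{f(U)}=f(c_U)\in f(K)$; the fact that $c_{f(U)}$ is $\sim$-equivalent to $\g_f(\tau(\theta_i))$ (they are joined by the postcritical internal ray $r_{f(U)}(\tau(\theta_i))$) does not put $\g_f(\tau(\theta_i))$ into the set $f(K)$ itself. For that you would need a point of $K$ mapping onto $\g_f(\tau(\theta_i))$, i.e.\ essentially some $\g_f(\theta^\star)$ with $\theta^\star\in\Theta(U)$ lying in $K$ --- which is exactly what the bad sub-case denies. Likewise, Lemma \ref{commutative2} concerns the maps $\Phi$, $g$, $L$ on the quotient side and gives no information about membership in $f(K)$ upstairs. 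So the argument as written does not prove the lemma in the Fatou sub-case.

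The missing ingredient is the hypothesis that $K$ is a connected subset of an \emph{essential $\sim$-class}, which you never actually use at the critical point of the argument. Since an essential class is a finite regulated tree, the regulated arc $[\g_f(x),\g_f(y)]$ is contained in $K$, and it consists entirely of (pieces of) critical and postcritical internal rays. Hence if this arc meets a critical Fatou component $U$, its intersection with $\ov{U}$ is a union of two internal rays through $c_U$ that are themselves critical/postcritical internal rays; by Lemma \ref{basic-property}.(4) the only such rays in $\ov{U}$ are the arms $r_U(\theta)$ with $\theta\in\Theta(U)$. Therefore the arc necessarily traverses two full arms of the star $\RRR_f(\Theta(U))\cap K_f$ and contains their landing points $\g_f(\theta')$, $\g_f(\theta'')$ with $\theta',\theta''\in\Theta(U)$; and if the arc meets $\ov{U}$ only on $\partial U$, the crossing point is already some $\g_f(\theta)$ with $\theta\in\Theta(U)$. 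In other words, the ``center-only'' scenario you try to patch is vacuous, and this is how the paper's proof proceeds. With that observation your separation argument closes correctly; without it, the proof is incomplete.
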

\begin{proof}
Let the ordered pair $x,y$ have the separation set $(k_1,\ldots,k_p)$ with $p\geq0$, i.e., the leaf $\ov{xy}$ successively crosses $hull(\Theta_{k_1}),\ldots hull(\Theta_{k_p})$ from $x$ to $y$ with $\Theta_{k_1},\ldots,\Theta_{k_p}\in \Th$, and denote the subdivision arcs of $G$ contained in $e(x,y)$ by $\delta_i, i=0,\ldots p,$ with $x\in\delta_0$ and $y\in\delta_{p+1}$.

Since $K$ is allowable connected, then $[\g(x),\g(y)]\subset K$.
In the proof of Lemma \ref{injective-g}, we have shown that $[\g(x),\g(y)]$ intersects $\RRR(\Theta_{k_i})$ for all $i\in\{1,\ldots,p\}$. If $\Theta_{k_i}=\Theta_j(c_j)$ for some $\Theta_j(c_j)\in\Th$, then $[\g(x),\g(y)]\cap \RRR(\Theta_{k_i})=\{c_j\}$ and $\g(\theta_i)=c_j\in [\g(x),\g(y)]$ for any given $\theta_i\in \Theta_{k_i}$. If $\Theta_{k_i}=\Theta(U)$ for a critical Fatou component $U$, the intersection of $[\g(x),\g(y)]$ and $U$ consists of two internal rays of $U$, which are critical/postcritical internal rays (since $[\g(x),\g(y)]$ is in an essential $\sim$-class).
By Lemma \ref{basic-property}.(4), these two internal rays must be contained in $\RRR(\Theta_{k_i})$. We can then pick an angle $\theta_i\in\Theta_{k_i}$ satisfying $\g(\theta_i)\in[\g(x),\g(y)]$. Thus, we obtain a sequence of angles $\theta_1,\ldots,\theta_p$
 such that $\theta_i\in\Theta_{k_i}$ and $\g(\theta_i)\in [\g(x),\g(y)]\subset K$ for each $i\in\{1,\ldots,p\}$.
By setting
 $\theta_0:=x$ and $\theta_{p+1}:=y$, we have
 \begin{equation}\label{eq:2}
\g(\tau(\theta_0)),\ldots\ldots,\g(\tau(\theta_{p+1}))\in f(K).
\end{equation}
According to the definition of $L$, the edges of $G$ contained in $L(e(x,y))$ are the non-trivial arcs among $$L(\delta_0)=e(\tau(\theta_0),\tau(\theta_1)),\ldots\ldots,L(\delta_p)=e(\tau(\theta_p),\tau(\theta_{p+1})),$$
which all correspond to $f(K)$ by \eqref{eq:2}.
\end{proof}

The following lemma is quite similar to Lemma \ref{f-invariant}, both the statement and the role.

\begin{lemma}\label{eventually-period}
All edges in $E_{G}^{\rm col}$ are attracted by cycles, i.e., for each $e\in E_{G}^{\rm col}$, its iterations by $L$ eventually fall on the union of cycles in $E_{G}^{\rm col}$.
\end{lemma}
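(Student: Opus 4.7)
The plan is to adapt the argument of Lemma \ref{f-invariant}, transporting the question from collapsing edges in $G$ to essential $\sim$-equivalence classes in the $f$-plane, where Lemma \ref{star-like} already guarantees eventual periodicity.

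First, for an edge $e = e(x,y) \in E_G^{\rm col}$, the modified construction of $\Phi$ (Lemma \ref{commutative2}) forces $\Phi(x) = \Phi(y)$, i.e.\ $\g_f(x)$ and $\g_f(y)$ lie in a common essential $\sim$-equivalence class $K_0$; hence $e$ corresponds to $K_0$ in the sense of Lemma \ref{correspond}. Iterating that lemma, every edge of $G$ contained in $L^n(e)$ corresponds to $f^n(K_0)$. Using that $\sim$ is $f$-invariant (Section \ref{2}) and that essentiality is preserved by $f$ -- critical or postcritical internal rays are sent to postcritical internal rays by Lemma \ref{basic-property}.(2), and points of ${\rm crit}(f) \cup {\rm post}(f)$ map into ${\rm post}(f)$ -- the set $f^n(K_0)$ is contained in an essential $\sim$-class $\xi_n$. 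In particular, every edge appearing in $L^n(e)$ still belongs to $E_G^{\rm col}$.

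Next, Lemma \ref{star-like} provides a universal $N$ such that for $n > N$ the set $f^n(K_0)$ coincides with $\xi_n$ and is either a periodic point in $J_f$ or a star-like tree of periodic internal rays. The sequence $(\xi_n)_{n > N}$ is therefore periodic under $f$ with some period $k \ge 1$. Let $\mathcal{E} \subset E_G^{\rm col}$ denote the set of edges of $G$ whose endpoints land in $\xi_N \cup \xi_{N+1} \cup \cdots \cup \xi_{N+k-1}$; this set is finite because each $\xi_{N+j}$ meets only finitely many landing points $\g_f(\theta)$ with $\theta \in {\rm post}(\Th)$. By the previous paragraph, every edge of $G$ contained in $L^n(e)$ for $n \ge N$ lies in $\mathcal{E}$.

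Finally, I would form the finite directed graph on $\mathcal{E}$ with an arrow $e_1 \to e_2$ whenever $e_2$ is an edge of $G$ contained in $L(e_1)$; by Lemma \ref{correspond} this relation is compatible with the partition of $\mathcal{E}$ by the classes $\xi_{N+j}$. In any finite digraph every infinite forward path must eventually enter a directed cycle, and such a directed cycle is, by definition, a cycle in $E_G^{\rm col}$. Combining the three steps, after finitely many $L$-iterations every edge of $G$ contained in $L^n(e)$ lies on some cycle in $E_G^{\rm col}$, which is the conclusion of the lemma. The main delicacy is the preservation of essentiality in passing from $K_0$ to $f(K_0)$, guaranteeing that the iteration never escapes $E_G^{\rm col}$; this rests on Definition \ref{essential} together with the dynamical description of critical/postcritical internal rays in Lemma \ref{basic-property}.
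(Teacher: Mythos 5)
Your first two steps track the paper's own proof: you pass from $e \in E_G^{\rm col}$ to an essential $\sim$-class via Lemma \ref{correspond} and invoke Lemma \ref{star-like} to land, after finitely many iterates, on periodic data in the $f$-plane. The gap is in your final step. You build the digraph on $\mathcal{E}$ with an arrow $e_1 \to e_2$ whenever $e_2$ is \emph{contained in} $L(e_1)$, and assert that a directed cycle of this digraph ``is, by definition, a cycle in $E_G^{\rm col}$''. It is not. A cycle in the sense needed here (see the definition preceding Lemma \ref{f-invariant}, which is what the proof of Proposition \ref{equal-2} actually uses) is an orbit $e_0,\ e_1 = L(e_0),\ \ldots,\ e_n = L(e_{n-1}) = e_0$ in which each edge is mapped by $L$ onto exactly one edge, with no subdivision. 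A directed cycle in your containment digraph allows $L(e_1)$ to be a union of several edges, one of which happens to close the loop; in that situation the block $X$ of the incidence matrix $D_{(G,L)}$ would have a recurrent part with a row containing more than one nonzero entry, and its spectral radius could exceed $1$ --- precisely what the lemma is meant to exclude in order to get $\rho(X)=1$. Finiteness of the digraph gives recurrence, not determinism.

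What is missing is the paper's key claim. For $n$ large, $f^n(\xi)\cap J_f$ is a single periodic point $z_n$, so every edge $e(x,y)$ contained in $L^n(e)$ satisfies $\g_f(x)=\g_f(y)=z_n$; one then shows that \emph{any two external angles landing at $z_n$ are non-separated by $\Th$} (if they were separated by some $\Theta$, then $z_n\in\RRR_f(\Theta)$, which by periodicity of $z_n$ forces $\Theta=\Theta(U)$ for a critical Fatou component $U$ with some angle of $\Theta(U)$ landing at $z_n$, contradicting the fact that the corresponding ray supports $U$ at $z_n$). Non-separation means $L(e(x,y))=e(\tau(x),\tau(y))$ is a single edge, so from this time on the action of $L$ on these edges is unbranched; and since the angles landing at the periodic point $z_n$ are themselves periodic under $\tau$, each such edge is genuinely periodic under $L$ and hence lies on a cycle in the required sense. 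Without this non-separation step your argument does not reach the conclusion of the lemma.
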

\begin{proof}
Given any edge $e\in E_G^{\text{col}}$  corresponding to an essential $\sim$-class $\x$. By repeated use of Lemma \ref{correspond},
we get that for all $n\geq1$, each edge of $G$ contained in $L^n(e)$ corresponds to $f^n(\x)$. By Lemma \ref{star-like}, there exists an $n_0>0$ such that for any  $n\geq n_0$, the set $f^n(\x)$ is either a periodic point in $J_f$  or a star-like tree consisting of periodic internal rays. Hence $f^n(\x)\cap J_f$ is a periodic point in $J_f$, denoted by $z_n$. We just need to prove that for any $n\geq n_0$, each edge of $G$ contained in $L^n(e)$ is periodic by the iterations of $L$.

 Let $n\geq n_0$. Denote by $A_n$ the set of external angles associated with $z_n$.
We claim that each pair of angles in $A_n$ are non-separated by $\Th$.
Suppose on the contrary that $x,y\in A_n$ are separated by $\Theta\in\Th$. Then, in the dynamical plane, the rays $\RRR(x),\RRR(y)$ are separated by $\RRR(\Theta)$. It follows that $z_n\in\RRR(\Theta)$. Since $z_n$ is periodic, the set $\Theta$ equals to $\Theta(U)$ for a critical Fatou component $U$, and one of the angles in $\Theta(U)$, say $\theta$, belongs to $A_n$. It contradicts that $\RRR(\theta)$ supports the Fatou component $U$ at $z_n$.

Let $e(x,y)$ be any edge of $G$ contained in $L^n(e)$.
We know from the first paragraph that $\g(x)$ and $\g(y)$ belong to $f^n(\x)\cap J_f=\{z_n\}$. By the claim above, we have $L(e(x,y))=e(\tau(x),\tau(y))$, which is also an edge of $G$ and corresponds to $f^{n+1}(\x)$. Since this argument holds for all sufficiently large $n$, and the angles in $A_n$ are periodic (because $z_n$ is periodic), the edge $e(x,y)$ must also be
 periodic by iterations of $L$.
\end{proof}

\begin{proposition}\label{equal-2}
The topological entropies verify $h(\G,Q)=h(G,L)$.
\end{proposition}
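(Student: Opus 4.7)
The plan is to mirror the proof of Proposition \ref{equal}, which handled the analogous passage from $h(H_f,f)$ to $h(T,g)$. First, using Proposition \ref{collapsing-edge}, I would order the edges of $G$ so that those in $E_G^{\text{col}}$ come first. Since $L$ sends each edge of $E_G^{\text{col}}$ either to a point or to a union of edges of $E_G^{\text{col}}$, the incidence matrix takes the block upper-triangular form
$$D_{(G,L)}=\left(\begin{array}{cc} X & \star \\ \mathbf{0} & C\end{array}\right),$$
where $X$ records the coverings within $E_G^{\text{col}}$ and $C$ records those among the remaining edges. Using the commutative square (\ref{commutative4}) together with the fact that $\wp$ sends the non-collapsing edges of $G$ bijectively onto the edges of $\Gamma$, one identifies $C$ with $D_{(\Gamma,Q)}$. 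Consequently $\rho(D_{(G,L)})=\max(\rho(X),\rho(C))$, and it suffices to prove $\rho(X)=1$.

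To establish $\rho(X)=1$, I would set $Y:=\bigcup\{e : e\in E_G^{\text{col}}\}$; it is closed and satisfies $L(Y)\subset Y$ by Proposition \ref{collapsing-edge}, so $L:Y\to Y$ is a Markov map with incidence matrix $X$ and hence $h(Y,L)=\log\rho(X)$ by Proposition \ref{entropy-formula}. Let $O$ denote the union of cycles in $E_G^{\text{col}}$ supplied by Lemma \ref{eventually-period}; it is closed, $L$-invariant, and every orbit in $Y$ eventually enters $O$, so Proposition \ref{Do3} yields $h(Y,L)=h(O,L)$. I would then group the edges of $O$ by cycles $O_1,\ldots,O_k$ and, arranging the edges of each $O_j$ cyclically, argue that $D_{(O,L)}$ is block-diagonal with each block $M_j$ a $\{0,1\}$-cyclic permutation matrix of spectral radius $1$, exactly as in the closing step of the proof of Proposition \ref{equal}. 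This gives $\rho(X)=\rho(D_{(O,L)})=1$.

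Combining the two blocks then yields $\rho(D_{(G,L)})=\max(1,\rho(C))=\rho(C)$ whenever $C$ is non-nilpotent (in which case $\rho(C)\geq 1$), and when $C$ is nilpotent both $h(G,L)$ and $h(\Gamma,Q)$ vanish. In either case Proposition \ref{entropy-formula} gives $h(G,L)=\log\rho(C)=h(\Gamma,Q)$, completing the argument.

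The hard part, with no direct analogue in the Hubbard-tree proof, is verifying the cyclic permutation structure inside each $O_j$. In the Hubbard-tree setting $f$ acts as a homeomorphism on each edge (Proposition \ref{Hubbard-tree}), which automatically produces single-cover cycles with $\{0,1\}$ entries. In the $G$-setting, however, $L$ first subdivides an edge $e_i\in E_G^{\text{col}}$ into several subdivision arcs and then maps each monotonously onto an edge of $G$, so a priori $L(e_i)$ could cover $e_{i+1}$ with multiplicity greater than one or contribute extra copies of other cyclic edges. One must show that along a cycle $e_0\to\cdots\to e_n=e_0$ of $E_G^{\text{col}}$ the image $L(e_i)$ covers $e_{i+1}$ exactly once and contributes nothing else to the cycle. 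I expect this to follow from the observation that $\Phi$ collapses $e_i$ to a single point $p_i\in T$ with $g(p_i)=p_{i+1}$ (obtained from the identity $g\circ\Phi=\Phi\circ L$ on subdivision arcs supplied by Lemma \ref{commutative2}), so the combinatorics of the subdivision arcs of $e_i$ that remain in $E_G^{\text{col}}$ and feed back into the cycle is rigidly constrained by the local structure of $g$ near $p_i$; spelling out this combinatorial rigidity is the one genuine piece of work beyond the template of Proposition \ref{equal}.
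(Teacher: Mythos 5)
Your proposal matches the paper's proof, which is precisely the block-triangular argument of Proposition \ref{equal} transplanted to $(G,L)$, with Lemma \ref{eventually-period} playing the role of Lemma \ref{f-invariant} and the block $C$ identified with $D_{(\G,Q)}$. The ``hard part'' you flag at the end --- that $L(e_i)$ might subdivide and cover cycle edges with multiplicity greater than one --- is already disposed of inside the proof of Lemma \ref{eventually-period}: once the iterates reach the cycle regime, both endpoints $x,y$ of any edge contained in $L^n(e)$ land at the periodic point $z_n$, whose external angles are shown there to be pairwise non-separated by $\Th$, so $L(e(x,y))$ is the single edge $e(\tau(x),\tau(y))$ covered monotonously (hence exactly once), and each block $M_j$ is genuinely a cyclic $\{0,1\}$ permutation matrix of spectral radius $1$.
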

\begin{proof}
The effect of Lemma \ref{eventually-period} in this proof is the same as that of Lemma \ref{f-invariant} in the proof of Proposition \ref{equal}. Using a similar argument, just replacing $H_f,T$ and $f$ in the proof of Proposition \ref{equal} with $G,\G$ and $L$ respectively, we obtain the equation $h(G,L)=h(\G,Q)$. The details are omitted.
\end{proof}

\begin{proof}[\bf Proof of Theorem \ref{Thurston-algorithm}]
It follows directly from Propositions \ref{equal-0}, \ref{equal}, \ref{equal-1} and \ref{equal-2}.
\end{proof}

\vspace{1cm}

\noindent Yan Gao, \\
Mathemaitcal School  of Sichuan University, Chengdu 610064,
P. R. China. \\
Email: gyan@scu.edu.cn


\begin{thebibliography}{FF}


\bibitem{AKM} R. L. Adler, A. G. Konheim and M. H. McAndrew, \emph{Topological entropy}, Trans. Amer. Math. Soc. 114(1965), 309-319.
\bibitem{AM} L. Alsed\`{a}, M. Misiurewicz, \emph{Semiconjugacy to a map of a constant slope}, 	arXiv:1410.1718.

\bibitem{BFH}  B. Bielefeld, Y.Fisher, J. Hubbard, \emph{The Classification of Critically Preperiodic
Polynomials as Dynamical Systems}; Journal AMS 5(1992), 721-762.

\bibitem{BM} M. Bonk, D.Meyer, \emph{Expanding Thurston maps}, \url{http:users.jyu.fi/~danmeyer/files/draft.pdf}.

\bibitem{Do} A. Douady, \emph{Topological entropy of unimodal maps: monotonicity for quadratic polynomials}, in Real and complex dynamical systems, NATO Adv. Sci. Inst. Ser. C Math. Phys. Sci. 464, 65-87, Kluwer, Dordrecht, ,1995.

\bibitem{DH} A. Douady, J. H. Hubbard, {\it Exploring the Mandelbrot set: The Orsay Notes}, \url{http:www.math.cornell.edu/~hubbard/OrsayEnglish.pdf}.

\bibitem{Fu} H. Furstenberg, \emph{Disjointness in ergodic theory, minimal sets, and a problem in Diophantine approximation}, Math. Systems \                                                                                                                                                                        Theory 1 (1967), 1-49.

\bibitem{DS} D. Dudko, D. Schleicher, \emph{Core entropy of quadratic polynomials}, arXiv:1412.9760v1.

\bibitem{Gao} Gao Yan, \emph{Dynatomic periodic curve and core entropy for polynomials}, Thesis Angers University (2013).

\bibitem{Jung} W. Jung, \emph{Core entropy and biaccessibility of quadratic polynomials}, arXiv:1401.4792.

\bibitem{L} M. Lyubich, \emph{Dynamics of quadratic polynomials, I-II.}
Acta Mathematica, v. 178 (1997), 185-297.

%\bibitem{Li} T. Li \emph{A monotonicity conjecture for the entropy of Hubbard trees}, Ph.D. thesis, SUNY
%Stony Brook (2007)

\bibitem{MT} J. Milnor, W. Thurston, \emph{On iterated maps of the interval, in Dynamical systems (College
Park, MD, 1986每87)}. In: Lecture Notes in Math., vol. 1342, pp. 465每-563. Springer, Berlin
(1988)

\bibitem{MS} M. Misiurewicz, W. Szlenk. \emph{Entropy of piecewise monotone mappings.} Studia Math. 67 (1980), 45每-63.

\bibitem{Moi} E. Moise, \emph{Geometric topology in dimensions 2 and 3}, Graduate Texts in Mathematics, Vol.
47, Springer-Verlag, New York, 1977.

\bibitem{M} R.L. Moore, \emph{Concerning upper semicontinuous collections of compacta}, Trans. Amer.
Math. Soc. 27 (1925), 416-426.

\bibitem{Poi1} A. Poirier, \emph{On Post Critically Finite Polynomials, Part One: Critical Portraits}, arXiv:9305207v1.

\bibitem{Poi2} A. Poirier, \emph{On post Critically Finite Polynomials, Part Two: Hubbard Tree}, arXiv:9307235v1.

\bibitem{Ti0} G. Tiozzo, \emph{Topological entropy of quadratic polynomials and dimension of sections of the
Mandelbrot set}, Adv. Math. 273, 651每-715 (2015)
\bibitem{Ti} G. Tiozzo, \emph{Continuity of core entropy of quadratic polynomials}, to appear in Invent. math.

\bibitem{T} W. Thurston, \emph{Polynomial dynamics from Combinatorics to Topology}, 1--109 in Complex
Dynamics: Families and Friends, ed. Dierk Schleicher, A K Peters, Wellesley, MA, 2009.

\bibitem{TG} W. Thurston, H. Baik, Y. Gao, J. Hubbard, K. Lindsey, L. Tan, D. Thurston, \emph{degree-$d$ invariant lamination}, preprint.

\bibitem{Z} J. S. Zeng, \emph{Criterion for rays landing together}, arXiv:1503.05931.

\end{thebibliography}
\end{document}